\let\counterwithin\relax  
\definecolor{dark-gray}{gray}{0.3}
\definecolor{dkgray}{rgb}{.4,.4,.4}
\definecolor{dkblue}{rgb}{0,0,.5}
\definecolor{medblue}{rgb}{0,0,.75}
\definecolor{rust}{rgb}{0.5,0.1,0.1}
\newtheoremstyle{myThm} 
    {\topsep}                    
    {\topsep}                    
    {\itshape}                   
    {}                           
    {\sffamily\bfseries}                   
    {.}                          
    {.5em}                       
    {}  
\newtheoremstyle{myRem} 
    {\topsep}                    
    {\topsep}                    
    {}                   
    {}                           
    {\sffamily}                   
    {.}                          
    {.5em}                       
    {}  
\newtheoremstyle{myDef} 
    {\topsep}                    
    {\topsep}                    
    {}                   
    {}                           
    {\sffamily\bfseries}                   
    {.}                          
    {.5em}                       
    {}  
\theoremstyle{myThm}
\newtheorem{theorem}{Theorem}[section]
\newtheorem{lemma}[theorem]{Lemma}
\theoremstyle{myRem}
\newtheorem{remark}[theorem]{Remark}
\theoremstyle{myDef}
\let\originalleft\left
\let\originalright\right
\renewcommand{\left}{\mathopen{}\mathclose\bgroup\originalleft}
\renewcommand{\right}{\aftergroup\egroup\originalright}
\renewcommand{\div}{\text{div}}
\renewcommand{\phi}{\varphi}
\newcommand{\eps}{\varepsilon}
\newcommand{\tx}{\tilde{x}}
\providecommand{\mathbbm}{\mathbb} 
\newcommand{\R}{\mathbbm{R}}
\newcommand{\OO}{\mathcal{O}}
\newcommand{\G}{\mathcal{G}}
\newcommand{\F}{\mathcal{F}}
\renewcommand{\L}{\mathcal{L}}
\definecolor{mygreen}{rgb}{0.1,0.75,0.2}
\newcommand{\sam}{n}
\newcommand{\M}{\mathcal{M}}
\newcommand{\kmin}{\kappa_{\mbox {\tiny{\rm min}}}}
\newcommand{\dtv}{d_{\mbox {\tiny{\rm TV}}}}
\title{Kernel  Methods for Bayesian Elliptic Inverse Problems on Manifolds} 
\author{John Harlim $^{\dagger}$,  Daniel Sanz-Alonso $^{*}$,  Ruiyi Yang $^*$}
\date{}
\makeatletter\@addtoreset{section}{part}\makeatother%
\numberwithin{equation}{section}
\newcommand{\upperRomannumeral}[1]{\uppercase\expandafter{\romannumeral#1}}
\renewcommand{\hat}{\widehat}
\begin{document}
\maketitle 


\abstract{This paper investigates the formulation and implementation of Bayesian inverse problems to learn input parameters of partial differential equations (PDEs) defined on manifolds. Specifically,  we study  the inverse problem of determining the diffusion coefficient of a second-order elliptic PDE on a closed manifold from noisy measurements of the solution. Inspired by manifold learning techniques, we approximate the elliptic differential operator with a kernel-based integral operator that can be discretized via Monte-Carlo without reference to the Riemannian metric. The resulting computational method is mesh-free and easy to implement, and can be applied without full knowledge of the underlying manifold, provided that a point cloud of manifold samples is available. We adopt a Bayesian perspective to the inverse problem, and establish an upper-bound on the total variation distance between the true posterior and an approximate posterior defined with the kernel forward map. Supporting numerical results show the effectiveness of the proposed methodology.
\let\thefootnote\relax\footnote{$^{\dagger}$ The Pennsylvania State University, Department of Mathematics, Department of Meteorology \& Atmospheric Science, Institute for CyberScience.}}
\let\thefootnote\relax\footnote{$^*$ University of Chicago, Department of Statistics.}

\section{Introduction}\label{sec:introduction}

Partial Differential Equations (PDEs) on manifolds are used to model a variety of physical and biological phenomena including pattern formation on biological surfaces, phase separation in bio-membranes, tumor growth, and surfactants on fluid interfaces \cite{eilks2008numerical,elliott2010surface,chaplain2003mathematical,xu2006level}. In this paper we focus on the inversion of \emph{elliptic} PDEs for two main reasons. First, elliptic PDEs are ubiquitous in applications and they are used, for instance, as simplified models for groundwaterflow and oil reservoir simulation. The need to specify uncertain input parameters of these models leads naturally to the inverse problem of determining the permeability from the pressure under a Darcy model of flow in a porous medium \cite{mclaughlin1996reassessment,lorentzen2012history,iglesias2007representer,ping2014history}. Second, elliptic models are widely used to test algorithms for forward propagation of uncertainty  \cite{frauenfelder2005finite,cohen2011analytic,babuska2004galerkin}  and Bayesian inversion \cite{AS10,cotter2013,trillos2016bayesian}. Despite the applied importance of elliptic inversion, 
the manifold setting that we consider is largely unexplored and may allow for more realistic modelling in applications. For example, the variables of interest in the groundwaterflow problem  may not be confined to a \emph{flat} two-dimensional domain and knowledge of the underlying flow surface may be limited to a point cloud of landmark locations.


The aim of this paper is to study the formulation and implementation of Bayesian inverse problems to learn input parameters of PDEs defined on manifolds. Specifically, we study the inverse problem of recovering the diffusion coefficient of a second-order, divergence-form elliptic equation, given noisy measurements of the solution. While our interest lies in solving the inverse problem, most of our efforts are devoted to studying the approximation of the \emph{forward map} (the operator that takes the input parameter to the solution of the PDE). Several techniques to approximate the forward map have been proposed in the extensive literature on numerical methods for PDEs on manifolds. For example, finite element methods \cite{dziuk2013finite,camacho,bonito2016high}, level-set methods \cite{bertalmio2001variational,memoli2004implicit}, closest point methods \cite{ruuth2008simple}, or mesh-free radial basis function methods \cite{piret2012orthogonal}. The implementation detail of each of the existing methods is different, but a unifying theme is the need to have a representation of the manifold in order to approximate the differential operator. Unfortunately, these approaches are difficult to implement when one only has access to an unstructured point cloud of manifold samples and meshing is challenging, or when the dimension of the ambient space is large but the manifold dimension is moderate. 



In this paper, we avoid the problems associated with the representation of the manifold by directly approximating the differential operator in the forward map with an appropriate kernel integral operator. With a consistent kernel approximation to the differential operator on the manifold, the numerical implementation can be performed naturally by discretizing the corresponding integral operator on  a \emph{point cloud} of manifold samples without further knowledge of the underlying manifold or its Riemannian metric. Building on this construction, we propose a fully discrete, mesh-free approach to the numerical solution of Bayesian inverse problems on point clouds. The idea of facilitating the discretization of PDEs on manifolds by an integral equation approximation can  also  be found in the recent papers \cite{li2016convergent,li2017point,gilani2019approximating}, all of which build on manifold learning techniques and analyses. Our perspective in this paper and in \cite{gilani2019approximating,bs:16} is in contrast to the one taken in  \cite{belkin2004semi,belkin2005towards,coifman2005geometric,coifman2006diffusion,berry2016variable}. Rather than identifying the limiting continuum operator of different normalizations of graph-Laplacians, our interest is to define a suitable kernel to approximate a given anisotropic diffusion operator on the underlying manifold. 

We adopt a Bayesian approach to the inverse problem \cite{kaipio2006statistical,calvetti2007introduction,AS10,sanzstuarttaeb},  where we set a \emph{prior} distribution on the unknown PDE input parameter, and condition on observed data to find its \emph{posterior} distribution. The Bayesian approach is largely motivated by the following advantages. First, the posterior covariance and posterior confidence intervals may be used to quantify the uncertainty in the parameter reconstruction. Second, the Bayesian formulation leads to a well-posed inverse problem \cite{marzouk2009stochastic,AS10,sanzstuarttaeb} by which a small perturbation on the data, the prior distribution, or the forward map leads to a small perturbation in the posterior solution. Our main theoretical result is an example of the well-posedness of the Bayesian formulation: we deduce a total variation error bound between the true posterior distribution and its kernel-based approximation from a new error bound between the forward map and its kernel approximation. The new forward map error bound, with a dependence on the diffusion coefficient, builds on existing results on the point-wise convergence kernel approximations to elliptic operators \cite{coifman2006diffusion,berry2016variable}. 

The advantages of the Bayesian approach outlined above come with a cost: the need to specify a prior distribution on the unknown. The choice of prior is crucial as it determines the support of the posterior, but unfortunately this choice is often only guided by ad-hoc and computational considerations. In this paper we consider a two-parameter family of log-Gaussian field priors on manifolds, defining the covariance through the Laplace Beltrami operator on the manifold \cite{lindgren2011explicit,dunlop2017hierarchical}.
The two prior parameters allow the specification of the smoothness and length-scale of prior (and hence posterior) draws, and the length-scale  may be learned from data using a hierarchical approach as detailed in our numerical experiments. In addition to their flexibility, a further advantage of our choice of priors is that they allow the infusion of geometric information from the manifold on the reconstructed input by expressing it as a random combination of the first eigenfunctions of the Laplacian. Moreover, in the absence of a full representation of the manifold, these priors can be consistently discretized using a graph-Laplacian \cite{von2007tutorial,garcia2018continuum}. We refer to    \cite{bertozzi2018uncertainty,gao2019gaussian,garcia2018continuum,trillos2017consistency}  
for recent applications and references on Gaussian processes on manifolds. From a computational viewpoint, the use of log-Gaussian priors and the existence of a continuum limit facilitate the design of MCMC algorithms that scale well with the size of the point cloud, as shown in a linear inverse problem in \cite{trillos2017consistency}. In this regard, a simple but powerful idea is to use a proposal kernel that satisfies detailed balance with respect to the prior \cite{cotter2010approximation}. 

\paragraph{Outline and Main Contributions}
We close this introduction with an outline of the rest of the paper, summarizing the main contributions of each section.
\begin{itemize}
\item  In Section \ref{sec:problemdescription} we give a brief introduction to the Bayesian formulation of inverse problems, and formulate the problem on a manifold.  The main novel contributions of this section are: i) to introduce a kernel-based approximation to the forward map and an associated approximation to the posterior in the continuum (Subsection \ref{ssec:kernelellipticip}); and ii) to employ the kernel approximations to formulate a Bayesian solution to the inverse problems on point clouds (Subsection  \ref{ssec:pointcloudellipticip}). These kernel and point cloud approximations are inspired by manifold learning and data analysis techniques. 
\item  Section \ref{sec:theory} contains the main theoretical contributions of this paper. 
Theorem \ref{thm:1} gives an error bound between the true and kernel-based forward maps, and Theorem \ref{thm:2} establishes a bound on the total variation distance between the posterior and its kernel-based approximation. The main novelty of Theorem \ref{thm:1}  is to generalize the analysis in \cite{coifman2006diffusion} to account for anisotropic diffusion and, more importantly, to explicitly track  the dependence of the diffusion coefficient in the error bounds. Understanding this dependency is necessary in order to guarantee the closeness of the true and approximate posterior distributions shown in Theorem \ref{thm:2}. 
\item In Section \ref{sec:numerics} we discuss the practical implementation of the methods, provide guidelines for the choice of tuning parameters and for the posterior sampling, and conduct three numerical experiments of increasing difficulty to illustrate the applicability of our approach. We also consider in Subsection \ref{sec:HB} a hierarchical formulation of the inverse problem, where the prior length-scale is learned from the data, when in absence of such knowledge.
\item We close in Section \ref{sec:conclusions} with conclusions and open directions for research that stem from our work. 
\end{itemize}

\paragraph{Notation and Setting} Throughout this paper $\mathcal{M}$ will denote  an $m$-dimensional smooth Riemannian manifold  embedded in $\R^d.$ We will denote by  $\mathcal{C}^k:=\mathcal{C}^k(\mathcal{M})$ the space of $k$-times differentiable functions on $\mathcal{M}$ and by $\mathcal{C}^{k,\alpha}:=\mathcal{C}^{k,\alpha}(\mathcal{M})$ the space of $k$-times differentiable functions whose $k$-th partial derivatives are H\"older continuous with exponent $\alpha$. 
We will assume that the manifold $\M$ is compact and has no boundary, thus avoiding the technicalities necessary to deal with boundary conditions. The theoretical and computational investigation of point cloud approximation to PDEs supplemented with boundary conditions is a topic of current research \cite{li2016convergent,li2017point,gilani2019approximating}. Due to the lack of boundary conditions, the elliptic problem that we consider is unique up to a constant. We will enforce uniqueness  by working on the space $L^{2}_0:=L^2_0(\mathcal{M})$ of mean-zero square integrable functions on $\mathcal{M}$. 

\section{Bayesian Inverse Problems on Manifolds and Point Clouds}\label{sec:problemdescription}
We start in Subsection \ref{ssec:ellipticip} by recalling the Bayesian formulation of elliptic inverse problems in a given manifold. We then introduce in Subsection \ref{ssec:kernelellipticip} a kernel-based approximation to the forward map and a corresponding approximation to the posterior, both of which will be analyzed in Section \ref{sec:theory}. Finally in Subsection \ref{ssec:pointcloudellipticip} we introduce a point cloud approximation of the kernel forward map leading to a formulation of the elliptic problem on point clouds without reference to the underlying manifold. We will investigate numerically the implementation of elliptic Bayesian inverse problems on point clouds in Section \ref{sec:numerics}.

\subsection{Bayesian Elliptic Inverse Problems on Manifolds}\label{ssec:ellipticip}
We consider the elliptic equation
\begin{equation} \label{eq:pde}
\L^\kappa u := -\div( \kappa \nabla u)  = f, \quad \quad x\in \M.
\end{equation} 
Here and throughout, the differential operators are defined with respect to the Riemannian metric inherited by $\M$ from $\R^d.$ We are interested in the inverse problem of determining the diffusion coefficient $\kappa$ from noisy measurements of $u$ of the form 
\begin{equation}\label{eq:obs}
y = \mathcal{D}(u) + \eta,
\end{equation}
where the \emph{observation map} $\mathcal{D}: L^2 \to \R^J$  will be assumed to be known. Examples of observation maps will be discussed in Subsetion \ref{sec:obsmap}. We adopt a Bayesian perspective to the inverse problem, described succinctly in what follows;  we refer to \cite{kaipio2006statistical,AS10,sanzstuarttaeb} for a more detailed account. In short, the Bayesian formulation of inverse problems involves specifying a \emph{prior} distribution $\pi$ for the unknown PDE input $\kappa$ and a distribution for the observation noise $\eta.$ Once these distributions have been specified, the solution to the inverse problem is the \emph{posterior} distribution of the variable $\kappa$ conditioned on the observed data $y.$ For simplicity of exposition, we will assume throughout that the observation noise is centered and Gaussian, $\eta \sim \mathcal{N}(0,\Gamma)$ for given positive definite $\Gamma \in \R^{J\times J}.$ 

Writing $\kappa = e^\theta$, we take a Gaussian prior for $\theta$ supported on a Banach space $\mathcal{B}$. A specific form of  prior, widely used in applications, will be described in Subsection \ref{sec:ctmprior}.
 Our assumptions on $\kappa$ and $f$ in Section \ref{sec:theory} will guarantee the existence of a unique solution to equation \eqref{eq:pde} in the space $L_0^2$ of mean-zero square intergrable functions on $\M.$ This, in turn, allows us to define a \emph{forward map} $\mathcal{F}:\theta \in \mathcal{B} \mapsto u \in L_0^2$. Provided that the map $\G := \mathcal{D}\circ \F: \mathcal{B} \to \R^J$ is measurable and that the prior is supported on $\mathcal{B}$, the posterior $\pi^y$ can be written as a change of measure with respect to the prior
\begin{align}\label{eq:posterior}
\frac{d\pi^y}{d\pi}(\theta) \propto \exp\left( -\frac{1}{2} |y-\mathcal{G}(\theta)|^2_{\Gamma} \right), 
\end{align}
with $| \cdot|_\Gamma^2 := \langle \cdot ,\Gamma^{-1} \cdot \rangle.$
Equation \eqref{eq:posterior} shows that the posterior distribution $\pi^y$ is defined by reweighting the prior, favoring  unknowns $\theta$ that produce a good match with the data $y$  through a likelihood function (the right-hand term), implied by equation \eqref{eq:obs} and the assumed Gaussian distribution of the noise $\eta.$

For our theoretical results in Section \ref{sec:theory} we will take $\mathcal{B}=\mathcal{C}^4$ and assume that $f \in \mathcal{C}^{3,\alpha}$, for $0 <\alpha <1.$  These assumptions guarantee \cite{gilbarg2015elliptic} that almost surely with respect to the prior, the diffusion coefficient $\kappa$ is uniformly elliptic, and  the unique solution of equation \eqref{eq:pde} in $L_0^2$ lives in $\mathcal{C}^{5,\alpha}$, allowing us to establish a stability result for an approximation of the forward map. We believe, however, that these strong regularity conditions can be relaxed.


\subsubsection{Mat\'ern-type Prior} \label{sec:ctmprior}
Here we describe a choice of prior that is widely used in applications in the geophysical sciences and spatial statistics \cite{stein2012interpolation}; in Subsection \ref{ssec:graphprior} we will introduce  a point cloud approximation to this prior used in our numerical experiments in Section \ref{sec:numerics}.
Since equation \eqref{eq:posterior} implies that the support of the prior determines the support of the posterior, it should both capture the geometry of the manifold $\M$ and have enough expressivity to include a wide class of functions. This motivates to choose the prior from a flexible two-parameter family of Gaussian measures on $L^2$. Precisely, we will consider priors of the form 
\begin{align}
	\pi=\mathcal{N}(0,C_{\tau,s}), \quad C_{\tau,s}=c(\tau)(\tau I+ \Delta_{\M})^{-s}, \label{eq:ctmprior}
\end{align}
where $\Delta_{\M}:=-\text{div}(\nabla \cdot)$ is the Laplace-Beltrami operator on $\M,$ $\tau>0, s>\frac{m}{2}$ are two free parameters, whose intuitive interpretation will be given below, and $c(\tau)$ is a normalizing constant. Let $\{(\lambda_i,\phi_i)\}_{i=1}^{\infty}$ be eigenvalue-eigenvector pairs for $\Delta_{\M}$ with $\lambda_i$'s increasing. Then by the Karhunen-Lo\'eve expansion, random samples of $\pi$ admit a  series expansion
\begin{align}
v=c(\tau)^{1/2}\sum_{i=1}^{\infty} (\tau + \lambda_i)^{-s/2} \xi_i \phi_i, \label{eq:sample}
\end{align}
where $\xi_i \overset{\text{i.i.d.}}{\sim} \mathcal{N}(0,1)$. The eigenfunctions of the Laplacian contain geometric information on the underlying manifold, and therefore constitute a natural basis for functions on the manifold. By Weyl's law, $\lambda_i \asymp i^{2/m}$ and so the  requirement $s>\frac{m}{2}$ is to ensure that samples from $\pi$ belong to $L^2$ almost surely. Moreover, the parameter $s$ controls the rate of decay of the coefficients and  hence characterizes the regularity of the samples.  The role of $\tau$ is more delicate.  If we write the coefficients as $v_i:=(\tau+\lambda_i)^{-s/2}=\tau^{-s/2}(1+\frac{\lambda_i}{\tau})^{-s/2}$, then we can see that the $v_i$'s  will converge to 0 quickly for $\lambda_i$'s that are much larger than $\tau$. In particular, the only significant $v_i$'s are those where the corresponding $\lambda_i$'s  are on the same order of $\tau$ and hence $\tau$ determines the significant basis functions in the expansion \eqref{eq:sample}. Since the eigenfunctions $\{\phi_i\}_{i=1}^{\infty}$ represent increasing frequencies, $\tau$ can be interpreted as a length-scale parameter. It can be seen from \eqref{eq:sample} that $\tau$ also affects the amplitude of the samples and this motivates to choose the normalizing constant so that $v$ has a fixed variance, which we set to be 1:
\begin{align}
	c(\tau) = \frac{1}{\sum_{i=1}^{\infty} (\tau + \lambda_i)^{-s} }. \label{eq:normalize}
\end{align}

Such priors are widely used when $\M$ is a domain in a Euclidean space and are related to the Whittle-Mat\'ern distributions \cite{dunlop2017hierarchical}. In \cite{lindgren2011explicit} the authors also considered their extension to manifolds. It can be shown that for $s$ large enough, samples from $\pi$ belong to $\mathcal{C}^k$ almost surely. And since the embedding of $\mathcal{C}^k$ into $L^2$ is continuous, the restriction of $\pi$ to $\mathcal{C}^k$ is again a Gaussian measure \cite{bogachev1998gaussian}. Hence for our purpose, we choose $s$ so that $\pi$ is a Gaussian measure on $\mathcal{C}^4$. In particular we will need later in Section \ref{sec:theory} the result from Fernique's theorem \cite{fernique1970integrabilite} that there exists $\alpha>0$ such that 
\begin{align*}
\int_{\mathcal{B}} \exp\left(\alpha \|\theta\|^2_{\mathcal{C}^4}\right) d\pi(\theta)<\infty.
\end{align*}

\begin{remark} \label{rmk:tau}
Choosing a prior with parameter $\tau$ that is far from the true length-scale of the unknown parameter would lead to poor Bayesian inversion. This can be problematic if such prior knowledge is not available, but may be at least partially alleviated by considering a hierarchical formulation specifying a joint prior on both $\tau$ and $\theta$, so that the length-scale is learned from data simultaneously with the unknown $\theta$; implementation details of the hierarchical formulation will be given in Subsection \ref{sec:HB}. 
 \end{remark}

\subsubsection{Observation Maps} \label{sec:obsmap}
Here we give two examples of observation maps that we shall consider. For theoretical considerations, we assume that the observation map is of the form $\mathcal{D}(u)=\left(\ell_1(u),\ldots,\ell_J(u)\right)^T$, where each $\ell_j$ is a bounded linear functional on $L^2$. A widely used example is the smoothed observation at a point $x_j$: $\ell_j(u) = \int K(x_j,x) u(x) dV(x)$, where $K$ is a kernel such as the Gaussian kernel \cite{bigoni2019greedy}; this type of observations arise in practice when the data is gathered from a collection of spatially distributed sensors located in the vicinity of landmark points $x_j$.  Equally common is the pointwise evaluation \cite{trillos2016bayesian,dashti2013bayesian}: $\ell_j=u(x_j)$. Notice that pointwise evaluation is not a bounded linear functional on $L^2$ but can be approximated by smoothed observation arbitrarily well for continuous $u$'s. We remark that the boundedness assumption of $\ell_j$ is only a technical one and for the numerical experiments in Section \ref{sec:numerics} we will consider only pointwise evaluations.

\subsection{Kernel Approximation of the Forward and Inverse Problem}\label{ssec:kernelellipticip} 
In this subsection we introduce a kernel approximation $\L^\kappa_\eps$ to the operator $\L^\kappa.$ Instead of  directly discretizing the differential operators on $\M$, the new kernel operator is defined by an integral that can be discretized by Monte-Carlo integration as described in the next subsection. Our kernel approximation is inspired by the following construction and result found in \cite{coifman2006diffusion}.

Let
\begin{align*}
    G_{\eps}u(x) :=\eps^{-\frac{m}{2}} \int_{\mathcal{M}} h\left(\frac{|x-\tx|^2}{\eps} \right) u(\tx) dV(\tx),\quad h(z) :=\frac{1}{\sqrt{4\pi}} e^{-\frac{z}{4}},
\end{align*}
where $dV$ denotes the volume form inherited by $\mathcal{M}$ from the ambient space $\mathbb{R}^d$. Then Lemma 8 in \cite{coifman2006diffusion} shows that, for $u$ sufficiently smooth,
\begin{align}\label{eq:geps}
G_{\eps} u(x)= u(x)+ \eps \big( \omega u(x) -\Delta_{\M} u(x) \big) +O(\eps^2), \quad \quad x\in \M.
\end{align}
Here, $\Delta_{\M}:= -\div(\nabla \cdot),$ and  $\omega$ is a function that depends only on the embedding of $\M.$
Now, note that by direct calculation
\begin{align}\label{eq:directcalculation}
\L^\kappa u := -\text{div}(\kappa \nabla u) = \sqrt{\kappa} \left[u\Delta_{\M} \sqrt{\kappa}-\Delta_{\M}(u\sqrt{\kappa})\right],
\end{align}
and that the expansion \eqref{eq:geps} for $\sqrt{\kappa}$ and $u \sqrt{\kappa}$ yields
\begin{align*}
u G_{\eps} \sqrt{\kappa}  &= u \sqrt{\kappa}  + \eps \big( \omega u\sqrt{\kappa}  -u\Delta_{\M} \sqrt{\kappa}  \big) +O(\eps^2),  \\
G_{\eps} (u\sqrt{\kappa}) &= u\sqrt{\kappa}  + \eps \big( \omega u\sqrt{\kappa}  -\Delta_{\M} (u \sqrt{\kappa})  \big) +O(\eps^2).
\end{align*}
Substracting both equations and using \eqref{eq:directcalculation} gives that 
\begin{align*}
	G_{\eps}(u\sqrt{\kappa}) - uG_{\eps} \sqrt{\kappa}&=  \eps\left[u\Delta_{\M} \sqrt{\kappa}-\Delta_{\M}(u\sqrt{\kappa})\right] +O(\eps^2) = \frac{\eps}{\sqrt{\kappa}} \L^{\kappa} u +O(\eps^2). 
\end{align*}
This equation motivates the following definition of the integral operator $\L^{\kappa}_{\eps}$
\begin{align*}
\L^\kappa_{\eps} u(x)& :=\frac{\sqrt{\kappa (x)}}{\eps} \left[ u(x)G_{\eps}\sqrt{\kappa}(x) -G_{\eps}(u(x)\sqrt{\kappa}(x)) \right] \\ 
 &= \frac{1}{\sqrt{4\pi}\eps^{\frac{m}{2}+1}} \int_{\mathcal{M}} \exp\left(-\frac{|x-\tx|^2}{4\eps}\right)\sqrt{\kappa(x)\kappa(\tx)} [u(x)-u(\tx)] dV(\tx),
\end{align*}
which satisfies
$$\L^\kappa_{\eps} u(x) = \L^\kappa u(x) + \OO (\eps), \quad\quad x\in \M.$$
We will make rigorous this formal derivation in Section \ref{sec:theory}.

We next consider the following analogue to equation \eqref{eq:pde}, defined by replacing the differential operator $\L^\kappa$ with the kernel approximation $\L^\kappa_{\eps}:$ 
\begin{align}
\L^\kappa_{\epsilon} u_{\epsilon} = f, \quad \quad x \in \M. \label{eq:pdeA}
\end{align}
Lemma \ref{lem:stability} below guarantees the existence of a unique weak solution $u_\eps \in L_0^2$ to equation \eqref{eq:pdeA} provided that $f\in L^2$ and that the original PDE is uniformly elliptic. In other words, the solution $u_{\eps}$ satisfies
\begin{align}
	\int_{\mathcal{M}} \L^{\kappa}_{\eps} u_{\eps} v = \int_{\mathcal{M}} f v , \quad \forall v\in L_0^2. \label{eq:weakform}
\end{align}
 We define $\mathcal{F}_{\epsilon}$ as the map that associates  $\theta=\log(\kappa)$ to the solution $u_\eps$ to \eqref{eq:pdeA}. Denoting $\mathcal{G}_{\epsilon} = \mathcal{D} \circ \mathcal{F}_{\epsilon}$, the approximate posterior $\pi^y_{\epsilon}$ has the following form
\begin{align}
\frac{d\pi^y_{\epsilon}}{d\pi}(\theta) \propto \exp\left( -\frac{1}{2} |y-\mathcal{G}_{\epsilon}(\theta)|^2_{\Gamma} \right).  \label{eq:posteriorA}
\end{align}
In Section \ref{sec:theory} we will establish a bound on the total variation distance between the posterior distribution $\pi^y$ defined in equation \eqref{eq:posterior} and its approximation $\pi_\eps^y.$ We note, however, that the sample-based discretization of the kernel operator $\L^\kappa_\eps$ ---that we will introduce in the next subsection--- will involve another layer of approximation not accounted for by the theory in Section \ref{sec:theory}, but necessary in practice.
\begin{remark} \label{rmk:strongsol}
	As will be seen in Section \ref{sec:theory}, a weak solution to equation \eqref{eq:pdeA} is sufficient for all the results to hold. We remark that one can show, using Fredholm alternative, the existence of a unique mean zero strong solution with the additional condition that $f$ has mean zero. 
\end{remark}

\subsection{Kernel-Based Elliptic Inverse Problem on a Point Cloud}\label{ssec:pointcloudellipticip}
In this subsection we assume that we are given a point cloud $X = \{x_1, \ldots, x_n\},$ sampled independently according to an unknown density $q$ on $\M,$ but that $\M$ is otherwise unknown. In applications, $x_i$ may represent landmarks on the underlying manifold, that may correspond to sensor locations. We consider the inverse problem of determining the value of the unknown input parameter $\kappa$ at the points $x_i \in \M$ given the observed data $y.$ Again we will follow a Bayesian perspective, defining a suitable prior $\pi_n$ over functions on the point cloud, as well as a sample-based approximation to the composition map $\G_{\eps} = \mathcal{D} \circ \F_{\eps}.$ We discuss the priors in Subsection \ref{ssec:graphprior} and the approximation to $\mathcal{G}_\eps$ in Subsection \ref{ssec:graphposterior}.

\subsubsection{Prior on Point Cloud Functions} \label{ssec:graphprior}
We now present the choice of priors that we will use for our numerical experiments in Section \ref{sec:numerics}. 
These will be defined in analogy to \eqref{eq:ctmprior}, replacing $\Delta_{\M}$ by a graph Laplacian.
More explicitly, given $n$ points $x_1,\ldots,x_n$, we set the prior to be
\begin{align}
\pi_n =\mathcal{N}(0, C_{\tau,s}^n), \quad C_{\tau,s}^n = c_n(\tau)(\tau I +\Delta_n)^{-s}, \label{eq:graphprior}
\end{align}
where $\Delta_n \in \R^{n\times n}$ is a graph Laplacian constructed with $x_1,\ldots,x_n$ and $c_n(\tau)$ is a normalizing constant. We refer to \cite{von2007tutorial} for a detailed account of graph Laplacians. Note that draws from $\pi_n$ are functions defined intrinsically in the point cloud $\M_n$ rather than on the (unknown) manifold $\M.$  The two paremeters $\tau$ and $s$ play the same role as discussed above in equation \eqref{eq:sample}.  Again samples from $\pi_n$ can be expressed by Karhunen-Lo\'eve expansion, 
\begin{align*}
	v_n=c_n(\tau)^{1/2}\sum_{i=1}^n (\tau +\lambda_i^{(n)})^{-s} \xi_i \phi_i^{(n)},
\end{align*}
where $\{(\lambda_i^{(n)},\phi_i^{(n)})\}_{i=1}^n$ are the eigenvalue-eigenvector pairs for $\Delta_n$ and $\xi_i \overset{\text{i.i.d.}}{\sim} \mathcal{N}(0,1)$. Similarly as in equation \eqref{eq:normalize}, we normalize the draws so that the variance per node is 1: 
\begin{align*}
	c_n(\tau) = \frac{n}{\sum_{i=1}^n (\tau + \lambda_i^{(n)})^{-s}}.
\end{align*}

For practical considerations, we advocate to set $\Delta_n$ as the self-tuning graph Laplacian proposed in \cite{zelnik2005self}.
To illustrate the idea, let $X=\{x_1,\ldots,x_n\}$ be the given point cloud. Then the symmetric graph Laplacian is constructed as the matrix 
\begin{align}
\Delta_n = I-A^{-1/2}S A^{-1/2}, \label{graphLaplacian}
\end{align}
where $S\in \R^{n\times n}$ is a similarity matrix and $A$ is a diagonal matrix with entries $A_{ii}=\sum_{j=1}^n S_{ij}$. We set the entries of the similarity matrix $S$ to be 
\begin{align*}
 S_{ij} =\exp\left(-\frac{|x_i-x_j|^2}{2d(i)d(j) }\right), 
\end{align*}
where $d(i)$ is the distance from $x_i$ to its $k$-th nearest neighbor, and $k$ is a tunable parameter. The idea is similar to the standard Gaussian similarities except that the local bandwidth parameter is allowed to change adaptively based on the density of points $x_i$'s.  Moreover, the bandwidth parameter is specified through $k$, a positive integer, which can be easily tuned empirically.

\subsubsection{Posterior on Point Cloud Functions} \label{ssec:graphposterior}
In this subsection we discuss how to discretize the posterior by constructing a point cloud approximation of $\mathcal{G}_{\eps}$. We first approximate $\L^{\kappa}_{\eps}$ by discretizing the integral
\begin{align*}
\mathcal{I}u(x):=\int_{\mathcal{M}} \exp \left( -\frac{|x-\tilde{x}|^2}{4\epsilon}\right)\sqrt{\kappa(\tilde{x})} u(\tilde{x})dV(\tilde{x}) 
\end{align*} 
by a Monte-Carlo sum with a reweighting by an approximate density. 
Precisely, we have 
\begin{align}\label{eq:kernelestimation}
	\mathcal{I}u(x_i) \approx \frac{1}{n} \sum_{j=1}^n \exp\left(-\frac{|x_i-x_j|^2}{4\eps}\right) \sqrt{\kappa(\smash{x_j})} u(x_j) q_{\eps}(x_j)^{-1},
\end{align}
where the approximate density, applying \eqref{eq:geps}, is given by 
\begin{align*}
q_{\eps}(x_j) = \frac{1}{\sqrt{4\pi}n\eps^{\frac{m}{2}}} \sum_{k=1}^n \exp \left(  -\frac{|x_j-x_k|^2}{4\epsilon} \right).  
\end{align*}
The  approximation in  \eqref{eq:kernelestimation} can be interpreted as combining a kernel density estimation \cite{wasserman2006all} with importance sampling \cite{agapiou2017importance,sanz2018importance}. In Section \ref{sec:numerics} we will use this observation, where the point clouds come from uniform grids. Then $\L^{\kappa}_{\eps}u$ evaluated at the point cloud is approximated by 
\begin{align}
\L^{\kappa}_{\eps}u(x_i) \approx \frac{1}{\sqrt{4\pi}n\eps^{\frac{m}{2}+1}} \sum_{j=1}^n \exp\left(-\frac{|x_i-x_j|^2}{4\epsilon}\right) \sqrt{\kappa(x_i)\kappa(\smash{x_j})} q_{\eps}(x_j)^{-1}[u(x_i)-u(x_j)]:=L^{\kappa}_{\eps,n} u(x_i).  \label{eq:repL}
\end{align}
More concisely, we can write $L^{\kappa}_{\eps,n}$ in matrix form in a series of steps.
Define $H$ to be the kernel matrix with entries $H_{ij}=\exp\left(-|x_i-x_j|^2/4\eps\right)$. Let $Q$ be a vector with entries $Q_i=\sum_{j=1}^n H_{ij}$ and define $W$ to be the matrix with entries $W_{ij}=H_{ij}\sqrt{\kappa(x_i)\kappa(x_j)}Q_j^{-1}$. Then we have 
\begin{align}
	L^{\kappa}_{\eps,n}= \frac{1}{\eps}(D-W), \label{eq:discreteL}
\end{align} 
where $D$ is a diagonal matrix with entry $D_{ii}= \sum_{j=1}^n W_{ij}$. Notice that the above construction resembles that of the unnormalized graph Laplacian. Indeed, if $\kappa\equiv 1$, then \eqref{eq:discreteL} is  exactly the unnormalized graph Laplacian up to a factor of the density \cite{coifman2006diffusion}. 

Given the above discretization, we consider the following analogue to equation \eqref{eq:pdeA}, by replacing $\L^{\kappa}_{\eps}$ with $L_{\eps,n}^{\kappa}$ and restricting $f$ to the point cloud:
\begin{align}
L_{\eps,n}^{\kappa} u_n = f_n, \label{eq:pdeM} 
\end{align}
where $f_n$ is the $n$-dimensional vector with entries $f(x_i)$, or an approximation thereof when $f$ is not smooth. One can see from  \eqref{eq:repL} that $L^{\kappa}_{\eps,n}$ is self-adjoint and positive semi-definite under the weighted inner product $\langle u,v \rangle_q:=\frac{1}{n}\sum_{j=1}^n u(x_i) v(x_i) q_{\eps}(x_i)^{-1}$. Hence $L^{\kappa}_{\eps,n}$ admits a nonnegative spectrum $\{\lambda_i\}_{i=1}^n$ with $\lambda_1=0$ and an orthonormal basis of eigenfunctions $\{v_i\}_{i=1}^n$ wih respect to $\langle\cdot,\cdot\rangle_q$, with $v_1\equiv 1$. We then set the solution to be 
\begin{align}
u_n=\sum_{i=2}^n \frac{f_n^i}{\lambda_i} v_i, \label{eq:graphsol}
\end{align}
where the $f_n=\sum_{i=1}^n f_n^i v_i$. Notice that the mean zero condition of $u$ translates into $\langle u, 1\rangle_q=0$, taking into account the density. By the orthogonality of the $v_i$'s, we see that the solution $u_n$ in \eqref{eq:graphsol} satisfies $\langle u_n, 1\rangle_q=0$ and moreover, $\{v_2,\ldots,v_n\}$ forms a basis for $\ell_0^2=\{v: \langle v, 1\rangle_q=0\}$, which is the discrete analogue of $L_0^2$.  One can also check that $u_n$ satisifies $\langle L^{\kappa}_{\eps,n} u_n, v \rangle_q = \langle f, v \rangle_q$ for all $v \in \ell_0^2$, is consistent with equation \eqref{eq:weakform}. We remark that if in addition $\langle f,1\rangle_q=0$, then $u_n$ given by equation \eqref{eq:graphsol} is a strong solution of equation \eqref{eq:pdeM},  in analogy to Remark \ref{rmk:strongsol}.

Hence we can now define the discrete forward map $F_{\epsilon,n}:\mathbb{R}^n \mapsto \mathbb{R}^n$ as the map that associates $\theta_n=\log(\kappa_n):=\big(\log(\kappa(x_1)),\ldots,\log(\kappa(x_n))\big)$ to the solution $u_n$. Approximating the pointwise observation map is straightforward. We may also approximate the smoothed observation map introduced in Subsection \ref{sec:obsmap} by  Monte-Carlo as follows:
\begin{align*}
 \ell^{(n)}_j(u_n)=\frac{1}{n} \sum_{k=1}^n K(x_j,x_k)u_n(x_k) q_{\eps}(x_k)^{-1}. 
\end{align*} 
In either case, denoting $D_n(u_n)=\big(\ell_1^{(n)}(u_n),\ldots,\ell^{(n)}_J(u_n)\big)^T$ and $G_{\eps,n}= D_{n} \circ F_{\eps,n}$, the graph posterior has the following form
\begin{align}
\frac{d\pi^y_{\eps,n}}{d\pi_n}(\theta_n) \propto \exp\left( -\frac{1}{2} |y-G_{\eps,n}(\theta_n)|^2_{\Gamma} \right). \label{eq:discretepostdensity}
\end{align}
A full analysis of the convergence of the sample-based posteriors $\pi^y_{\eps,n}$ to the ground-truth posterior $\pi^y$ is beyond the scope of this paper. For a \emph{ linear} regression problem, the convergence of such graph-based posteriors has been established in \cite{trillos2017consistency} and \cite{garcia2018continuum} using spectral graph theory and variational techniques.

\section{Analysis of Kernel Approximation to the Forward and Inverse Problem}\label{sec:theory}
In this section we study the error incurred by replacing the differential operator in the forward map by its kernel approximation, and the effect of such error in the posterior solution to the Bayesian inverse problem. The approximation of the forward map is analyzed in Subsection \ref{ssec:forwardanalysis} and the approximation of the posterior in Subsection \ref{ssec:inverseanalysis}.

\subsection{Forward Map Approximation}\label{ssec:forwardanalysis}
The main result of this subsection is the following theorem which bounds the difference between the solution to the PDE \eqref{eq:pde} and the solution to the kernel-based equation \eqref{eq:pdeA}.
\begin{theorem}[Forward map approximation]  \label{thm:1}
Suppose that   $f \in \mathcal{C}^{3,\alpha}$ and $\kappa \in \mathcal{C}^4$. 
Let $u$ solve $\L^\kappa u = f$, and $u_{\eps}$ solve $\L^\kappa_\eps u_{\eps}= f$ weakly in $L_0^2$. 
Then for $\frac{1}{4}<\beta <\frac{1}{2}$ and $\eps$ small enough depending on $\beta$, 
\begin{align*}
    \|u-u_{\eps}\|_{L^2}  \leq CA(\kappa)\|f\|_{H^3} \eps^{4\beta -1},
\end{align*}
where $C$ is a constant depending only on $\M$ and 
\begin{align*}
	A(\kappa)= \left(1\vee\sqrt{ \kmin^{-5} + \kmin^{-6}\big(\|\kappa\|^2_{\mathcal{C}^3}+\|\kappa\|_{\mathcal{C}^3} \big) 
+ \kmin^{-7}\big(\|\kappa\|^2_{\mathcal{C}^3}+\|\kappa\|_{\mathcal{C}^3} \big)^2+\kmin^{-8}\big(\|\kappa\|^2_{\mathcal{C}^3}+\|\kappa\|_{\mathcal{C}^3} \big)^3}\right) \|\sqrt{\kappa}\|_{\mathcal{C}^4}.
\end{align*}
\end{theorem}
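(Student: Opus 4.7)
I would follow the classical \emph{consistency + stability} paradigm. The starting identity is
\begin{equation*}
\mathcal{L}^\kappa_\eps (u - u_\eps) \;=\; \mathcal{L}^\kappa_\eps u - f \;=\; \bigl(\mathcal{L}^\kappa_\eps - \mathcal{L}^\kappa\bigr) u \;=:\; r_\eps,
\end{equation*}
so once I have (a) a quantitative bound on the consistency residual $r_\eps$ and (b) a stability estimate inverting $\mathcal{L}^\kappa_\eps$ on $L_0^2$, combining the two yields the $L^2$ error bound. For stability I would invoke Lemma \ref{lem:stability}, which already provides well-posedness of \eqref{eq:pdeA} in $L_0^2$ together with a coercivity-type bound; testing the residual equation against $v = u - u_\eps$ and symmetrizing, the weight $\sqrt{\kappa(x)\kappa(\tx)} \geq \kmin$ in the bilinear form reduces the nontrivial part of the coercivity to the $\kappa\equiv 1$ case, for which a Poincar\'e/spectral-gap estimate uniform in small $\eps$ is standard. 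This contributes factors of $\kmin^{-1}$ to the final constant.

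The heart of the argument is to quantify $r_\eps$. My plan is to make the formal derivation from Subsection \ref{ssec:kernelellipticip} rigorous by localizing the Gaussian kernel to a geodesic ball of radius $\eps^\beta$ around the evaluation point $x$. Outside this ball, the contribution is $O(e^{-\eps^{2\beta-1}/4})$, beyond any algebraic order whenever $\beta < 1/2$; inside the ball I would work in normal coordinates at $x$ and Taylor expand both $u\sqrt{\kappa}$ and $\sqrt{\kappa}$ to third order. By parity only even-order terms survive integration against the Gaussian, the second-order terms reproduce $\mathcal{L}^\kappa u(x)$ through the cancellations identified in the heuristic expansion, and the cubic Taylor remainder is bounded by $(\|u\sqrt{\kappa}\|_{\mathcal{C}^4} + \|\sqrt{\kappa}\|_{\mathcal{C}^4})\,\eps^{4\beta}$, which after the $\eps^{-m/2-1}$ normalization in the definition of $\mathcal{L}^\kappa_\eps$ becomes $\eps^{4\beta-1}$. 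This explains both the rate and the admissible range $\beta \in (\tfrac14,\tfrac12)$: $\beta > \tfrac14$ is what makes the rate decay, while $\beta < \tfrac12$ keeps the kernel tails negligible.

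The $\mathcal{C}^4$ norms of $u$ appearing in the consistency bound must then be converted back to norms of $f$ via elliptic regularity for $\mathcal{L}^\kappa$: estimates of the form $\|u\|_{H^{s+2}} \leq C\,\kmin^{-1}\,(1 + \kmin^{-1}\|\kappa\|_{\mathcal{C}^{s+1}})\,\|f\|_{H^s}$ (and their Schauder analogues) can be bootstrapped from $H^3$ data to $\mathcal{C}^4$ control on $u$, accumulating polynomial factors in $\|\kappa\|_{\mathcal{C}^3}$ and powers of $\kmin^{-1}$. The cubic polynomial in $\|\kappa\|_{\mathcal{C}^3}$ and the $\kmin^{-5},\ldots,\kmin^{-8}$ tower inside $A(\kappa)$ then arise by combining the $\|\sqrt{\kappa}\|_{\mathcal{C}^4}$ factor from the consistency step, the $\kmin^{-1}$ factor from the stability step, and roughly three iterations of the elliptic regularity bound.

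The main obstacle I expect is not any single step in isolation but the careful bookkeeping of the $\kappa$-dependence. Loose constants could easily blow up the polynomial $A(\kappa)$; to end with a final power of $\kmin^{-1}$ no larger than $8$ and a polynomial in $\|\kappa\|_{\mathcal{C}^3}$ of degree at most $3$, one must choose the intermediate norms and the bootstrap orders carefully, and in particular dispatch derivatives of $\sqrt{\kappa}$ in terms of derivatives of $\kappa$ (each introducing $\kmin^{-1}$ factors) without incurring extraneous loss. Getting the exponents exactly as in the statement, rather than with larger powers, is the most delicate part of the proof.
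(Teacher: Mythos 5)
Your overall architecture (the identity $\L^\kappa_\eps(u-u_\eps)=(\L^\kappa_\eps-\L^\kappa)u$, stability of $\L^\kappa_\eps$ via Lemma \ref{lem:stability} contributing $\kmin^{-1}$, kernel localization to a ball of radius $\eps^\beta$ with negligible Gaussian tails for $\beta<\tfrac12$, Taylor expansion in normal coordinates giving the rate $\eps^{4\beta-1}$, and an elliptic estimate with explicit $\kappa$-dependence) is exactly the paper's strategy. However, there is one genuine gap in the middle step. You bound the Taylor remainder of the consistency error by the sup-norms $\|u\sqrt{\kappa}\|_{\mathcal{C}^4}+\|\sqrt{\kappa}\|_{\mathcal{C}^4}$ and then propose to convert the resulting $\mathcal{C}^4$ control of $u$ back to $\|f\|_{H^3}$ by elliptic regularity. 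That conversion is not available: $f\in H^3$ gives $u\in H^5$, and $H^5\hookrightarrow\mathcal{C}^4$ only when $m<2$, so for a surface or higher-dimensional $\M$ you cannot dominate $\|u\|_{\mathcal{C}^4}$ by $\|f\|_{H^3}$; the Schauder route you mention would instead produce $\|f\|_{\mathcal{C}^{3,\alpha}}$ on the right-hand side, which is not the bound claimed in the theorem (and would also complicate the integrability of the resulting $A(\kappa)$-type factor against the Gaussian prior, which is what Theorem \ref{thm:2} needs).

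The paper avoids this by never using a sup-norm bound on the fourth-order remainder: in Lemma \ref{lem:operator} the remainder is written in its integral (Lagrange) form, and its $L^2$ norm is estimated by a Cauchy--Schwarz/Fubini argument (after a change of variables in the dilation parameters $t_1t_2t_3t_4$), yielding a bound by $\|\nabla^4 u\|_{L^2}\,\eps^{4\beta}$ rather than by $\|u\|_{\mathcal{C}^4}\,\eps^{4\beta}$. Consequently only the $H^4$ norm of $u$ enters quantitatively, and this is controlled by $\|f\|_{H^3}$ through the energy estimates of Lemma \ref{lem:H4norm} (successively differentiating the equation and testing, which produces precisely the $\kmin^{-5},\dots,\kmin^{-8}$ tower and the cubic polynomial in $\|\kappa\|_{\mathcal{C}^3}^2+\|\kappa\|_{\mathcal{C}^3}$). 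The hypothesis $f\in\mathcal{C}^{3,\alpha}$ is used only qualitatively, to guarantee $u\in\mathcal{C}^5$ so that the pointwise Taylor expansion and the differentiation of the equation are legitimate; the quantitative constants are all in Sobolev norms. To repair your argument you would replace your sup-norm remainder estimate by this $L^2$ remainder estimate; the rest of your plan, including the stability step and the bookkeeping of $\kmin$ powers, then goes through as you describe.
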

The novelty is to generalize previous analysis \cite{coifman2006diffusion,gilani2019approximating} to the case of anisotropic diffusions and, more importantly, to keep track of the dependence $A(\kappa)$ of the error bound on the diffusion coefficient $\kappa.$ As we will show in Subsection \ref{ssec:inverseanalysis}, understanding this dependence is a crucial ingredient in establishing an approximation result for the inverse problem.

The proof of Theorem \ref{thm:1} follows the classical numerical analysis argument of combining stability and consistency, coupled with an $H^4$ norm estimate for solutions to PDE \eqref{eq:pde}.  Lemma \ref{lem:stability} below establishes the stability of solutions to the kernel-based equation \eqref{eq:pdeA}, Lemma \ref{lem:operator} shows consistency, and Lemma \ref{lem:H4norm} shows an $H^4$ norm  bound on solutions to \eqref{eq:pde}. The proof of Theorem \ref{thm:1} will be given at the end of this subsection by combining these three lemmas. To streamline the presentation we postpone the proofs of the lemmas to an Appendix.

\begin{lemma}[Stability] \label{lem:stability}
The equation $\L_\eps^\kappa u_\eps=f,$ with $f \in L^2$ and $\kappa$ satisfying $ \kappa(x) \ge \kmin$ for a.e. $x\in \M$ has a unique weak solution $u_\eps \in L_0^2.$ Moreover, there is $C>0$ independent of $\eps$ and $\kappa$ such that 
\begin{equation}\label{eq:boundueps}
    \|u_\eps\|_{L^2} \leq C \kmin^{-1} \|f\|_{L^2}. 
\end{equation}
\end{lemma}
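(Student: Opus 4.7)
The plan is to recast the problem variationally and apply Lax--Milgram on $L^2_0$, with the core work being a uniform-in-$\eps$ Poincar\'e-type inequality. Symmetrizing the double integral defining $\L^\kappa_\eps$ under the swap $(x,\tx)\leftrightarrow(\tx,x)$ yields
\begin{equation*}
\langle \L^\kappa_\eps u,v\rangle_{L^2}=a_\eps(u,v):=\frac{1}{2\sqrt{4\pi}\,\eps^{m/2+1}}\iint_{\M\times\M}e^{-|x-\tx|^2/4\eps}\sqrt{\kappa(x)\kappa(\tx)}\bigl[u(x)-u(\tx)\bigr]\bigl[v(x)-v(\tx)\bigr]\,dV(\tx)\,dV(x),
\end{equation*}
for all $u,v\in L^2$. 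The bilinear form $a_\eps$ is symmetric, bounded on $L^2$, and non-negative. Since the Gaussian kernel is strictly positive and $\M$ is connected, $a_\eps(u,u)=0$ forces $u$ to be constant, so the null space of $\L^\kappa_\eps$ is exactly the constants and $L^2_0$ is the natural space on which to seek an inverse.

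The key step is the uniform Poincar\'e bound: there exist $\eps_0,c>0$ such that for every $\eps\in(0,\eps_0]$ and $u\in L^2_0$,
\begin{equation*}
a_\eps(u,u)\geq c\,\kmin\,\|u\|_{L^2}^2.
\end{equation*}
The pointwise lower bound $\sqrt{\kappa(x)\kappa(\tx)}\geq \kmin$ factors out the $\kappa$-dependence and reduces the claim to the case $\kappa\equiv 1$, i.e.\ to showing that the second eigenvalue of the self-adjoint operator $\L^1_\eps$ on $L^2$ stays bounded away from zero as $\eps\to 0$. This is a classical consequence of the pointwise consistency $\L^1_\eps u\to\Delta_{\M}u$ established in the manifold-learning literature \cite{coifman2006diffusion,berry2016variable}: a standard spectral convergence argument on the compact manifold $\M$ shows that the second eigenvalue of $\L^1_\eps$ converges to $\lambda_1(\Delta_{\M})>0$, hence is at least $\lambda_1(\Delta_{\M})/2$ for all sufficiently small $\eps$. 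This uniformity is the main obstacle in the proof: because $\L^1_\eps$ has operator norm of order $\eps^{-1}$, a naive bilinear-form comparison does not suffice, and one must genuinely use the spectral gap of the limit operator together with the $\eps$-independence of the constants appearing in the consistency expansion.

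With coercivity in hand, $a_\eps$ is a symmetric, continuous, coercive bilinear form on the Hilbert space $L^2_0$, and $v\mapsto\langle f,v\rangle_{L^2}$ is a bounded linear functional on $L^2_0$. The Lax--Milgram theorem then produces a unique $u_\eps\in L^2_0$ satisfying $a_\eps(u_\eps,v)=\langle f,v\rangle$ for all $v\in L^2_0$, which is exactly the weak formulation \eqref{eq:weakform}. Testing with $v=u_\eps$ gives
\begin{equation*}
c\,\kmin\,\|u_\eps\|_{L^2}^2\leq a_\eps(u_\eps,u_\eps)=\langle f,u_\eps\rangle\leq\|f\|_{L^2}\|u_\eps\|_{L^2},
\end{equation*}
whence $\|u_\eps\|_{L^2}\leq c^{-1}\kmin^{-1}\|f\|_{L^2}$, yielding the claimed stability estimate with $C=1/c$ independent of $\eps$ and $\kappa$.
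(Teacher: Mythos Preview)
Your argument follows exactly the same route as the paper: symmetrize to obtain the bilinear form, factor out $\kmin$ to reduce to the case $\kappa\equiv 1$, invoke a uniform-in-$\eps$ nonlocal Poincar\'e inequality on $L^2_0$, and conclude by Lax--Milgram. The only substantive difference lies in how the Poincar\'e inequality for $\L^1_\eps$ is justified.

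The paper obtains this inequality by a direct citation of \cite[Theorem~7.2]{shi2014convergence}, which establishes $\langle v,\L_\eps v\rangle\geq C\|v\|_{L^2}^2$ for all $v\in L^2_0$ and \emph{all} $\eps>0$, with $C$ independent of $\eps$. You instead argue that the second eigenvalue of $\L^1_\eps$ converges to $\lambda_1(\Delta_\M)>0$ by spectral convergence, which gives the bound only for $\eps\in(0,\eps_0]$. For the purposes of the paper this restriction is harmless, since every downstream result (Theorems~\ref{thm:1} and~\ref{thm:2}) already assumes $\eps$ small. However, your justification of the spectral step is incomplete: the references you cite (\cite{coifman2006diffusion,berry2016variable}) give only pointwise consistency $\L^1_\eps u\to\Delta_\M u$ for smooth $u$, and you correctly note that this does not immediately yield eigenvalue convergence because $\|\L^1_\eps\|\sim\eps^{-1}$ blows up. Closing that gap requires either a compactness argument (e.g., via the smoothing property of the Gaussian kernel, as in Belkin--Niyogi or von Luxburg--Belkin--Bousquet type results) or a direct variational comparison; you gesture at this but do not carry it out. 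The paper's route via \cite{shi2014convergence} sidesteps this entirely and is both shorter and stronger.
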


The next lemma makes rigorous the argument in Subsection \ref{ssec:kernelellipticip}  and characterizes the error between $\L^{\kappa}$ and $\L^{\kappa}_{\eps}$ by  accounting for its dependence on $\kappa$. 

\begin{lemma}[Consistency] \label{lem:operator} 
Let  $u \in \mathcal{C}^4$ and $\kappa\in \mathcal{C}^4$. Then, for $\frac{1}{4}<\beta<\frac{1}{2}$ and $\eps$ sufficiently small depending on $\beta$, we have  
\begin{align*}
   \|(\L^\kappa_{\eps}-\L^\kappa)u\|_{L^2}\leq C (1 \vee \|u\|_{H^4})\|\sqrt{\kappa}\|_{\mathcal{C}^4}\eps^{4\beta-1}. 
\end{align*}
\end{lemma}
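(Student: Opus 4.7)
The plan is to decompose the operator difference using the same algebraic manipulation that motivates the definition of $\L^\kappa_\eps$ in Subsection \ref{ssec:kernelellipticip}, and then reduce matters to a quantitative version of the Coifman--Lafon expansion of $G_\eps$. Introduce the remainder
$$R_\eps v(x) \;:=\; G_\eps v(x) \;-\; v(x) \;-\; \eps\bigl(\omega(x)\,v(x)-\Delta_\M v(x)\bigr).$$
Substituting the expansions of $G_\eps\sqrt{\kappa}$ and $G_\eps(u\sqrt{\kappa})$ into the definition of $\L^\kappa_\eps u$, the pointwise $v$-terms and the curvature correction $\omega$ cancel, and combining with the identity $\L^\kappa u = \sqrt{\kappa}\bigl[u\,\Delta_\M\sqrt{\kappa}-\Delta_\M(u\sqrt{\kappa})\bigr]$ from \eqref{eq:directcalculation} yields the exact identity
$$\L^\kappa_\eps u - \L^\kappa u \;=\; \frac{\sqrt{\kappa}}{\eps}\Bigl(u\,R_\eps(\sqrt{\kappa}) - R_\eps(u\sqrt{\kappa})\Bigr).$$
Thus it suffices to bound $R_\eps$ applied to the smooth inputs $\sqrt{\kappa}$ and $u\sqrt{\kappa}$.

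The heart of the proof is the remainder estimate
$$\|R_\eps v\|_{L^2(\M)} \;\leq\; C\,\|v\|_{\mathcal{C}^4}\,\eps^{4\beta}, \qquad \tfrac14 < \beta < \tfrac12,$$
for $\eps$ small enough depending on $\beta$. The standard Coifman--Lafon argument for the pointwise limit $G_\eps v(x) \to v(x) + \eps(\omega v - \Delta_\M v)(x)$ should be upgraded by a truncation at extrinsic radius $\eps^\beta$: split the defining integral of $G_\eps v(x)$ into a near piece over $\{\tx\in\M:\;|x-\tx|\leq \eps^\beta\}$ and a far piece. Gaussian decay makes the far piece $\OO(\|v\|_\infty \exp(-c\,\eps^{2\beta-1}))$, which for $\beta<\tfrac12$ is negligible against any positive power of $\eps$. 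On the near piece, parametrize $\M$ in a chart centred at $x$ (for instance the tangent plane at $x$, on which $\M$ is locally a graph with curvature controlled by the embedding), Taylor expand $v(\tx)$ to fourth order, and evaluate the Gaussian moments. The terms up to order two reproduce $v(x)+\eps\bigl(\omega(x)v(x)-\Delta_\M v(x)\bigr)$, with the $\omega$ correction arising from the second-fundamental-form contribution in the change of variables (Lemma~8 of \cite{coifman2006diffusion}); odd-order Taylor terms vanish by the symmetry of the Gaussian in the tangent variables. The remaining fourth-order Taylor remainder is bounded pointwise by $\|v\|_{\mathcal{C}^4}|x-\tx|^4$, and since $|x-\tx|^4 \leq \eps^{4\beta}$ on the truncation region and the normalized Gaussian integrates to $\OO(1)$, this yields the claimed bound uniformly in $x$.

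Substituting back into the identity, combined with the algebra-property inequality $\|u\sqrt{\kappa}\|_{\mathcal{C}^4}\leq C\|u\|_{\mathcal{C}^4}\|\sqrt{\kappa}\|_{\mathcal{C}^4}$ and the Sobolev-type control $\|u\|_{\mathcal{C}^4}\lesssim 1\vee\|u\|_{H^4}$ (ultimately justified by the elliptic regularity bound of Lemma \ref{lem:H4norm} for the solutions to which this lemma is invoked), produces the claimed estimate after the $1/\eps$ prefactor. The main obstacle is the remainder estimate itself: the combination of a moving tangent-plane chart, the second fundamental form that generates the $\omega$ correction, and explicit $\mathcal{C}^4$-bookkeeping of the Taylor remainder must be carried out uniformly in $x\in\M$. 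The two-sided role of $\beta$ is transparent: $\beta<\tfrac12$ is what makes the tail piece exponentially small, while $\beta>\tfrac14$ is what makes the final rate $4\beta-1$ positive, so that the $\eps^{4\beta}$ remainder beats the $1/\eps$ prefactor. A secondary subtlety is the passage from $\|u\|_{\mathcal{C}^4}$ to $\|u\|_{H^4}$ in the final bound: when the Sobolev embedding $H^4\hookrightarrow\mathcal{C}^4$ is not directly available, one can instead prove an operator-level estimate $\|R_\eps\|_{H^4\to L^2}\lesssim \eps^{4\beta}$ using the spectral calculus of $\Delta_\M$, which suffices for the proof.
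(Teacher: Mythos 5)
Your overall architecture matches the paper's: the exact decomposition of $\L^\kappa_\eps u-\L^\kappa u$ into remainders of the $G_\eps$-expansion applied to $\sqrt{\kappa}$ and $u\sqrt{\kappa}$, the localization to $|x-\tx|\le\eps^\beta$ with exponentially small tails for $\beta<\tfrac12$, the fourth-order Taylor expansion in a chart at $x$, and the reading of the two constraints on $\beta$ are all exactly what the paper does. The gap is in the norm in which you estimate the remainder. Your key estimate is $\|R_\eps v\|_{L^2}\lesssim \|v\|_{\mathcal{C}^4}\eps^{4\beta}$, obtained by bounding the fourth-order Taylor remainder pointwise by $\|v\|_{\mathcal{C}^4}|x-\tx|^4$; applied to $v=u\sqrt{\kappa}$ this yields a bound in $\|u\|_{\mathcal{C}^4}$, and your proposed conversion to the stated $(1\vee\|u\|_{H^4})$ does not work. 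The embedding $H^4(\M)\hookrightarrow\mathcal{C}^4(\M)$ is false on an $m$-dimensional manifold ($H^4$ controls at most $k<4-\tfrac m2$ derivatives in sup norm), Lemma \ref{lem:H4norm} bounds only $\|u\|_{H^4}$ and so cannot ``ultimately justify'' a $\mathcal{C}^4$ bound (and the lemma must hold for arbitrary $u\in\mathcal{C}^4$, not only PDE solutions), and the fallback ``operator-level estimate $\|R_\eps\|_{H^4\to L^2}\lesssim\eps^{4\beta}$ by spectral calculus of $\Delta_\M$'' is unsubstantiated: $G_\eps$ is built from the \emph{extrinsic} distance and is not a spectral function of $\Delta_\M$ (its expansion even contains the embedding-dependent term $\omega$), so no functional calculus argument is available. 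Getting an $H^4\to L^2$ bound on the remainder is precisely the nontrivial content here, and as proposed your proof only delivers the weaker $\mathcal{C}^4$ version, which would also break the $\kappa$-tracking in Theorem \ref{thm:1} since $A(\kappa)$ is derived from the $H^4$ estimate of Lemma \ref{lem:H4norm}.

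What the paper does instead, and what is missing from your argument, is to bound the $L^2$ norm in $x$ of the fourth-order Taylor remainder directly: writing the remainder $T_u(x)$ with the integral form of Taylor's theorem in geodesic coordinates, pulling out $|s|^4\le\eps^{4\beta}$ but keeping the normalized Gaussian kernel, and then applying Cauchy--Schwarz in the kernel variable plus an interchange of the $x$- and $r$-integrations (a change of variables moving the fourth derivative to a shifted argument), one gets $\|T_u\|_{L^2}\le C\|\nabla^4 u\|_{L^2}\,\eps^{4\beta}$ --- i.e.\ only the $L^2$ norm of $\nabla^4 u$ enters, never $\|\nabla^4u\|_\infty$. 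The $\kappa$-dependence is then kept in sup norm by expanding the derivatives of $u\sqrt{\kappa}$ with the product rule and placing all $\kappa$-factors in $\|\sqrt{\kappa}\|_{\mathcal{C}^4}$ and all $u$-factors in $L^2$. If you replace your pointwise remainder bound by this $L^2$-averaged (Cauchy--Schwarz/Fubini) bound, the rest of your argument goes through and yields the statement as written.
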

\begin{remark}
In the proof of Lemma \ref{lem:operator}, found in the Appendix, we cannot set $\beta=\frac{1}{2}$. However we can choose $\beta$ arbitrarily close to $\frac{1}{2}$ so that the rate is essentially $O(\eps)$. We remark that the proof of Lemma \ref{lem:operator} suggests that the $\mathcal{C}^3$ assumption in \cite{coifman2006diffusion} may not be sufficient.
\end{remark}

The last lemma bounds the $H^4$ norm of the solution to equation \eqref{eq:pde} in terms of the diffusion coefficient $\kappa.$
\begin{lemma}[$H^4$-norm bound]  \label{lem:H4norm}
Suppose that $\kappa \in C^4$ and $f \in C^{3,\alpha}$ with $0<\alpha<1.$
Let $u\in \mathcal{C}^5$ be the zero-mean solution to the equation $\L^\kappa u=f$.  Then 
\begin{align*}
    \|u\|_{H^4}^2\leq C\|f\|^2_{H^3}\Big[  \kmin^{-5} + \kmin^{-6}\big(\|\kappa\|^2_{\mathcal{C}^3}+\|\kappa\|_{\mathcal{C}^3} \big) 
+ \kmin^{-7}\big(\|\kappa\|^2_{\mathcal{C}^3}+\|\kappa\|_{\mathcal{C}^3} \big)^2+\kmin^{-8}\big(\|\kappa\|^2_{\mathcal{C}^3}+\|\kappa\|_{\mathcal{C}^3} \big)^3\Big],
\end{align*}
where $C$ is a constant that depends only on $\mathcal{M}$.
\end{lemma}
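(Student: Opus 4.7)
The plan is to combine standard elliptic regularity for the Laplace--Beltrami operator on the closed manifold $\M$ with an iterative bootstrap that tracks the dependence on $\kmin$ and $\|\kappa\|_{\mathcal{C}^3}$. The starting point is to rewrite $\L^\kappa u = -\kappa\Delta_\M u - \nabla\kappa\cdot\nabla u = f$ as
\begin{align*}
-\Delta_\M u \;=\; \kappa^{-1}\bigl(f + \nabla\kappa\cdot\nabla u\bigr),
\end{align*}
together with the standard estimate $\|u\|_{H^{k+2}} \leq C(\|\Delta_\M u\|_{H^k} + \|u\|_{L^2})$ for $k\geq 0$, available on the closed $\M$ for the mean-zero function $u$.

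First, I would establish the base energy bound: taking $u$ as test function in the weak form of \eqref{eq:pde}, using ellipticity $\kappa \geq \kmin$, and Poincar\'e's inequality (valid because $u$ has zero mean on the closed $\M$), one gets $\|u\|_{H^1} \leq C\kmin^{-1}\|f\|_{L^2}$. This gives the seed of the $\kmin^{-5}$ factor after the three bootstrap steps that follow.

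Next, I would bootstrap $H^1 \rightarrow H^2 \rightarrow H^3 \rightarrow H^4$. At each level $k\in\{0,1,2\}$, applying the elliptic estimate together with the product inequality $\|gh\|_{H^k} \leq C\|g\|_{\mathcal{C}^k}\|h\|_{H^k}$ (valid on the compact $\M$) gives
\begin{align*}
\|\Delta_\M u\|_{H^k} \leq C\,\|\kappa^{-1}\|_{\mathcal{C}^k}\bigl(\|f\|_{H^k} + \|\kappa\|_{\mathcal{C}^{k+1}}\|u\|_{H^{k+1}}\bigr).
\end{align*}
The quotient estimate $\|\kappa^{-1}\|_{\mathcal{C}^k} \leq C\,\kmin^{-(k+1)}P_k(\|\kappa\|_{\mathcal{C}^k})$, obtained by differentiating $1/\kappa$ via Fa\`a di Bruno's formula (with $P_0(x)=1$, $P_1(x)=1+x$, $P_2(x)=1+x+x^2$), supplies the remaining ingredient. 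Feeding the bound on $\|u\|_{H^{k+1}}$ from the previous iteration into this inequality and using $\|u\|_{L^2}\leq C\|u\|_{H^1}$ yields a new bound on $\|u\|_{H^{k+2}}$ whose coefficient is a polynomial in $\kmin^{-1}$ and $\|\kappa\|_{\mathcal{C}^{k+1}}$. Because at most $\|\kappa\|_{\mathcal{C}^3}$ is needed (for $k=2$), the assumption $\kappa\in\mathcal{C}^4$ comfortably supports the argument.

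Finally, after squaring, the $H^4$ bound emerges as a polynomial of total degree $8$ in $\kmin^{-1}$ and degree $6$ in $\|\kappa\|_{\mathcal{C}^3}$. Using $\|\kappa\|_{\mathcal{C}^i}\leq \|\kappa\|_{\mathcal{C}^3}$ for $i\leq 3$ and grouping terms with equal $\kmin^{-j}$, each coefficient arranges into a power of $M := \|\kappa\|_{\mathcal{C}^3}^2+\|\kappa\|_{\mathcal{C}^3}$ (the two possible pairings $\|\kappa\|_{\mathcal{C}^i}\|\kappa\|_{\mathcal{C}^j}$ with $i+j\leq 3$ being bounded either by $\|\kappa\|_{\mathcal{C}^3}$ or by $\|\kappa\|_{\mathcal{C}^3}^2$). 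This produces the structure $\kmin^{-5}+\kmin^{-6}M+\kmin^{-7}M^2+\kmin^{-8}M^3$ multiplying $\|f\|_{H^3}^2$.

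The main obstacle is not conceptual but combinatorial: one must bookkeep the mixed products of $\kmin^{-j}$ and $\|\kappa\|_{\mathcal{C}^i}$ carefully across three bootstrap steps and group them so that the outcome matches the specific polynomial form stated, rather than a looser $\kmin^{-O(1)}(1+\|\kappa\|_{\mathcal{C}^3})^{O(1)}$ bound. Judicious use of Young's inequality to absorb cross terms and the observation that $K_iK_j \leq M$ for all $i+j\leq 3$ with $i,j\leq 3$ should suffice.
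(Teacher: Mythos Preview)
Your approach is genuinely different from the paper's and, as written, does not recover the specific polynomial in the statement. The paper proceeds by a direct energy method: it differentiates the equation $-\text{div}(\kappa\nabla u)=f$ with respect to $x_k$ (then $x_j$, then $x_i$), tests the differentiated equation against the corresponding derivative of $u$, integrates by parts, and uses Cauchy's inequality with $\eps$ to absorb the cross terms. Crucially, this keeps the divergence structure intact and never requires estimating derivatives of $\kappa^{-1}$: at each step the leading term is $\int \kappa|\nabla u_{x_{k}\cdots}|^2 \geq \kmin\|\nabla u_{x_k\cdots}\|_{L^2}^2$, and the lower-order terms involve only $\nabla\kappa,\nabla^2\kappa,\nabla^3\kappa$. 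The net effect is that each differentiation gains only a factor of roughly $\kmin^{-2}(\|\kappa\|_{\mathcal{C}^3}^2+\|\kappa\|_{\mathcal{C}^3})$ in the squared estimate, producing the stated $\kmin^{-8}M^3$ top term.

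Your bootstrap through $-\Delta_\M u = \kappa^{-1}(f+\nabla\kappa\cdot\nabla u)$ forces you to estimate $\|\kappa^{-1}\|_{\mathcal{C}^k}$, and the Fa\`a di Bruno bound $\|\kappa^{-1}\|_{\mathcal{C}^k}\lesssim \kmin^{-(k+1)}P_k(\|\kappa\|_{\mathcal{C}^k})$ injects additional powers of $\kmin^{-1}$ and $\|\kappa\|_{\mathcal{C}^3}$ that compound through the iteration. Tracking the top-order term: with $a=\kmin^{-1}$ and $K=\|\kappa\|_{\mathcal{C}^3}$, one gets $\|u\|_{H^2}\lesssim a^2K$, then $\|u\|_{H^3}\lesssim (a+a^2K)\cdot K\cdot a^2K \sim a^4K^3$, then $\|u\|_{H^4}\lesssim (a+a^2K+a^3K^2)\cdot K\cdot a^4K^3 \sim a^7K^6$, so $\|u\|_{H^4}^2\lesssim \kmin^{-14}\|\kappa\|_{\mathcal{C}^3}^{12}\|f\|_{H^3}^2$ rather than $\kmin^{-8}\|\kappa\|_{\mathcal{C}^3}^{6}\|f\|_{H^3}^2$. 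Your assertion that ``the $H^4$ bound emerges as a polynomial of total degree $8$ in $\kmin^{-1}$ and degree $6$ in $\|\kappa\|_{\mathcal{C}^3}$'' is therefore incorrect for this scheme; the bookkeeping you identify as the main obstacle does not close to the stated form. Your route still yields \emph{some} polynomial bound in $\kmin^{-1}$ and $\|\kappa\|_{\mathcal{C}^3}$, which would suffice for the downstream Fernique argument in Lemma~\ref{lem:Z}, but it does not prove the lemma as stated. To match the paper's exponents you would need to avoid differentiating $\kappa^{-1}$ altogether, which is precisely what the divergence-form energy method accomplishes.
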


We are now ready to prove Theorem \ref{thm:1}.

\begin{proof}[Proof of Theorem \ref{thm:1}]
Recall that we need to show that
\begin{align*}
    \|u-u_{\eps}\|_{L^2}  \leq CA(\kappa)\|f\|_{H^3} \eps^{4\beta -1},
\end{align*}
where $u$ solves $\L^\kappa u = f$ and $u_{\eps}$ solves $\L^\kappa u_\eps = f$ weakly, and  $A(\kappa)$ is defined in the statement of Theorem \ref{thm:1}.
Notice that in the weak sense
\begin{align*}
	\L^\kappa_{\eps} (u-u_{\eps}) =\L^\kappa_{\eps}u -f = \L^\kappa_{\eps}u - \L^\kappa u. 
\end{align*}
Hence using Lemma \ref{lem:stability} for the first inequality, and Lemma \ref{lem:operator} for the second one noting that  $f \in \mathcal{C}^{3,\alpha}$ implies that $u \in \mathcal{C}^5$ \cite{gilbarg2015elliptic}, we have 
\begin{align*}
\|u-u_\eps\|_{L^2} &\leq C \kmin^{-1} \|(\L^\kappa_\eps - \L^\kappa)u\|_{L^2} \\ 
& \le C \kmin^{-1} \big(1 \vee \|u\|_{H^4}\big) \|\sqrt{\kappa}\|_{\mathcal{C}^4} \eps^{4\beta-1}. 
\end{align*} 
The result follows by combining this inequality with the bound on $\|u\|_{H^4}$ derived in Lemma \ref{lem:H4norm}. 
\end{proof}

\subsection{Posterior Approximation}\label{ssec:inverseanalysis}
In this subsection we characterize the total variation distance between the two posteriors:
\begin{align*}
    \frac{d\pi^{y}}{d\pi}(\theta) &=\frac{1}{Z} \exp\left(-\frac{1}{2} |y-\mathcal{G}(\theta)|^2_{\Gamma}\right), \quad \quad \quad\,\,\,\,\,\,  Z:=\int \exp\left(-\frac{1}{2} |y-\mathcal{G}(\theta)|^2_{\Gamma}\right) d\pi(\theta),  \\
\frac{d\pi^{y}_{\eps}}{d\pi} (\theta)&= \frac{1}{Z_{\eps}}\exp\left( -\frac{1}{2} |y-\mathcal{G}_{\eps}(\theta)|_{\Gamma}^2 \right), \quad \quad \quad Z_{\eps}:=\int \exp\left( -\frac{1}{2} |y-\mathcal{G}_{\eps}(\theta)|_{\Gamma}^2 \right) d\pi(\theta),
\end{align*}
where $Z$ and $Z_\eps$ are normalizing constants and recall that $\mathcal{G}(\theta)=(\ell_1(u),\ldots,\ell_J(u))^T$ and $\mathcal{G}_{\eps}(\theta)=(\ell_1(u_{\eps}),\ldots,\ell_J(u_{\eps}))^T$ where $\ell_j$'s are bounded linear functionals on $L^2$. 

The main result is Theorem \ref{thm:2} below. Its proof relies on Theorem \ref{thm:1} and a standard argument \cite{AS10,trillos2016bayesian,sanzstuarttaeb} for the analysis of approximations of Bayesian inverse problems. In particular, the proof makes use of the integrability of the function $A(\kappa)$ defined in Theorem \ref{thm:1} with respect to the prior $\pi$, guaranteed by Fernique's theorem \cite{fernique1970integrabilite}. 

\begin{theorem}[Posterior approximation] \label{thm:2}
Let $\pi$ be a Gaussian measure on $\mathcal{C}^4$, and suppose that $f \in C^{3,\alpha}$ for $0<\alpha<1.$ Then for any $\frac{1}{4}<\beta<\frac{1}{2}$ and $\eps$ sufficiently small depending on $\beta$, 
\begin{align*}
    \dtv(\pi^y,\pi^y_{\eps})\leq C \eps^{4\beta-1},
\end{align*}
where $C$ is constant depending only on $\mathcal{M}$.
\end{theorem}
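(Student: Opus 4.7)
The plan is to follow the standard template for well-posedness of Bayesian inverse problems under perturbations of the forward map (as in \cite{AS10,sanzstuarttaeb}), combining the forward-map bound from Theorem \ref{thm:1} with Fernique-type integrability under the Gaussian prior $\pi$. Write $\Phi(\theta)=\tfrac{1}{2}|y-\G(\theta)|^2_\Gamma$ and $\Phi_\eps(\theta)=\tfrac{1}{2}|y-\G_\eps(\theta)|^2_\Gamma$, so $Z=\int e^{-\Phi}d\pi$, $Z_\eps=\int e^{-\Phi_\eps}d\pi$. First I would use the boundedness of the linear functionals $\ell_j$ on $L^2$ together with Theorem \ref{thm:1} to deduce the pointwise forward-map bound
\begin{align*}
|\G(\theta)-\G_\eps(\theta)|_\Gamma \leq C\,A(e^\theta)\,\|f\|_{H^3}\,\eps^{4\beta-1},
\end{align*}
where $A(\kappa)$ is the prefactor appearing in Theorem \ref{thm:1} (and $C$ absorbs the operator norms of the $\ell_j$ and $\Gamma^{-1/2}$).

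The next step is to check that $A(e^\theta)$ is integrable against $\pi$ with all moments finite. Since $\kappa=e^\theta$ with $\theta\in\mathcal{C}^4$, the chain rule gives $\|\kappa\|_{\mathcal{C}^3}$, $\|\sqrt{\kappa}\|_{\mathcal{C}^4}$ bounded by a polynomial in $\|\theta\|_{\mathcal{C}^4}$ multiplied by $e^{c\|\theta\|_\infty}$ for some universal $c$, and $\kmin^{-1}\leq e^{\|\theta\|_\infty}$. Thus $A(e^\theta)\leq C\bigl(1+\|\theta\|_{\mathcal{C}^4}\bigr)^{k}\exp\bigl(c\|\theta\|_{\mathcal{C}^4}\bigr)$ for some fixed $k,c>0$. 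Fernique's theorem applied to the Gaussian measure $\pi$ on $\mathcal{C}^4$ (quoted in Subsection \ref{sec:ctmprior}) gives $\int e^{\alpha\|\theta\|_{\mathcal{C}^4}^2}d\pi<\infty$ for some $\alpha>0$, which dominates any polynomial-times-exponential growth and yields $\int A(e^\theta)^p\,d\pi<\infty$ for every $p\ge 1$.

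With these two ingredients I would run the standard stability computation. Writing
\begin{align*}
2\dtv(\pi^y,\pi^y_\eps) = \int\left|\frac{e^{-\Phi(\theta)}}{Z}-\frac{e^{-\Phi_\eps(\theta)}}{Z_\eps}\right|d\pi(\theta),
\end{align*}
splitting into the $|e^{-\Phi}-e^{-\Phi_\eps}|/Z$ term and the $|1/Z-1/Z_\eps|e^{-\Phi_\eps}$ term, and using $|e^{-a}-e^{-b}|\leq |a-b|$ together with $|\Phi-\Phi_\eps|\leq \tfrac{1}{2}|\G-\G_\eps|_\Gamma(|\G-\G_\eps|_\Gamma+2|y-\G_\eps|_\Gamma)$, reduces everything to bounding
\begin{align*}
\int |\G(\theta)-\G_\eps(\theta)|_\Gamma\bigl(1+|\G(\theta)|_\Gamma+|\G_\eps(\theta)|_\Gamma\bigr)d\pi(\theta),
\end{align*}
together with a lower bound on $Z$ and $Z_\eps$. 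Boundedness of $\G,\G_\eps$ in terms of $A(e^\theta)\|f\|_{H^3}$ (using Theorem \ref{thm:1} combined with an a-priori estimate for the true solution $u$ in $L^2$, which is again polynomial in $\kmin^{-1}$ and $\|\kappa\|_{\mathcal{C}^1}$) makes the integrand bounded by $C\,A(e^\theta)^2\,\eps^{4\beta-1}$. Fernique integrability from step two and Cauchy--Schwarz then produce the claimed $\OO(\eps^{4\beta-1})$ rate. A uniform lower bound $Z,Z_\eps\geq Z_0>0$ follows by noting that $e^{-\Phi}$, $e^{-\Phi_\eps}$ are $\pi$-a.s.\ strictly positive and $\pi$ assigns positive mass to the set where $\|\theta\|_{\mathcal{C}^4}$ is bounded; details are standard.

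The main obstacle I expect is step two: carefully translating the $\kappa$-dependent prefactor $A(\kappa)$ through the substitution $\kappa=e^\theta$ and verifying that the resulting expression in $\theta$ is dominated by $\exp(c\|\theta\|_{\mathcal{C}^4})$ times a polynomial, so that Fernique's $e^{\alpha\|\theta\|_{\mathcal{C}^4}^2}$ integrability swallows it. Everything else is routine Bayesian-inverse-problem bookkeeping, so the bulk of the work is this bookkeeping on derivatives of $e^\theta$ and $e^{\theta/2}$.
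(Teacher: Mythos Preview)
Your proposal is correct and follows essentially the same route as the paper: the paper likewise splits the total variation distance into the $|Z^{-1}-Z_\eps^{-1}|$ term and the $Z^{-1}\int|e^{-\Phi}-e^{-\Phi_\eps}|\,d\pi$ term, applies the Lipschitz bound $|e^{-a}-e^{-b}|\le|a-b|$, controls $|\G-\G_\eps|$ through Theorem~\ref{thm:1} and $|\G|+|\G_\eps|$ through the stability estimate (Lemma~\ref{lem:stability}), and then invokes Fernique's theorem exactly as you outline to integrate the $\kappa$-dependent prefactor $A(e^\theta)$. The only cosmetic differences are that the paper packages the key integral estimate as a separate lemma (Lemma~\ref{lem:Z}) and obtains the lower bound on $Z_\eps$ from the convergence $|Z-Z_\eps|\to 0$ rather than by your direct argument, but both are equally valid.
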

\begin{proof}
We have 
\begin{align*}
d_{\text{TV}}(\pi^{y},\pi^{y}_{\eps})&  =\int  \left| \frac{1}{Z_{\eps}} \exp\left( -\frac{1}{2} |y-\mathcal{G}_{\eps}(\theta)|_{\Gamma}^2 \right)-\frac{1}{Z} \exp\left(-\frac{1}{2} |y-\mathcal{G}(\theta)|^2_{\Gamma}\right)
 \right| d\pi(\theta) \\
& \leq \int \left|\frac{1}{Z_{\eps}}- \frac{1}{Z}   \right| \exp\left( -\frac{1}{2} |y-\mathcal{G}_{\eps}(\theta)|_{\Gamma}^2 \right) d\pi(\theta)\\
& + \int \frac{1}{Z} \left|\exp\left( -\frac{1}{2} |y-\mathcal{G}_{\eps}(\theta)|_{\Gamma}^2 \right)- \exp\left(-\frac{1}{2} |y-\mathcal{G}(\theta)|^2_{\Gamma}\right) \right| d\pi(\theta)\\
&\leq \left| \frac{1}{Z_{\eps}}-\frac{1}{Z} \right| +\int \frac{1}{Z} \left|\exp\left( -\frac{1}{2} |y-\mathcal{G}_{\eps}(\theta)|_{\Gamma}^2 \right)- \exp\left(-\frac{1}{2} |y-\mathcal{G}(\theta)|^2_{\Gamma}\right) \right| d\pi(\theta)\\
&= \frac{|Z-Z_{\eps}|}{ZZ_{\eps}}+\int \frac{1}{Z} \Big|\exp\left( -\frac{1}{2} |y-\mathcal{G}_{\eps}(\theta)|_{\Gamma}^2 \right)- \exp\left(-\frac{1}{2} |y-\mathcal{G}(\theta)|^2_{\Gamma}\right) \Big| d\pi(\theta).
\end{align*}
Since $Z>0$, by Lemma \ref{lem:Z} below, $Z_{\eps}>\frac{Z}{2}$ for $\eps$ small enough. Hence we have 
\begin{align*}
	d_{\text{TV}}(\pi^y,\pi^y_{\eps}) \leq C \int \Big|\exp\left( -\frac{1}{2} |y-\mathcal{G}_{\eps}(\theta)|_{\Gamma}^2 \right)- \exp\left(-\frac{1}{2} |y-\mathcal{G}(\theta)|^2_{\Gamma}\right) \Big| d\pi(\theta).
\end{align*}
Then using Lemma \ref{lem:Z} again it follows that 
\begin{align*}
	d_{\text{TV}}(\pi^y,\pi^y_{\eps}) \leq C \eps^{4\beta-1},
\end{align*}
where $C$ is independent of $\eps$. 
\end{proof}

The following lemma was used in the proof of Theorem \ref{thm:2}. The proof can be found in the Appendix and makes use of the integrability of $A(\kappa)$ with respect to the prior.
\begin{lemma} \label{lem:Z}
For $\frac{1}{4}<\beta<\frac{1}{2}$ and $\epsilon$ small enough depending on $\beta$, we have 
\begin{align*}
	\int \left|\exp\left( -\frac{1}{2} |y-\mathcal{G}_{\eps}(\theta)|_{\Gamma}^2 \right)- \exp\left(-\frac{1}{2} |y-\mathcal{G}(\theta)|^2_{\Gamma}\right) \right| d\pi(\theta)\leq C \eps^{4\beta-1},
\end{align*}
where $C$ is independent of $\eps$. 
\end{lemma}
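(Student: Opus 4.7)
The plan is to exploit the fact that the map $x\mapsto e^{-x/2}$ is $\tfrac{1}{2}$-Lipschitz on $[0,\infty)$, so the integrand is pointwise bounded by $\tfrac{1}{2}\big||y-\mathcal{G}_\eps(\theta)|_\Gamma^2 - |y-\mathcal{G}(\theta)|_\Gamma^2\big|$. Setting $a=y-\mathcal{G}_\eps(\theta)$ and $b=y-\mathcal{G}(\theta)$ and applying the polarization identity $|a|_\Gamma^2-|b|_\Gamma^2 = \langle a+b,\Gamma^{-1}(a-b)\rangle$ followed by Cauchy--Schwarz, I obtain
\begin{equation*}
\Big|e^{-\tfrac{1}{2}|y-\mathcal{G}_\eps(\theta)|_\Gamma^2} - e^{-\tfrac{1}{2}|y-\mathcal{G}(\theta)|_\Gamma^2}\Big| \leq \tfrac{1}{2}\big|\mathcal{G}_\eps(\theta)+\mathcal{G}(\theta)-2y\big|_\Gamma\,\big|\mathcal{G}_\eps(\theta)-\mathcal{G}(\theta)\big|_\Gamma.
\end{equation*}

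Next I would estimate the two factors separately. For the \emph{difference} factor, boundedness of the linear functionals $\ell_j$ on $L^2$ yields $|\mathcal{G}_\eps(\theta)-\mathcal{G}(\theta)|_\Gamma \leq C\|u_\eps-u\|_{L^2}$, and Theorem \ref{thm:1} then bounds this by $C\,A(\kappa)\|f\|_{H^3}\,\eps^{4\beta-1}$. For the \emph{sum} factor, the standard energy estimate for the elliptic PDE \eqref{eq:pde} gives $\|u\|_{L^2}\leq C\kmin^{-1}\|f\|_{L^2}$, and Lemma \ref{lem:stability} provides the same bound for $u_\eps$; together with $|y|_\Gamma$ being a deterministic constant these yield $|\mathcal{G}_\eps(\theta)+\mathcal{G}(\theta)-2y|_\Gamma \leq C\big(1+\kmin^{-1}\|f\|_{L^2}\big)$. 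Multiplying the two estimates, the pointwise bound becomes $C\,A(\kappa)(1+\kmin^{-1})\,\eps^{4\beta-1}$, with $C$ depending only on $y$, $\|f\|_{H^3}$, $\Gamma$, and $\M$.

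It remains to show that $A(\kappa)(1+\kmin^{-1})$ is $\pi$-integrable, and this is the main technical obstacle: I need to translate the $\kappa$-dependent polynomial expression in Theorem \ref{thm:1} into something controlled by $\|\theta\|_{\mathcal{C}^4}$ so that Fernique can be invoked. Since $\kappa=e^\theta$, the Faà di Bruno formula gives $\|\kappa\|_{\mathcal{C}^k}\leq P_k(\|\theta\|_{\mathcal{C}^k})\,e^{\|\theta\|_{L^\infty}}$ and similarly $\|\sqrt{\kappa}\|_{\mathcal{C}^4}\leq P_4(\|\theta\|_{\mathcal{C}^4})\,e^{\tfrac{1}{2}\|\theta\|_{L^\infty}}$ for polynomials $P_k$. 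Moreover $\kmin^{-1}\leq e^{\|\theta\|_{L^\infty}}$, so every power $\kmin^{-j}$ appearing in $A(\kappa)$ contributes a factor $e^{j\|\theta\|_{L^\infty}}$. Inspecting the closed form of $A(\kappa)$, each of these ingredients is absorbed into a bound of the form $A(\kappa)(1+\kmin^{-1})\leq C\exp\bigl(C\|\theta\|_{\mathcal{C}^4}\bigr)$ for constants $C$ independent of $\eps$.

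Finally, Fernique's theorem (stated in Subsection \ref{sec:ctmprior}) gives $\int e^{\alpha\|\theta\|_{\mathcal{C}^4}^2}d\pi(\theta)<\infty$ for some $\alpha>0$; combining this with the elementary inequality $Cx\leq \alpha x^2 + C^2/(4\alpha)$ yields $\int e^{C\|\theta\|_{\mathcal{C}^4}}d\pi(\theta)<\infty$. Integrating the pointwise bound obtained above then produces $\int|\cdot|\,d\pi\leq C\eps^{4\beta-1}$, completing the proof. The only delicate bookkeeping lies in verifying the exponential majorant for $A(\kappa)(1+\kmin^{-1})$, which is precisely why Theorem \ref{thm:1} was stated with an explicit tracking of $\kappa$-dependence rather than an abstract constant.
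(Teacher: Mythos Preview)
Your proposal is correct and follows essentially the same route as the paper's proof: Lipschitz continuity of the exponential, a polarization-type expansion of the quadratic difference, Theorem~\ref{thm:1} for the difference factor, the stability bound of Lemma~\ref{lem:stability} (together with the elliptic energy estimate) for the sum factor, chain-rule bounds to convert $\kappa$-norms into $\theta$-norms, and Fernique's theorem for the final integrability. The only cosmetic differences are that you work with the $\Gamma$-weighted norm throughout and absorb the polynomial-times-exponential bound on $A(\kappa)(1+\kmin^{-1})$ into a single exponential $e^{C\|\theta\|_{\mathcal{C}^4}}$, whereas the paper keeps the polynomial prefactors explicit; neither changes the substance of the argument.
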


\begin{remark}
Similarly as for Lemma \ref{lem:operator} our proof fails when $\beta=\frac{1}{2}$. However one can choose $\beta$ arbitrarily close to $\frac{1}{2}$ so that the rate in \ref{thm:2} is essentially $O(\eps)$.
\end{remark}

\section{Numerical Experiments}\label{sec:numerics}

In this section we investigate numerically the point cloud formulation of the inverse problem introduced in Subsection \ref{ssec:pointcloudellipticip}. We start in Subsection \ref{ssec:implementation} by considering some aspects of the implementation. Then in Subsections \ref{ssec:ex1}, \ref{ssec:ex2}, and \ref{ssec:ex3} we give three numerical examples, where the underlying manifold is chosen to be an ellipse, the torus, and a cow-shaped manifold. In Subsection \ref{sec:HB}, we study a hierarchical approach where the prior length-scale parameter is learned from data.

\subsection{Implementation} \label{ssec:implementation}
When it comes to practical applications, care must be taken when one chooses the parameters. Central in our kernel method is the parameter $\eps$.  While Theorem \ref{thm:1} characterizes the error in approximating $\L^{\kappa}$ with $\L^{\kappa}_{\eps}$ and suggests the consistency of the estimator as $\eps$ goes to 0, in practice one cannot take $\epsilon$ too small as we explain now. One can indeed establish the consistency of $L^{\kappa}_{\eps,n}$ with $\L^{\kappa}_{\eps}$, using the same discrete estimation technique as in \cite{berry2016variable,bs:16,gilani2019approximating}. We should point out that while the resulting discrete error bound in \cite{berry2016variable,bs:16,gilani2019approximating} does not show any dependence on $\kappa$, which is needed for proving the convergence of the discrete posterior density estimate in \eqref{eq:discretepostdensity} to \eqref{eq:posteriorA}, this result is sufficient for understanding the consistency of $L^{\kappa}_{\eps,n}$ with $\L^{\kappa}_{\eps}$. Specifically, for point cloud with distribution characterized by density $q(x)$, defined with respect to the volume form inherited by $\mathcal{M}\subset\mathbb{R}^d$, the discrete estimate for fixed-bandwidth Gaussian kernel (e.g., see Corollary~1 of \cite{berry2016variable} with $\alpha=1/2, \beta=0$ in their setup) states that the sampling error for obtaining an order-$\epsilon^2$ of the density $q$ with $q_\epsilon$ is of order $\mathcal{O}(q(x_i)^{1/2}n^{-1/2}\epsilon^{-(2+m/4)})$ and the error between $L^{\kappa}_{\eps,n}$ and $\L^{\kappa}_{\eps}$ is of order $n^{-1/2}\epsilon^{-(1/2+m/4)}$. The fact that $\epsilon$ appears in the denominator of these estimates suggests that one cannot take $\epsilon$ too small in practice and it also implies that $\epsilon$ should be adequately scaled with the size of the data, $n$. Since a direct use of these estimates requires knowing the constants that depend on the volume of $\mathcal{M}$ that are difficult to estimate, we instead adopt an automated empirical method for choosing $\epsilon.$ Precisely, we follow \cite{coifman2008graph} and plot 
$$T(\eps):=\sum_{i,j} \exp\left(-\frac{|x_i-x_j|^2}{4\eps}\right)$$
as a function of $\eps$ and choose $\eps$ to be in the region where $\log\big(T(\eps)\bigr)$ is approximately linear.

In the following three subsections we demonstrate the local kernel method for solving inverse problems through three numerical examples. In the first two examples, the embeddings are known and we set the model analytically, i.e., we first choose the ground truth $\kappa^{\dagger}$ and $u^{\dagger}$, and then compute the corresponding $f$ as 
\begin{align}\label{eq:RHS}
f = \text{div} (e^{\theta} \nabla u) = \frac{1}{\sqrt{\text{det}g}} \partial_i \left( e^{\theta}  g^{ij} \sqrt{\text{det} g}\partial_j u\right) ,
\end{align}
where $g$ is the Riemannian metric on $\mathcal{M}$. The third example will be an artificial surface where the embedding is unknown. We will then generate the truth using our kernel method. 
We will use the pCN algorithm to sample from the posterior \cite{cotter2013,bertozzi2018uncertainty,trillos2017consistency}. This is a Metropolis-Hastings algorithm with proposal mechanism to move from $u_n$ to $u_n^*$ given by 
\begin{equation}\label{eq:pcnkernel}
u_n^\star \sim \left( 1 - \beta^{2} \right)^{1/2}u_n + \beta \xi^{(\sam)}, 
\quad \xi\sim \pi_n =\mathcal{N}(0, C_{\tau,s}^n),
\end{equation}
where $\beta \in (0,1)$ is a tuning parameter. Note that if $u_n\sim \pi_n$ then $u_n^* \sim \pi_n$ showing that the prior is invariant for this kernel. Moreover, it is not difficult to see that detailed balance holds, and as a consequence the Metropolis-Hastings accept/reject mechanism involves only evaluation of the likelihood function. The advantage of pCN in our setting over a standard random walk or Langevin algorithm is that the rate of convergence of pCN does not deteriorate with $n;$ this has been established rigorously for a linear inverse problem in \cite{trillos2017consistency}.

\begin{remark}\label{rmk:compcost}
At each iteration of the MCMC algorithm, the forward map involves an eigenvalue decomposition of a different matrix for different $\theta$'s as shown in Subsection \ref{ssec:graphposterior}. Hence large $n$'s are not favored for computational purposes and this can be an issue for high dimensional $\M$'s where the number of points grow as $n^m$ if one discretizes each dimension by $n$.  
\end{remark}

\subsection{One-Dimensional Elliptic Problem on an Unknown Ellipse}\label{ssec:ex1}
In this subsection we take $\mathcal{M}$ to be an ellipse with semi-major and semi-minor axis of length $a=3$ and 1, embedded through 
\begin{align}
\iota (\omega) = (\cos\omega,a\sin\omega)^T, \quad \omega \in [0,2\pi], \label{eq:ellipse}
\end{align}
and the Riemannian metric is 
\begin{align*}
g_{11}(\omega) = \sin^2\omega + a^2 \cos^2\omega. 
\end{align*}
The truth is set to be 
\begin{align*}
\kappa^{\dagger}& = 2 +\cos\omega, \quad u^{\dagger} =\cos\omega,  
\end{align*}
and right-hand side $f$ in equation \eqref{eq:pde} is defined through equation \eqref{eq:RHS}. One can check that both $u^{\dagger}$ and $f$ have mean zero, i.e., $\int_0^{2\pi} u^{\dagger}(\omega) \sqrt{g_{11}(\omega)} d\omega=\int_0^{2\pi} f(\omega) \sqrt{g_{11}(\omega)}d\omega=0$. We generate the point cloud $\{x_1,\ldots,x_n\}$ according to \eqref{eq:ellipse} from a uniform grid of $\omega$ of size $n=400$. The observations are given as noisy pointwise evaluations at subsets of the point cloud:
\begin{align*}
\ell_j=u(x_j)+\eta_j, \quad \quad j=1,\ldots,J,
\end{align*} 
where  $\eta_j \sim \mathcal{N}(0,\sigma^2)$ are assumed to be independent. We will take $J=100$, 200, 400 respectively with noise size varying as $\sigma=0.01, 005, 0.1$. As discussed in Subsection \ref{ssec:graphprior}, we construct the prior with a self-tuning graph Laplacian, using $k=2$ neighbors. We empirically discover that such choice of $k$ gives the best spectral approximation towards the Laplace-Beltrami operator on the ellipse, which has spectrum $\{i^2\}_{i=0}^{\infty}$ with eigenfunctions $\{\sin(i\omega),\cos(i\omega)\}_{i=0}^{\infty}$. We also tune empirically the parameters in \eqref{eq:graphprior} as $\tau=0.05$ and $s=4$.

In Figure \ref{figure:smoothE}, we plot the posterior means as functions of $\omega\in [0,2\pi]$ and the 95\% credible intervals for different $\sigma$ and $J$'s. While the point cloud Bayesian solution is only defined at the discrete point cloud, to ease the visualization we represent the outcome as continuous functions defined on $\omega\in [0,2\pi]$.
We see that the truth is mostly captured in the Bayesian confidence intervals. To quantify the error of reconstruction, we compute the relative $\ell_2$ distances between the posterior mean $\bar{\kappa}$ and the truth $\kappa^{\dagger}$. Moreover, we compute the  solution $\bar{u}$ of \eqref{eq:pdeM} with $\bar{\kappa}$ and its relative $\ell_2$ distance to the true solution $u^{\dagger}$. As shown in Table \ref{table:smoothE},  the reconstruction error for $u^{\dagger}$ is much smaller than the relative noise level defined as $\sqrt{n}\sigma/\|u^{\dagger}\|_2$.

\begin{table}[h!]
\centering
\begin{tabular}{ |c|c|c|c|c|c|c|c|c|c| } 
 \hline
 \multicolumn{1}{|c|}{$\sigma$} & \multicolumn{3}{c|}{ 0.01} &    \multicolumn{3}{c|}{ 0.05}  &  \multicolumn{3}{c|}{ 0.1} \\
 \hline
 $J$ &100 &200 & 400 & 100 & 200 & 400 & 100 & 200 & 400\\ 
 \hline
 $\|\bar{\kappa}-\kappa^{\dagger}\|_{2}/\|\kappa^{\dagger}\|_2$ & 0.60\% & 0.80\% &0.62\% & 2.85\% & 1.96\% & 2.18\% & 5.46\% & 3.90\%  & 3.45\%\\ 
\hline
 $\|\bar{u}-u^{\dagger}\|_{2}/\|u^{\dagger}\|_2$ & 0.26\% & 0.23\% &0.23\%  & 1.08\% & 0.83\%& 0.90\% & 1.70\% & 1.37\% & 1.70\%\\ 
 \hline
  \multicolumn{1}{|c|}{$\sqrt{n}\sigma/\|u^\dagger\|_2$} & \multicolumn{3}{c|}{ 1.41\%} &    \multicolumn{3}{c|}{7.07\%}  &  \multicolumn{3}{c|}{14.14\%}\\
  \hline
\end{tabular}
\caption{Relative error of $\bar{\kappa}$ and $\bar{u}$ for different noise level, $\sigma$'s and number of observations, $J$. In the last row, the relative noise level  for each $\sigma$ is reported for diagnostic purposes. Particularly, note that the reconstruction error for $u^\dagger$ is much smaller than the relative noise level.}
\label{table:smoothE}
\end{table}


\begin{figure}[!htb]
\minipage{1\textwidth}
\minipage{0.3333\textwidth}
\subcaption{$\sigma=0.01, J=100$.}\vspace{-8pt}
  \includegraphics[width=\linewidth]{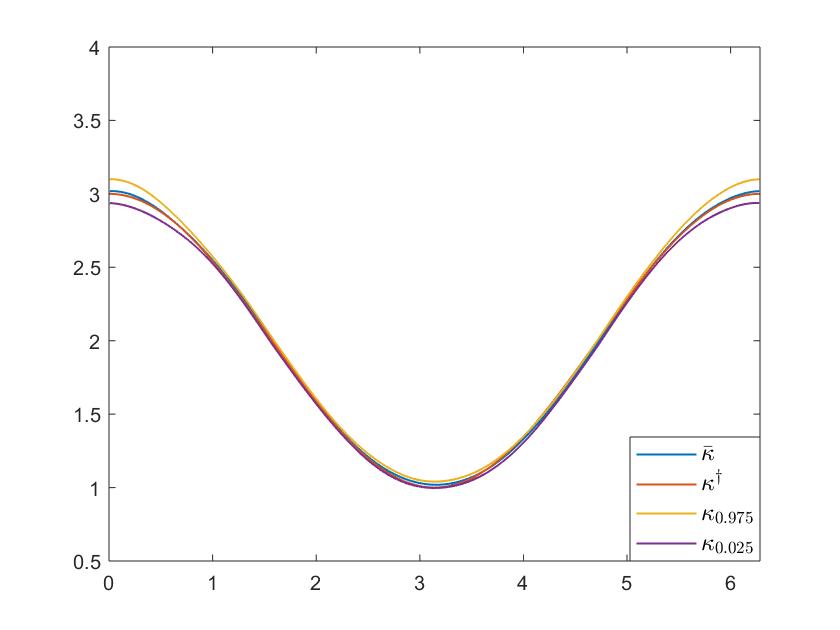}
\label{fig:awesome_image1}
\endminipage\hfill
\minipage{0.3333\textwidth}
\subcaption{$\sigma=0.01, J=200$.}\vspace{-8pt}
  \includegraphics[width=\linewidth]{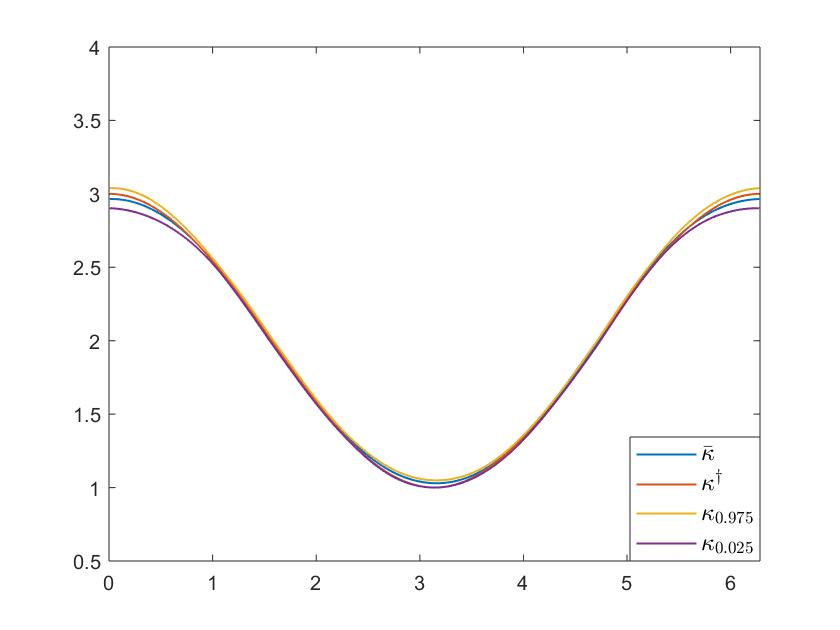}
\label{fig:awesome_image2}
\endminipage\hfill
\minipage{0.3333\textwidth}
\subcaption{$\sigma=0.01, J=400$.}\vspace{-8pt}
  \includegraphics[width=\linewidth]{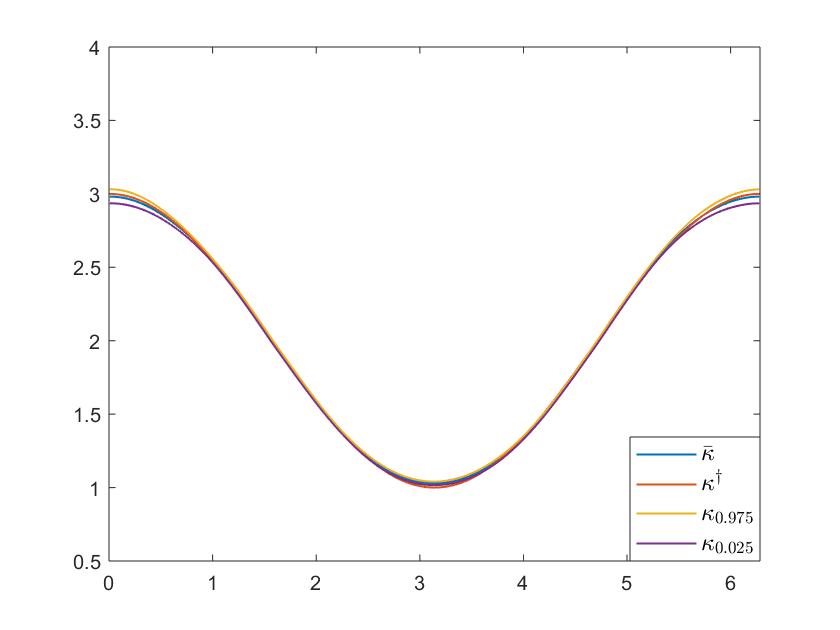}
\label{fig:awesome_image2}
\endminipage
\endminipage\hfill
\minipage{1\textwidth}
\minipage{0.3333\textwidth}
\subcaption{$\sigma=0.05, J=100$.}\vspace{-8pt}
  \includegraphics[width=\linewidth]{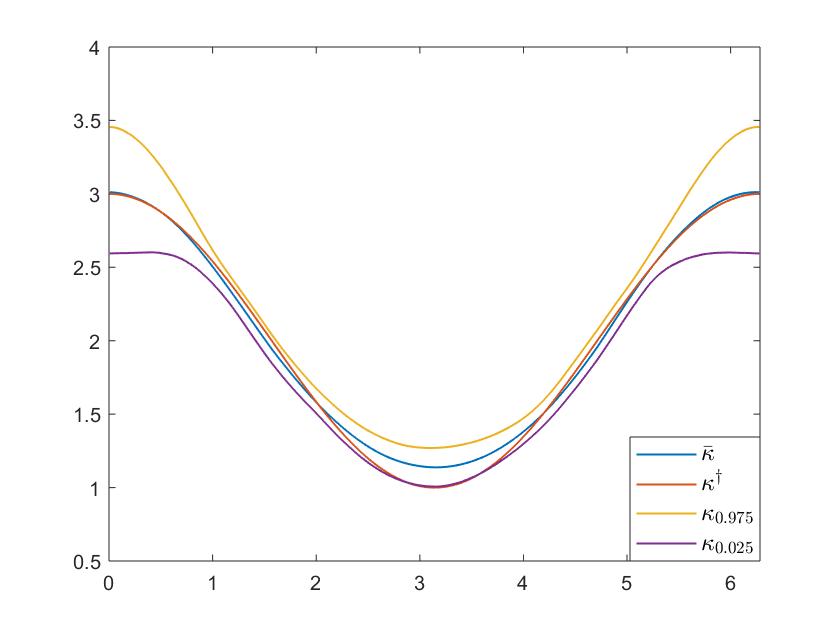}
\label{fig:awesome_image2}
\endminipage\hfill
\minipage{0.3333\textwidth}
\subcaption{$\sigma=0.05, J=200$.}\vspace{-8pt}
  \includegraphics[width=\linewidth]{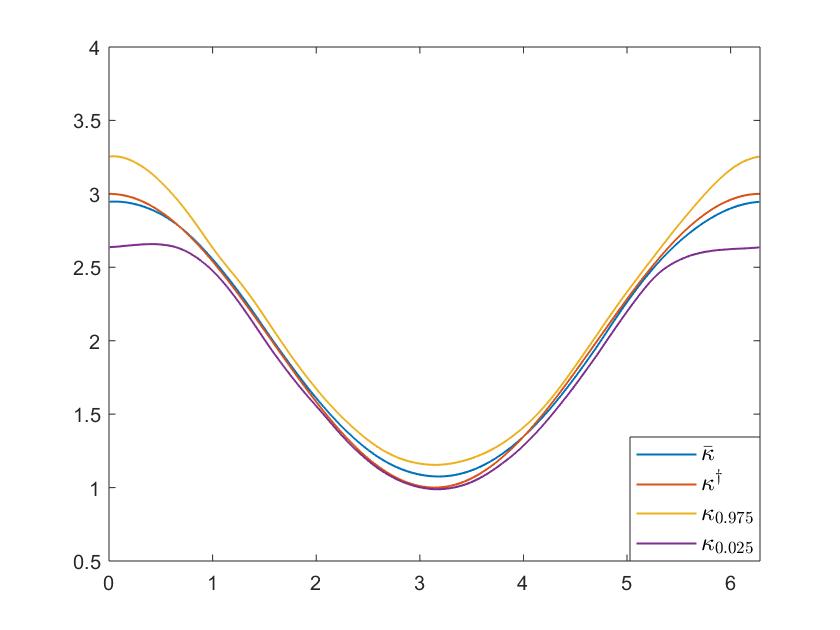}
\label{fig:awesome_image2}
\endminipage\hfill
\minipage{0.3333\textwidth}
\subcaption{$\sigma=0.05, J=400$.}\vspace{-8pt}
  \includegraphics[width=\linewidth]{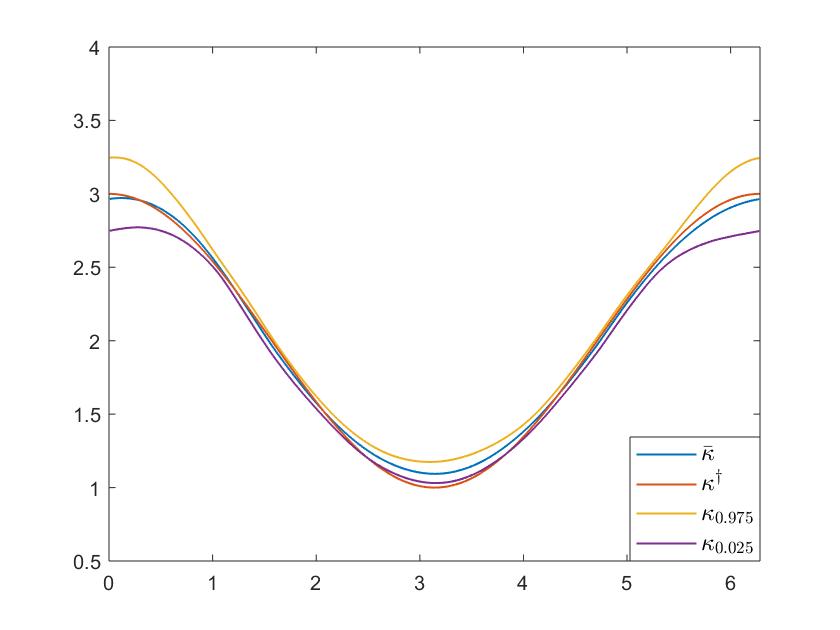}
\label{fig:awesome_image2}
\endminipage
\endminipage\hfill
\minipage{1\textwidth}
\minipage{0.3333\textwidth}
\subcaption{$\sigma=0.1, J=100$.}\vspace{-8pt}
  \includegraphics[width=\linewidth]{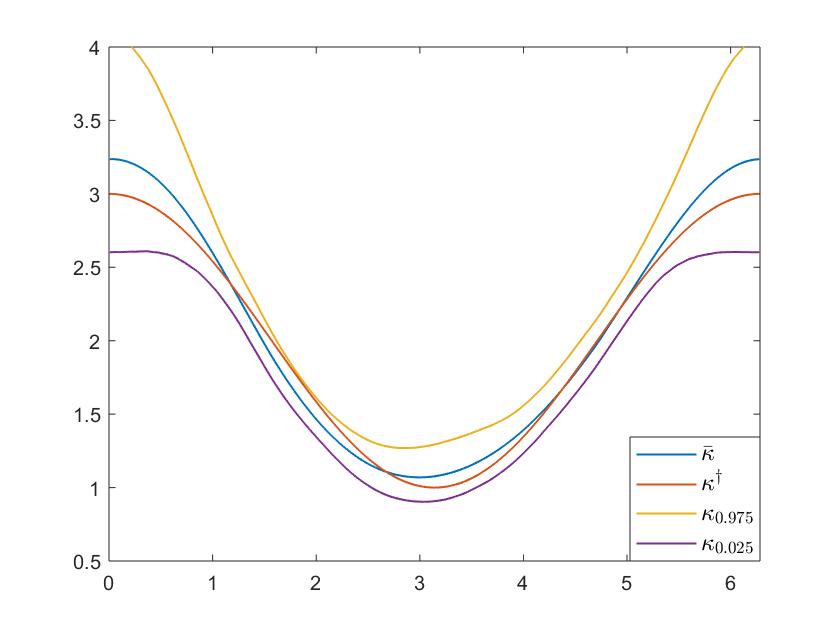}
\label{fig:awesome_image2}
\endminipage\hfill
\minipage{0.3333\textwidth}
\subcaption{$\sigma=0.1, J=200$.}\vspace{-8pt}
  \includegraphics[width=\linewidth]{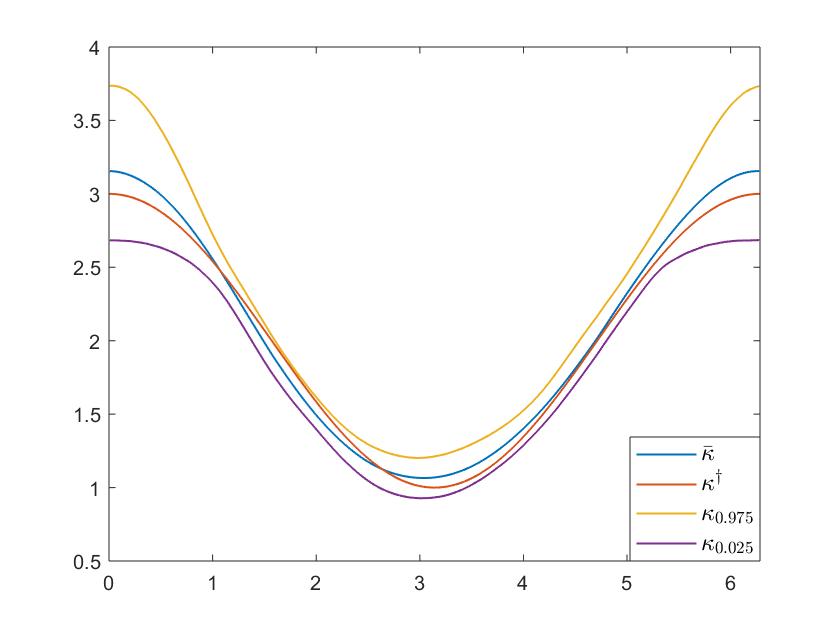}
\label{fig:awesome_image2}
\endminipage\hfill
\minipage{0.3333\textwidth}
\subcaption{$\sigma=0.1, J=400$.}\vspace{-8pt}
  \includegraphics[width=\linewidth]{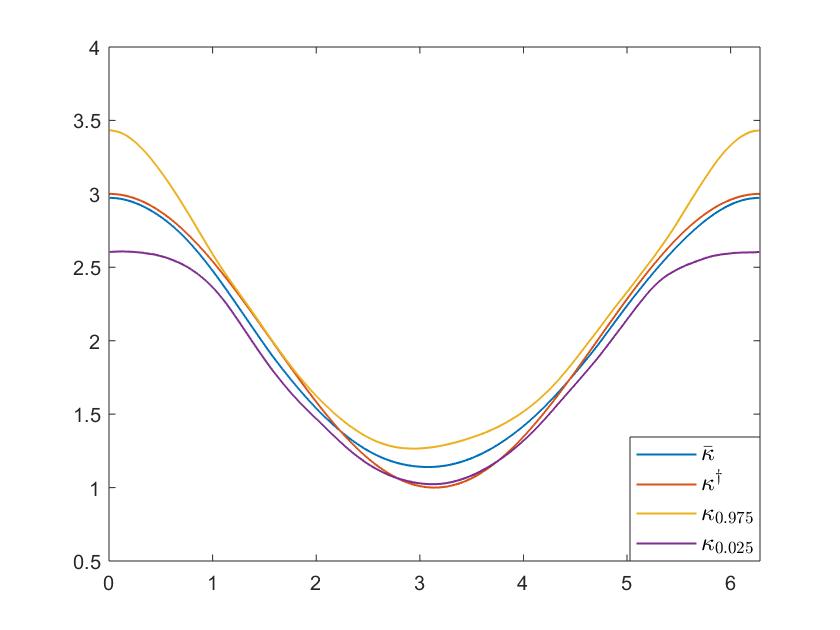}
\label{fig:awesome_image2}
\endminipage
\endminipage\hfill 
\vspace{-20pt}
\caption{Posterior means and 95\% credible intervals for different $\sigma$'s and $J$'s. Here $\kappa_{0.025}$ and $\kappa_{0.975}$ represent the 2.5\% and 97.5\% posterior quantiles respectively.}
\label{figure:smoothE}
\end{figure}


\begin{remark}\label{rmk:limitprior}
Since our prior is on $\theta = \log(\kappa)$, we are actually approximating $\log(2+\cos\omega)$ with trigonometric functions and hence the truth $\kappa^{\dagger}$ is not simply the combination of the first two eigenfunctions in the prior. In other words, although the truth $\kappa^{\dagger}$ is in the support of the prior, the fact that its coordinates in the eigenbasis do not decay like that in the expansion  \eqref{eq:sample} makes it difficult to reconstruct. 
\end{remark}
Regarding Remark \ref{rmk:limitprior}, we consider another prior with self-tuning graph constructed with $k=0.2n$ points. This new graph Laplacian gives a worse spectral approximation to the Laplace Beltrami operator in the underlying manifold, as its spectrum saturates instead of growing at the appropriate rate. In other words, the basis functions associated with the high frequencies will be given more weight in the expansion \eqref{eq:sample}. This can be beneficial in practical applications since it effectively enlarges the support of the prior. Below in Figure \ref{figure:roughE}, we solve the inverse problem using this new prior in the case $\sigma=0.1$. The parameters are tuned empirically: $\tau=0.75$, $s=8$. It can be seen that although the reconstructions are rougher than those in Figure \ref{figure:smoothE}, they capture better the shape of $\kappa^{\dagger}$, with the help of the higher frequencies. Essentially, larger $\tau$ (corresponds to more nontrivial modes in the representation in \eqref{eq:sample}) gives less bias but larger variance, which is consistent with the theory of nonparameteric statistical estimation (e.g, Section~1.7 of \cite{tsybakov2008introduction}).

\begin{figure}[!htb]
\minipage{1\textwidth}
\minipage{0.3333\textwidth}
\subcaption{$\sigma=0.1, J=100$.}\vspace{-8pt}
  \includegraphics[width=\linewidth]{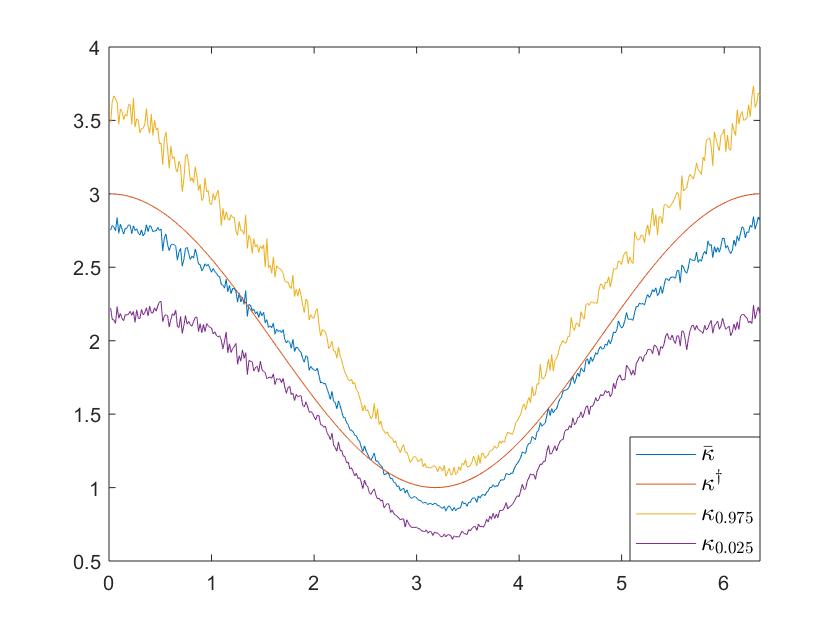}
\label{fig:awesome_image2}
\endminipage\hfill
\minipage{0.3333\textwidth}
\subcaption{$\sigma=0.1, J=200$.}\vspace{-8pt}
  \includegraphics[width=\linewidth]{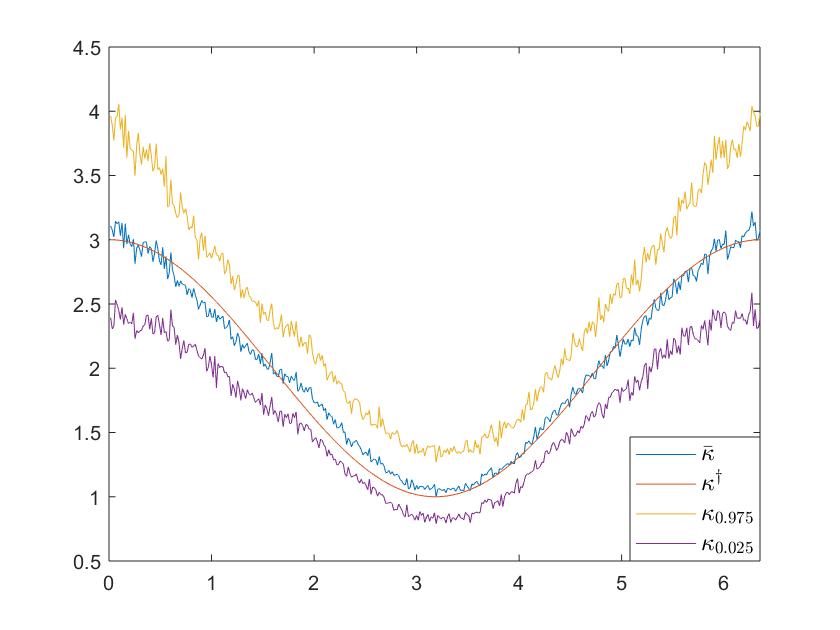}
\label{fig:awesome_image2}
\endminipage\hfill
\minipage{0.3333\textwidth}
\subcaption{$\sigma=0.1, J=400$.}\vspace{-8pt}
  \includegraphics[width=\linewidth]{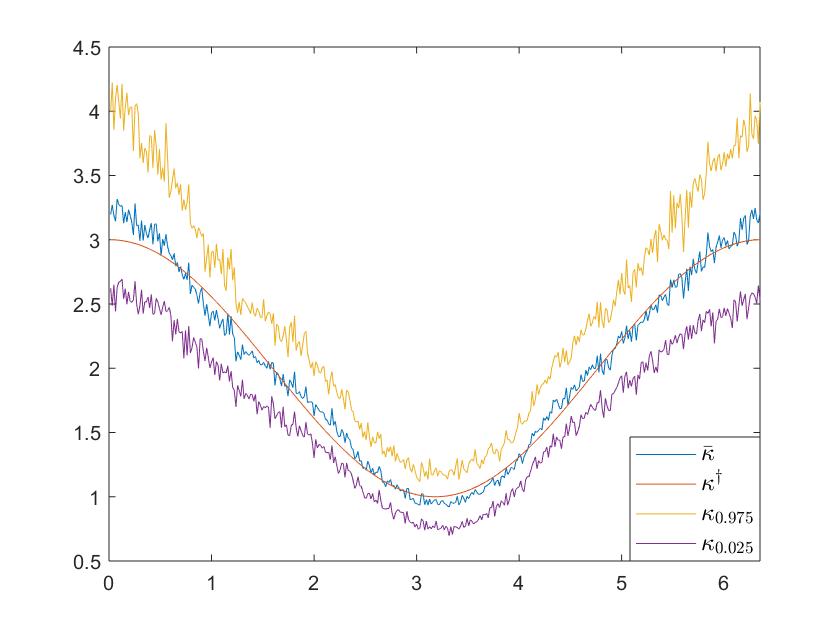}
\label{fig:awesome_image2}
\endminipage
\endminipage\hfill
\vspace{-20pt}
\caption{Posterior means and 95\% credible intervals for $\sigma=0.1$ and different $J$'s. Here $\kappa_{0.025}$ and $\kappa_{0.975}$ represent the 2.5\% and 97.5\% posterior quantiles respectively. }
\label{figure:roughE}
\end{figure}
\begin{remark}\label{rmk:illpose}
We note that the inexact reconstruction is partially due to the ill-posedness of the elliptic inverse problem \cite{mueller2012linear}. As can be seen in Table \ref{table:smoothE}, the reconstruction error for $\bar{\kappa}$ is much larger than that for $\bar{u}$: a wide range of $\kappa$'s around $\kappa^{\dagger}$ give solutions $u$ which are ``close enough'' to $u^{\dagger}$ (within a range of order $\sigma$) that the algorithm cannot distinguish.  When $\sigma$ is large, such tolerance is larger and the inverse problem becomes more difficult. This issue, together with Remark \ref{rmk:limitprior}, explains why one cannot expect exact recovery of $\kappa^{\dagger}$ as seen in Figure \ref{figure:smoothE}.  
\end{remark}

\subsection{Two-Dimensional Elliptic Problem on an Unknown Torus}\label{ssec:ex2}
In this subsection we take $\mathcal{M}$ to be $\mathbb{T}^2$ embedded in $\mathbb{R}^3$ through 
\begin{align}
\iota(\omega_1,\omega_2) = \big((2+\cos \omega_1) \cos \omega_2, (2+\cos \omega_1) \sin \omega_2, \sin \omega_1\big)^T, \quad \omega_1,\omega_2 \in [0,2\pi], \label{eq:torus}
\end{align} 
and the Riemannian metric is 
\begin{align*}
g(\omega_1,\omega_2) = 
\begin{bmatrix}
1& 0 \\
0& (2+\cos\omega_1)^2
\end{bmatrix}.
\end{align*}
The truth is set to be
\begin{align*}
\kappa^{\dagger} (\omega_1,\omega_2) =2+ \sin \omega_1 \sin\omega_2,\quad u^{\dagger}=\sin \omega_1 \sin \omega_2,
\end{align*}
and $f$ is again specified through \eqref{eq:RHS}. One can check that both $u^{\dagger}$ and $f$ have mean zero, i.e., $\int u^{\dagger} \sqrt{\text{det}g}=\int f \sqrt{\text{det}g}=0$. 
For computational reasons as in Remark \ref{rmk:compcost}, we generate the point cloud according to \eqref{eq:torus} from a 20$\times$20 uniform grid on $[0,2\pi]\times[0,2\pi]$ and the observations are given as noisy pointwise evaluations at all points. The graph Laplacian for the prior is constructed with $k=16$ neighbors and again we empirically tune the parameters: $\tau=0.08$ and $s=6$. Unlike the ellipse case, we cannot get almost exact spectral approximation given that we are only discretizing each dimension by 20 points. For this reason, we need a large $s$ to ensure sufficient decay of the spectrum.  In Figure \ref{figure:torus}, we plot the posterior means and the standard deviations as functions on $[0,2\pi]\times[0,2\pi]$; we note that the uncertainty is large when the function $\sin\omega_1\sin\omega_2$ crosses 0.  Table \ref{table:torus} quantifies the reconstruction error as usual and the reconstruction error for $u^{\dagger}$ is again much smaller than the noise level, which are $2\%,10\%$, and $20\%$ respectively. However, the reconstruction error for $u^{\dagger}$ decreases with $\sigma$ decreasing while the error for $\kappa^{\dagger}$ does the opposite, a manifestation of the ill-posedness.

\begin{table}[h!]
\centering
\begin{tabular}{ |c|c|c|c| } 
 \hline
$\sigma$ &  0.01 &     0.05  &   0.1 \\ \hline
 $\|\bar{\kappa}-\kappa^{\dagger}\|_{2}/\|\kappa^{\dagger}\|_2$ &8.56\% &8.34\% & 6.94\% \\ 
 \hline
 $\|\bar{u}-u^{\dagger}\|_{2}/\|u^{\dagger}\|_2$ & 1.8\% & 2.8\% &4.8\% \\ \hline
 $\sqrt{n}\sigma/\|u^\dagger\|_2$ & 2\% & 10\% & 20\% \\ \hline
\end{tabular}
\caption{Relative error of $\kappa^{\dagger}$ and $\bar{u}$ for different noise level, $\sigma$'s. In the last row, the relative noise level is reported for diagnostic purpose. Particularly, note that the reconstruction error for $u^\dagger$ is much smaller than the relative noise level.}
\label{table:torus}
\end{table}


\begin{figure}[!htb]
\minipage{1\textwidth}
\minipage{0.3333\textwidth}
\subcaption{$\sigma=0.01$ (mean)}\vspace{-8pt}
  \includegraphics[width=\linewidth]{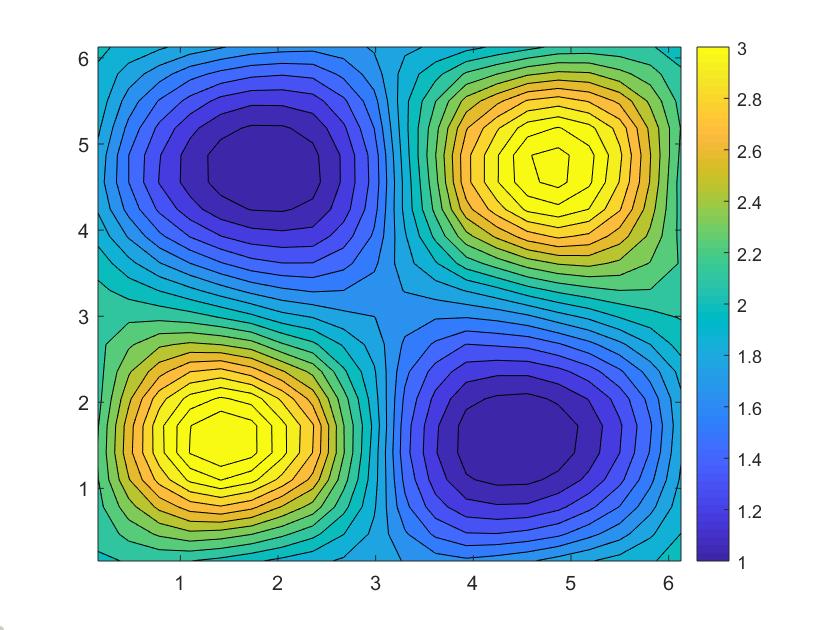}
\label{fig:awesome_image2}
\endminipage\hfill
\minipage{0.3333\textwidth}
\subcaption{$\sigma=0.05$ (mean)}\vspace{-8pt}
  \includegraphics[width=\linewidth]{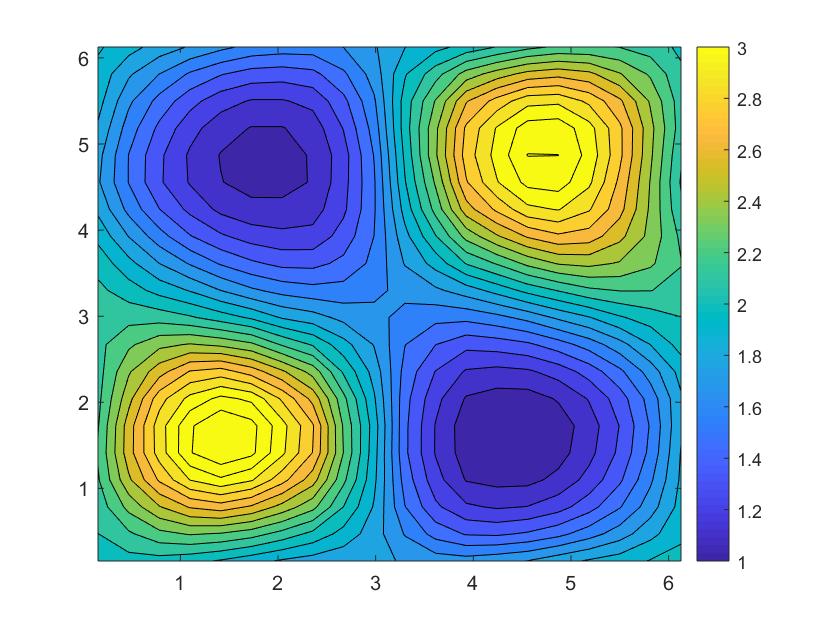}
\label{fig:awesome_image2}
\endminipage\hfill
\minipage{0.3333\textwidth}
\subcaption{$\sigma=0.1$ (mean)}\vspace{-8pt}
  \includegraphics[width=\linewidth]{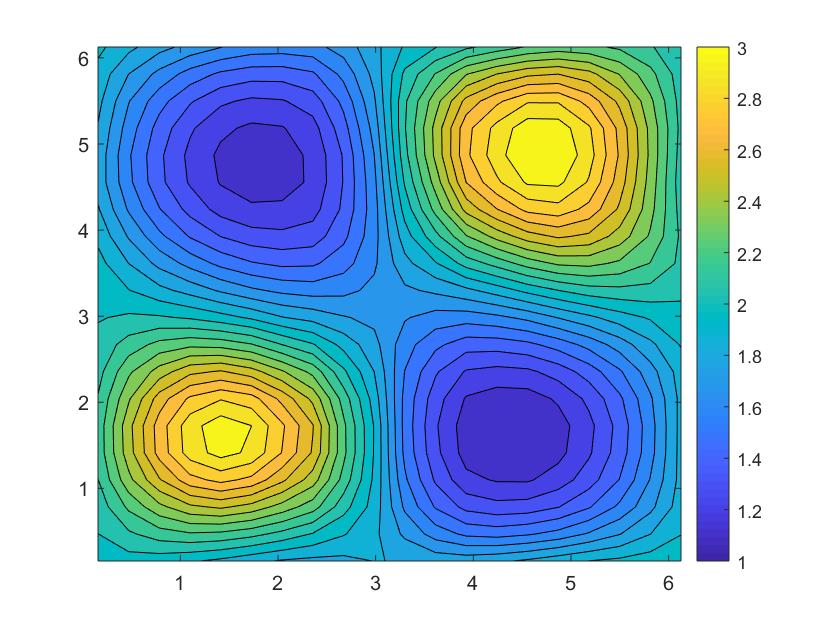}
\label{fig:awesome_image2}
\endminipage
\endminipage\hfill

\minipage{1\textwidth}
\minipage{0.3333\textwidth}
\subcaption{$\sigma=0.01$ (std)}\vspace{-8pt}
  \includegraphics[width=\linewidth]{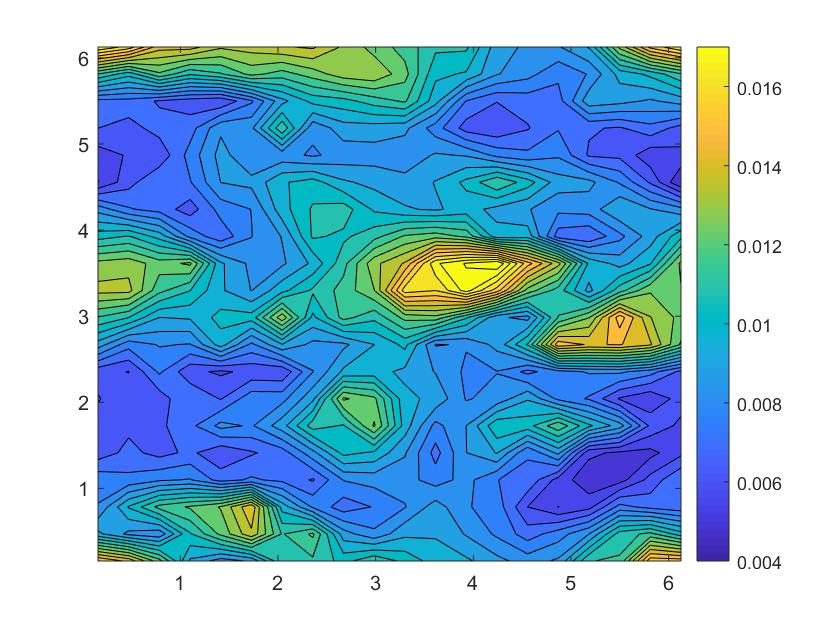}
\label{fig:awesome_image2}
\endminipage\hfill
\minipage{0.3333\textwidth}
\subcaption{$\sigma=0.05$ (std)}\vspace{-8pt}
  \includegraphics[width=\linewidth]{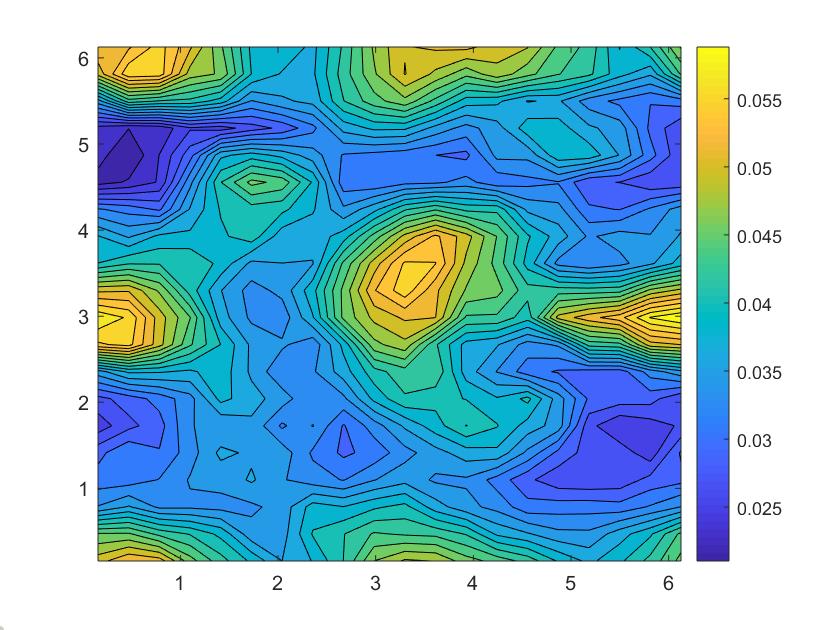}
\label{fig:awesome_image2}
\endminipage\hfill
\minipage{0.3333\textwidth}
\subcaption{$\sigma=0.1$ (std)}\vspace{-8pt}
  \includegraphics[width=\linewidth]{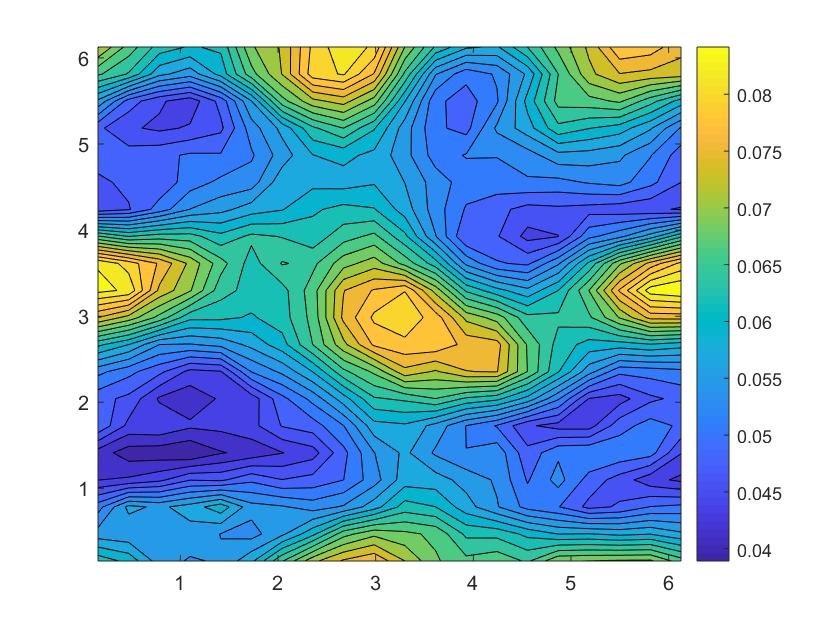}
\label{fig:awesome_image2}
\endminipage
\endminipage\hfill
\vspace{-20pt}
\caption{Posterior means (first row) and standard deviations (second row) of posteriors for different $\sigma$'s. horizontally. We have extended by interpolation the point cloud solution in order to ease visualization.}
\label{figure:torus}
\end{figure}

\begin{figure}[!htb]
\minipage{1\textwidth}
\centering
\minipage{0.4\textwidth}
\subcaption{Posterior mean $\bar{\kappa}$.}\vspace{-7pt}
  \includegraphics[width=\linewidth]{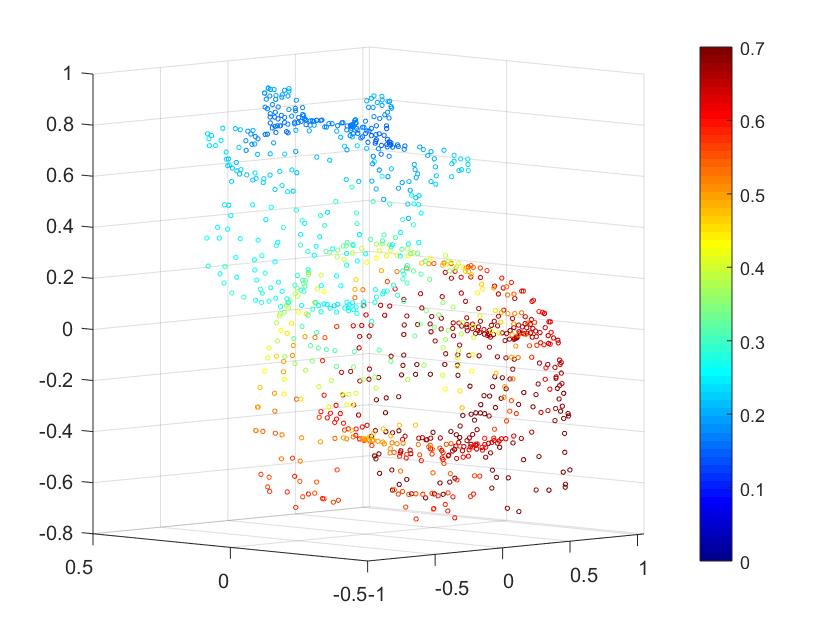}
\label{fig:awesome_image2}
\endminipage
\minipage{0.4\textwidth}
\subcaption{Truth $\kappa^{\dagger}|_X$.}\vspace{-7pt}
  \includegraphics[width=\linewidth]{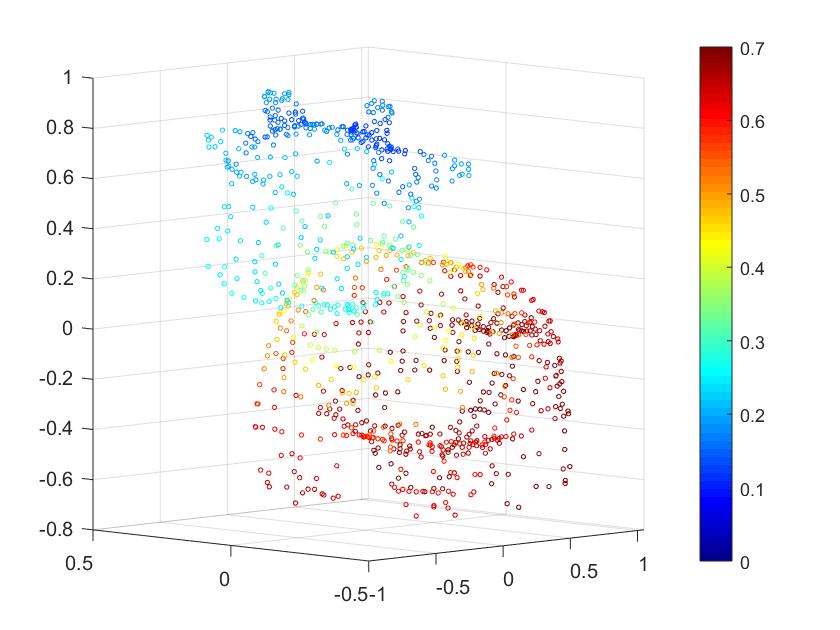}
\label{fig:awesome_image2}
\endminipage
\endminipage\hfill
\minipage{1\textwidth}
\centering
\minipage{0.4\textwidth}
\subcaption{Error  $\left|\bar{\kappa}-\kappa^{\dagger}|_X\right|$.}\vspace{-5pt}
  \includegraphics[width=\linewidth]{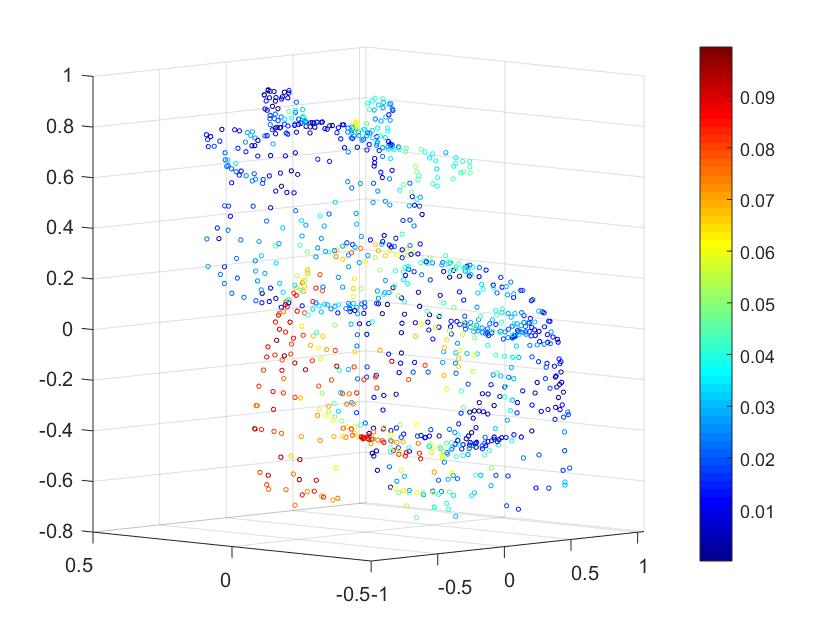}
\label{fig:awesome_image2}
\endminipage
\minipage{0.4\textwidth}
\subcaption{Standard deviation.}\vspace{-7pt}
  \includegraphics[width=\linewidth]{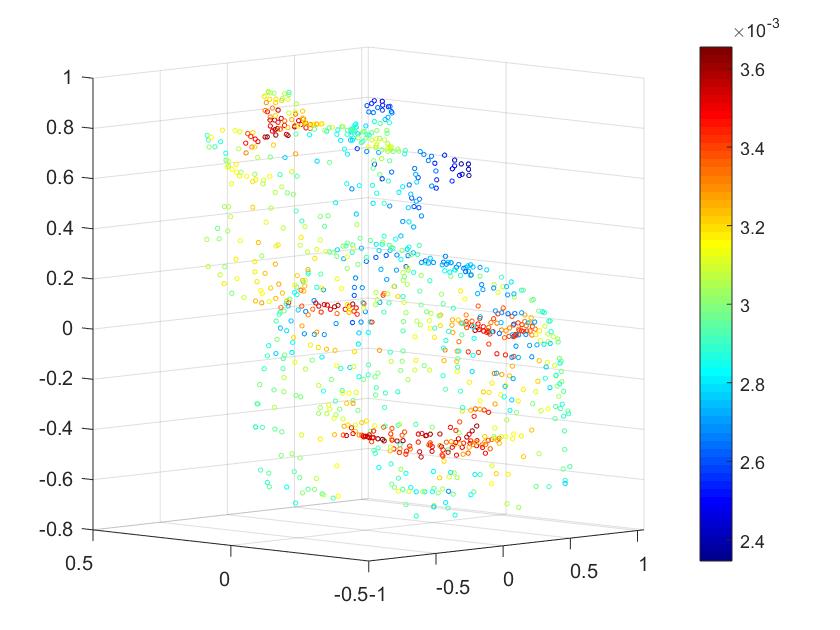}
\label{fig:awesome_image2}
\endminipage
\endminipage\hfill
\vspace{-20pt}
\caption{Reconstruction for the cow-shaped manifold.}
\label{figure:cow}
\end{figure}

\begin{remark}
For this example we solved equation \eqref{eq:pdeM} by taking the pseudoinverse $\hat{u}_n=(L^{\kappa}_{\eps,n})^{\dagger} f_n$ instead, as this is numerically more stable than taking the eigenvalue decomposition of the asymmetric matrix $L^{\kappa}_{\eps,n}$. Moreover, $\hat{u}_n$ is consistent with $u^{\dagger}$ for this specific problem as explained below. We have that $\hat{u}_n$ solves the following problem:
\begin{align}
	\hat{u}_n = \text{min} \left\{\|u\|_2: u \in \operatorname{argmin} \|L^{\kappa}_{\eps,n}u-f_n\|_2 \right\}. \label{eq:svd}
\end{align}
The fact that $f$ has zero mean implies $\langle f_n, 1\rangle_q=0$ in the large $n$ limit. Then equation \eqref{eq:pdeM} has a strong solution as mentioned in Subsection \ref{ssec:graphposterior} and so the characterization \eqref{eq:svd} implies that $\hat{u}_n$ is also a strong solution, with $\sum_{i=1}^n \hat{u}^i_n=0$.  Notice that the truth $u^{\dagger}$ also satisfies $\int u^{\dagger} =0$ and hence makes $\hat{u}_n$ consistent. 
\end{remark}

\subsection{Two-Dimensional Elliptic Problem on an Unknown Artificial Surface}\label{ssec:ex3}
In this subsection we consider an artificial dataset from Keenan Crane’s 3D repository \cite{3dmodel}. The dataset is made of 2930 points sampled from a cow-shaped surface homeomorphic to the two-dimensional sphere. The purpose of this subsection is to demonstrate that our kernel method can be applied to more complicated manifolds. To avoid an inverse crime  \cite{kaipio2006statistical}, we generate the truth using all 2930 points but solve the inverse problem on a subset of size $n=1000$. 

To be more precise, we generate $\kappa^{\dagger}$ from the Gaussian measure $\mathcal{N}(0,(\tau I+\Delta_{2930}))^{-s}$, where $\Delta_{2930}$ is the self-tuning graph Laplacian constructed with $k=100$ neighbors and $\tau=0.7$, $s=6$. We then set $u^{\dagger}$ to be $10(\phi_2-c)$, where $\phi_2$ is the second eigenvector of $\Delta_{2930}$ and $c$ is a constant chosen below. The factor 10 in the definition of $u$ is only to match the order of magnitude with $\kappa^{\dagger}$. We then  set $f=L_{\eps,2930}^{\kappa^{\dagger}} u^{\dagger}$. This would serve as our ground truth, and now we solve the inverse problem on a random subset $X$ of $n=1000$ points. \par
On the point cloud $X$, the truth becomes $\kappa^{\dagger}|_X$ and $u^{\dagger}|_X$. As mentioned above, the solution $u^{\dagger}|_X$ needs to have zero mean to be consistent with our theory. Since we do not have access to the Riemannian metric as in the previous two examples, we instead require $\langle u^{\dagger}|_X, 1 \rangle_q=0$, which gives the choice of $c$ above. Again we consider noisy pointwise observations at all 1000 points.  The inputs of the problem are now $f|_X$ and noisy $u^{\dagger}|_X$. The noise level is 10\%, which gives $\sigma=0.0186$. The prior that we use has the same parameter $\tau$ as used to obtain our synthetic truth, but is defined using a graph Laplacian  on $X$. Namely, we consider 
\begin{align*}
\pi_n=\mathcal{N}(0,(0.7I+\Delta_{1000})^{-6}),
\end{align*}
where $\Delta_{1000}$ is constructed with $k=80$ neighbors.
Figure \ref{figure:cow} shows the plots of the  posterior mean, the truth, the error, and the standard deviation. The reconstruction errors are $\|\bar{\kappa}-\kappa^{\dagger}|_X\|_2/\|\kappa^{\dagger}|_X\|_2=9.73\%$ and $\|\bar{u}-u^{\dagger}|_X\|_2/\|u^{\dagger}|_X\|_2=16.24\%$. We remark that the large relative error for $u^{\dagger}$ is partly due to the fact that the point cloud of size 1000 does not approximate the original one well and in particular 
\begin{align}
	L_{\eps,1000}^{\kappa^{\dagger}|_X} u^{\dagger}|_X \neq f|_X.	\label{eq:cow}
\end{align}
When we solve for $\hat{u}$ in equation \eqref{eq:cow}, i.e., solving  
\begin{align*}
	L_{\eps,1000}^{\kappa^{\dagger}|_X} \hat{u} = f|_X,
\end{align*}
we get $\|\hat{u}-u^{\dagger}|_X\|_2/\|u^{\dagger}|_X\|_2=17.01\%$ and this is the best one can hope for in terms of reconstructing $u^{\dagger}|_X$. Hence we see that the above relative error has already reached the limit of the method.

\subsection{Hierarchical Bayesian Formulation} \label{sec:HB}
As mentioned in Subsection \ref{sec:ctmprior}, the choice of $\tau$ is crucial and would require some prior knowledge of the length-scale of the function to be reconstructed. In this section, we demonstrate how one can learn the parameter $\tau$ together with $\kappa$ through a hierarchical Bayesian approach proposed in \cite{dunlop2017hierarchical}. We emphasize that the hierarchical approach may not be able to find the precise length-scale of the parameter to be reconstructed, and hence should only be applied when little prior knowledge on the length-scale is available. We will only focus on the point-cloud inverse problem as in Subsection \ref{ssec:pointcloudellipticip}. 

We remark that our choice of priors in \eqref{eq:ctmprior} and \eqref{eq:graphprior} differ from the ones used in \cite{dunlop2017hierarchical} by the scaling constants. In the continuum space, the familiy of measures defined by equation \eqref{eq:ctmprior} are mutually singular, which leads to technical difficulties when designing hierarchical methods. However, in the point cloud setting, the family of measures as in \eqref{eq:graphprior} are simply multivariate normal and are equivalent. The formulation in \cite{dunlop2017hierarchical} then carries over.
 
The idea is to consider a joint prior on $(\theta_n,\tau)$ that takes the form
\begin{align*}
	\Pi(\theta_n,\tau)=\pi_0(\tau) \pi(\theta_n|\tau):=\pi_0(\tau)\pi_{\tau}(\theta_n),
\end{align*} 
where $\pi_0$ is a distribution on $\mathbb{R}^+$ and the conditional distribution $\pi_{\tau}$ is taken as in \eqref{eq:graphprior}. Recall that $\pi_{\tau}$ has the form 
\begin{align*}
	\pi_{\tau} = \mathcal{N}(0,C_{\tau,s}^n), \quad C_{\tau,s}^n=c_n(\tau) (\tau I + \Delta_n)^{-s},
\end{align*} 
where $c_n(\tau)$ normalizes the draws so that $u\sim \pi_n$ satisfies $\mathbb{E}|u_n|^2=n$.  
Now we can define the forward map $F_n: \mathbb{R}^n \times \mathbb{R}^+ \rightarrow \mathbb{R}^n$ that associates a pair $(\theta_n,\tau)$ with the unique mean-zero solution $u_n$ of equation \eqref{eq:pdeM}. We notice that $F_n$ is essentially the same as before except the additional $\tau$ that does not play a role in the definition. Denoting $G_n=D_n \circ F_n$, the joint posterior $\Pi^y$ can be written as a change of measure with respect to the prior, 
\begin{align}
	\frac{d\Pi^y}{d\Pi}(\theta_n,\tau) \propto \exp\left(-\frac{1}{2} |y-G_n(\theta_n,\tau)|^2_{\Gamma}\right) .
\end{align}

\subsubsection{Sampling}
To sample from the joint posterior $\theta_n,\tau | y$, we will use a Metropolis within Gibbs sampling scheme by  updating  of $\theta_n|\tau,y$ and $\tau|\theta_n,y$ alternatingly. Sampling of $\theta_n|\tau,y$ reduces to the non-hierarchical setting, where we use the pCN algorithm. Sampling of $\tau|\theta_n,y$ is more delicate since one needs first to make sense of this conditional distribution. Instead of making the presentation too involved, we will only present the algorithm and refer to \cite{dunlop2017hierarchical} for more details. The idea is to use symmetric random-walk Metropolis-Hastings, with acceptance probability to accept the proposal $\tau$, given the current chain value $\gamma$ as,  
\begin{align}
a(\tau,\gamma) = \exp\left(-\frac{1}{2}\left[ H(\tau)-H(\gamma) \right] \right) \frac{\pi_0(\tau)}{\pi_0(\gamma)} \wedge 1, \label{eq:acceptP}
\end{align}
where, 
\begin{align*}
	H(\tau)=\sum_{i=1}^{n} \log \lambda_i(\tau)+ \frac{\langle  \theta, \phi_i \rangle^2}{\lambda_i(\tau)}.
\end{align*}
Here, $\{(\lambda_i(\tau),\phi_i)\}_{i=1}^{n}$ are the eigenvalue-eigenfunction pairs of $C_{\tau,s}^n$. 

From \eqref{eq:acceptP}, we see that the algorithm favors length-scales $\tau$'s that give small values of $H(\tau)$.  As $\tau$ approaches 0, the first eigenvalue of $(\tau I +\Delta_n)^{-s}$ goes to infinity and so the normalizing constant $c_n(\tau)$ approaches 0. However, the eigenvalues of $(\tau I +\Delta_n)^{-s}$, except the first one,  do not change much since $\tau $ is now a much smaller quantity. Hence when multiplied by $c_n(\tau)$, they converge to 0. In other words $\lambda_i(\tau)$ approaches 0 as $\tau $ goes to 0 for $i\geq 2$, which then implies that $\sum_{i=1}^{n} \log \lambda_i(\tau)\rightarrow -\infty$. So the first term in $H$ is minimized at $\tau=0$, while the second term $\langle  \theta, \phi_i \rangle^2/\lambda_i(\tau)$ favors large $\tau$ by a similar argument as above. Then the minimum of $H$ is attained by balancing the two sums, using the coefficients $\langle  \theta, \phi_i \rangle$. Hence the algorithm will give a $\tau$ that is consistent with these coefficients, reflecting the length-scale of $\theta$.

\subsubsection{Numerical Experiments}
\begin{figure}[!htb]
\minipage{1\textwidth}
\minipage{0.3333\textwidth}
  \includegraphics[width=\linewidth]{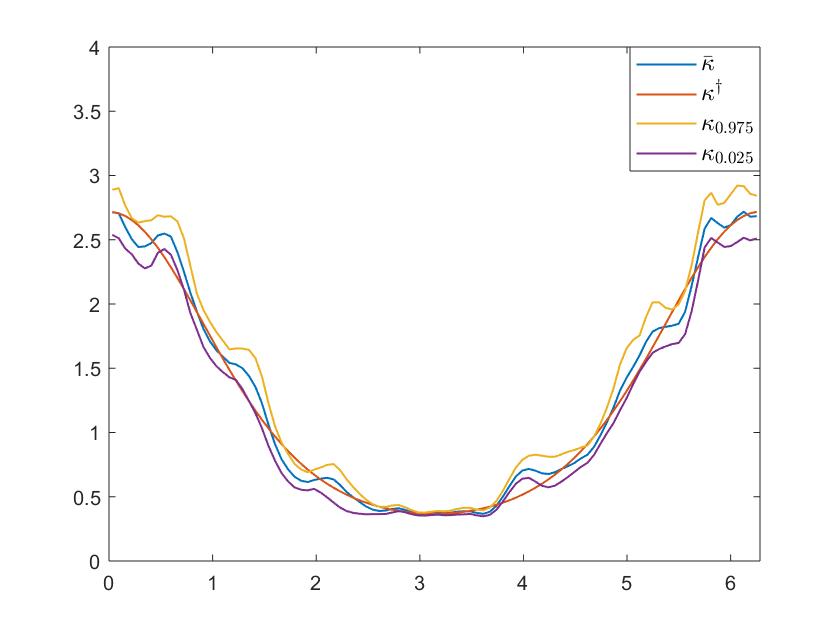}
\label{fig:awesome_image2}
\endminipage\hfill
\minipage{0.3333\textwidth}
  \includegraphics[width=\linewidth]{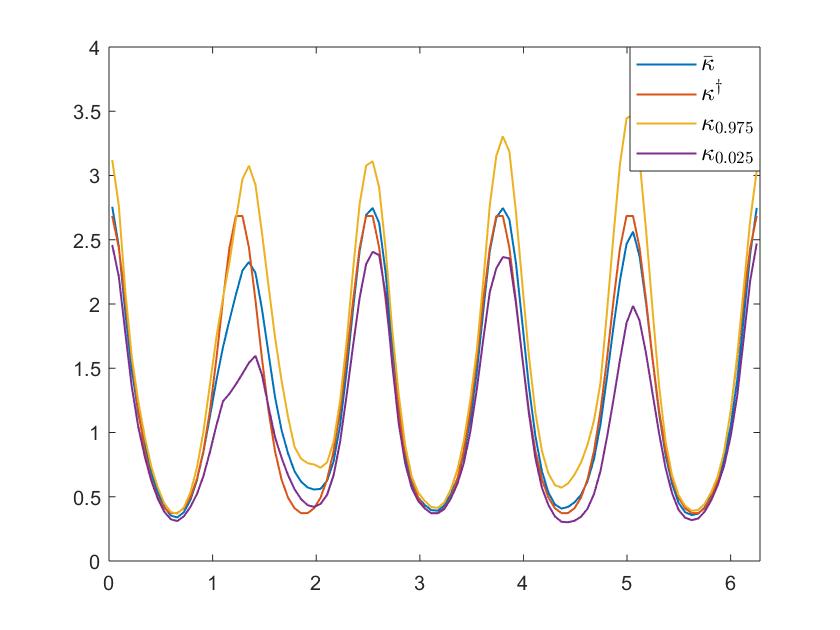}
\label{fig:awesome_image2}
\endminipage\hfill
\minipage{0.3333\textwidth}
  \includegraphics[width=\linewidth]{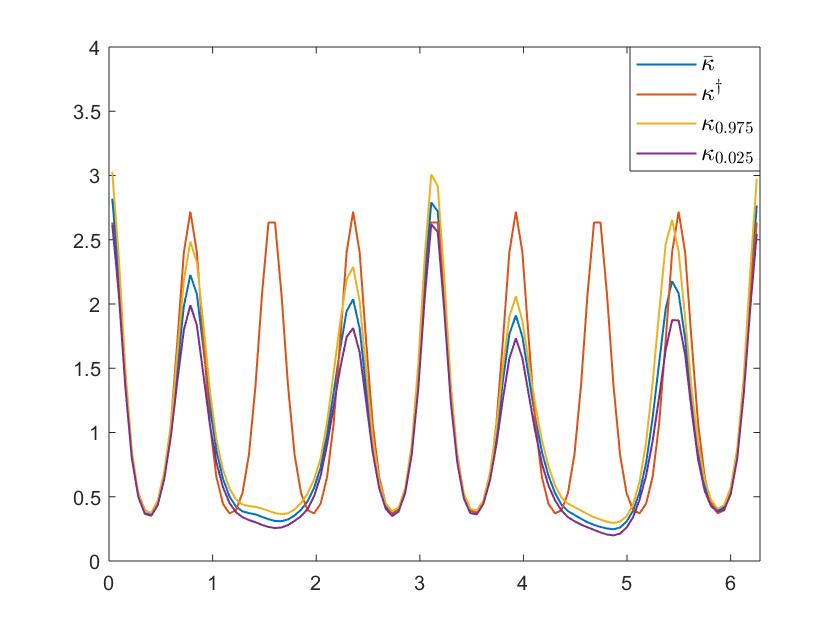}
\label{fig:awesome_image2}
\endminipage
\endminipage\hfill

\minipage{1\textwidth}
\minipage{0.3333\textwidth}
  \includegraphics[width=\linewidth]{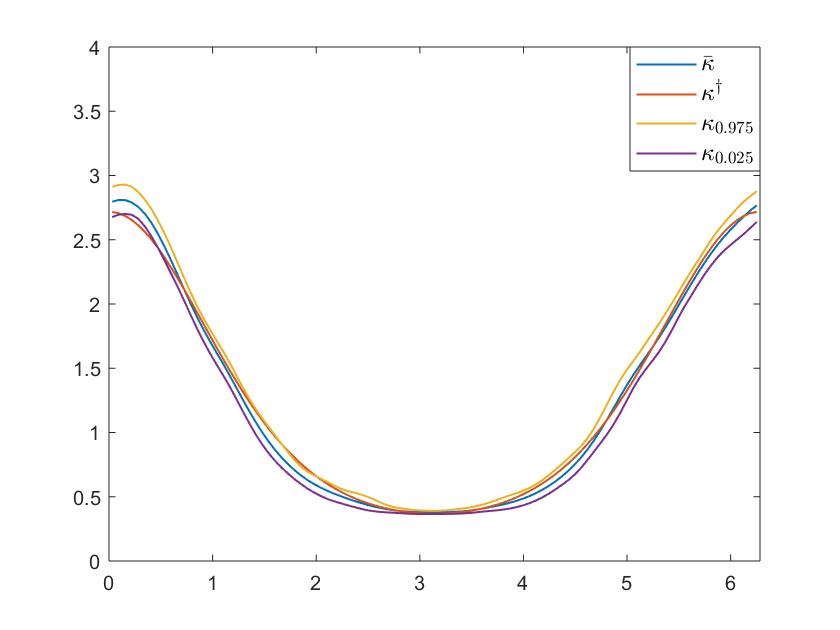}
\label{fig:awesome_image2}
\endminipage\hfill
\minipage{0.3333\textwidth}
  \includegraphics[width=\linewidth]{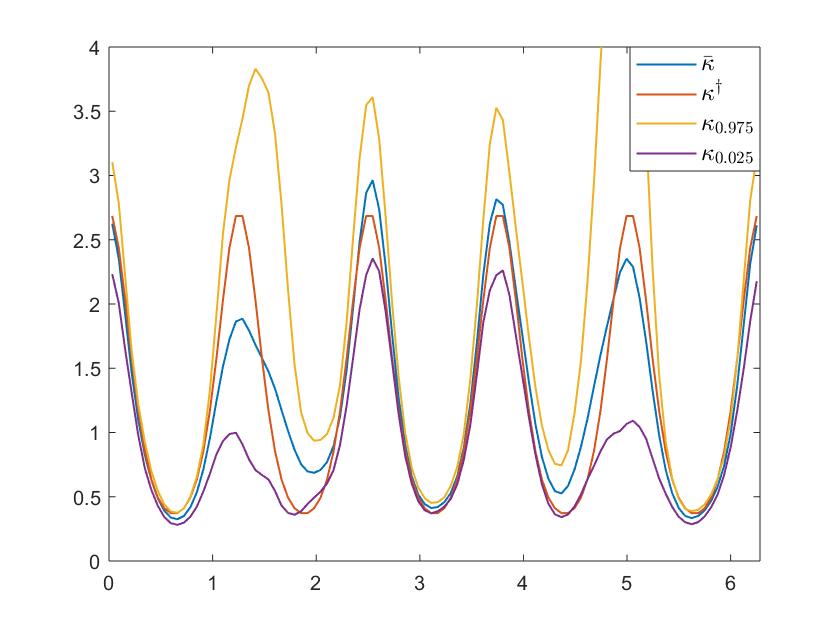}
\label{fig:awesome_image2}
\endminipage\hfill
\minipage{0.3333\textwidth}
  \includegraphics[width=\linewidth]{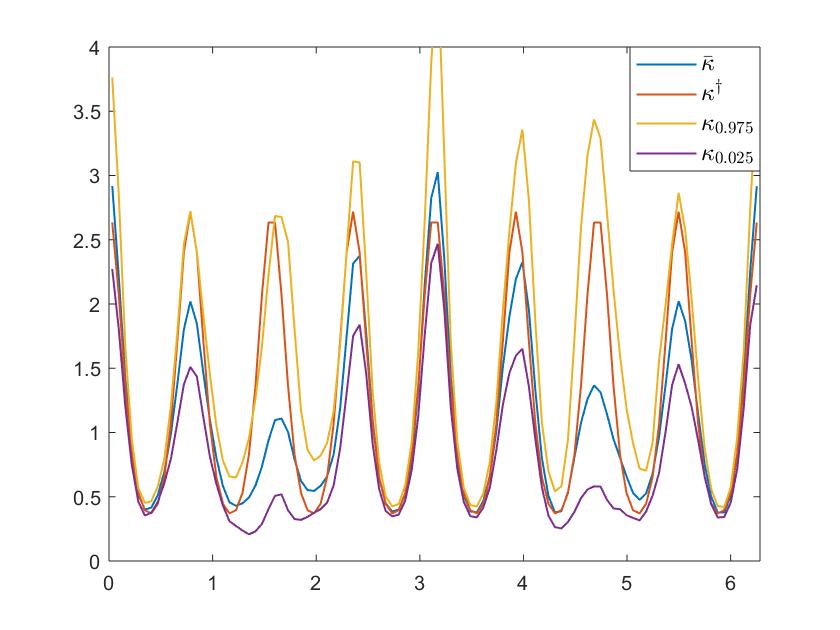}
\label{fig:awesome_image2}
\endminipage
\endminipage\hfill

\minipage{1\textwidth}
\minipage{0.3333\textwidth}
  \includegraphics[width=\linewidth]{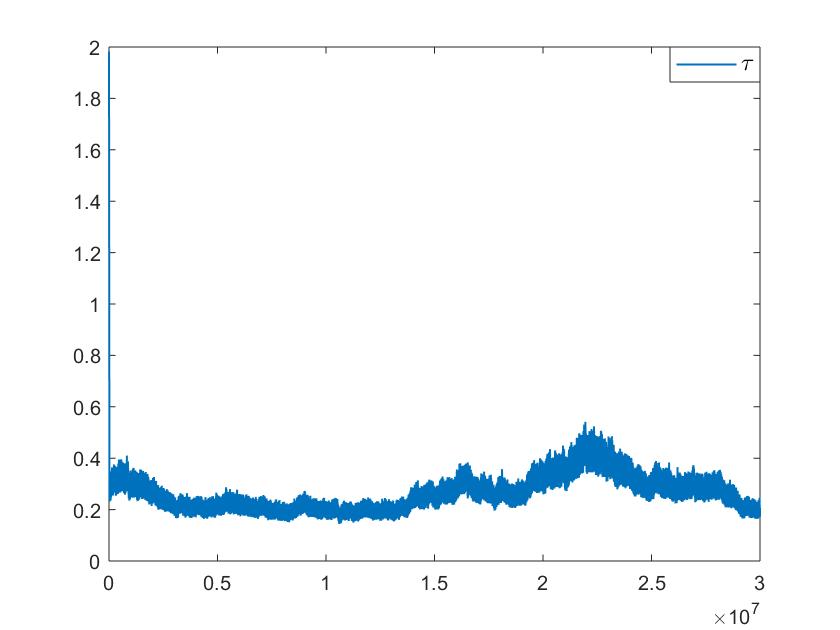}
\label{fig:awesome_image2}
\endminipage\hfill
\minipage{0.3333\textwidth}
  \includegraphics[width=\linewidth]{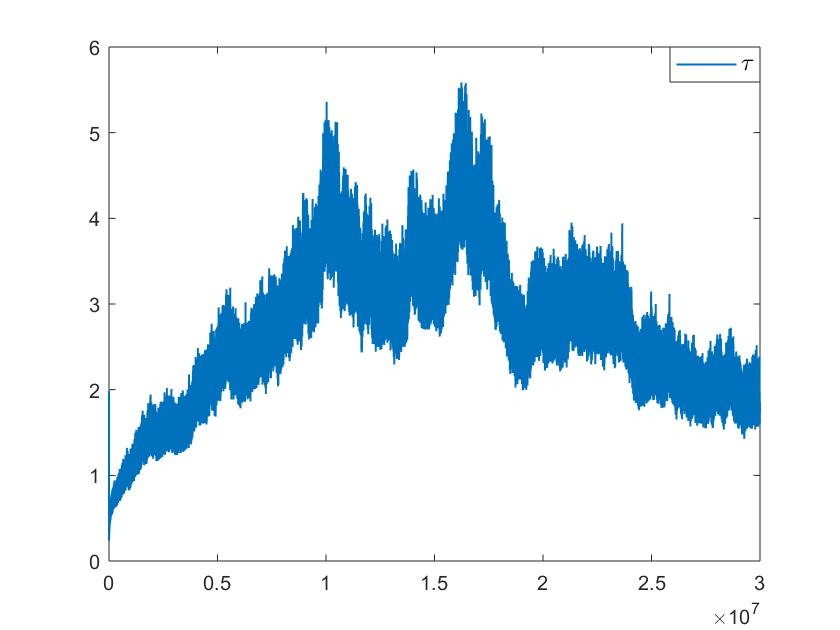}
\label{fig:awesome_image2}
\endminipage\hfill
\minipage{0.3333\textwidth}
  \includegraphics[width=\linewidth]{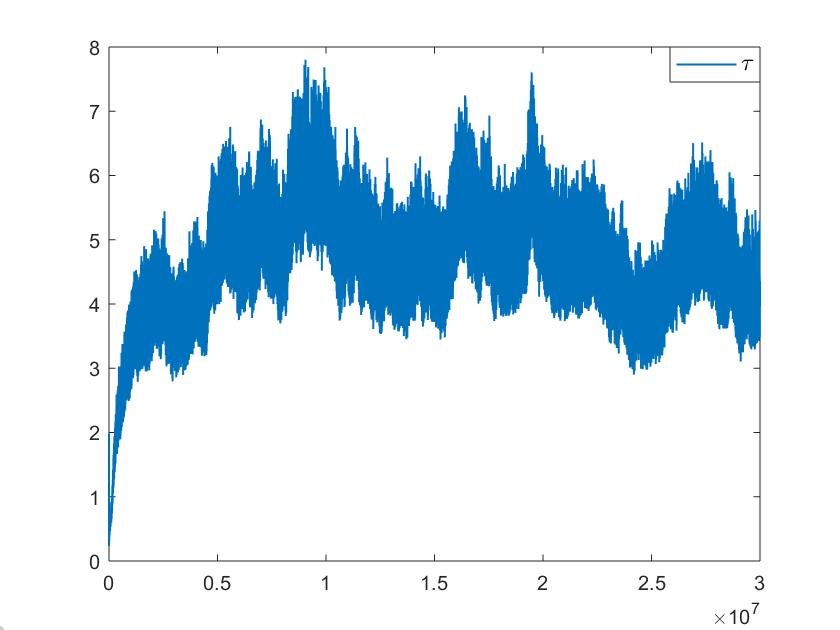}
\label{fig:awesome_image2}
\endminipage
\endminipage\hfill
\caption{Posterior means and 95\%  credible intervals for different truths. Figure are arranged so that the first two rows correspond to non-hierarchical and hierarchical respectively, and the third row shows the sample paths for $\tau$. The three columns represent the truths $e^{\cos(\omega)},e^{\cos(5\omega)},e^{\cos(8\omega)}$ respectively.  }
\label{figure:hier}
\end{figure}

To demonstrate the hierarchical approach, we focus on the ellipse case with three truths of different length-scales: $\kappa^{\dagger}=e^{\cos(\omega)}$, $e^{\cos(5\omega)}$, $e^{\cos(8\omega)}$ with a fixed $f=\frac{1}{5}\sin \omega$. We fix $f$  so that we are solving the same inverse problem, with different  underlying truth $\kappa^{\dagger}$'s. In this case we no longer have analytic solutions for $u$ and we use the MATLAB PDE toolbox . The point cloud is generated as in Subsection \ref{ssec:ex1} with $n=100$, with pointwise observations with noise level $\sigma=0.01$ at all points. We take $\pi_0=\mathcal{N}(2,1)$ and $s=4$. In the hierarchical setting, it takes longer for the chains to mix, where we run the chain for 3$\times 10^7$ iterations and use the last $5\times10^6$ samples for computations.  We compare the hierarchical approach with the non-hierarchical one, where we use a same $\tau$ to reconstruct the three $\kappa^{\dagger}$'s. Figure \ref{figure:hier} below shows the corresponding reconstructions. 

The first row corresponds to the non-hierarchical approach where we fix $\tau=2$, which is finely tuned to match the length-scale of $e^{\cos(5\omega)}$, for all three problems. The reconstruction is then acceptable for $e^{\cos(5\omega)}$ but is poorer for the other two. For $e^{\cos(\omega)}$, the reconstruction still fits the shape of the truth, but since the prior now has a length-scale much larger than that of $e^{\cos(\omega)}$, the reconstruction is oscillatory. On the other hand, the reconstruction of $e^{\cos(8\omega)}$ fails to  capture the shape of the truth. This is because the prior now has a smaller length-scale than that of $e^{\cos(8\omega)}$, so that frequencies high as $\cos(8\omega)$ barely belongs to the prior.  
The second and third row corresponds to the reconstructions of the hierarchical approach and the corresponding sample paths for $\tau$. We see that the credible intervals capture most of the truths and the reconstructions are much better for $e^{\cos(\omega)}$ and $e^{\cos(8\omega)}$ than with the non-hierarchical approach. Table \ref{table:hier} quantifies the reconstruction error. We notice that the hierarchical approach performs worse than the non-hierarchical one for $e^{\cos(5\theta)};$ this is because we have chosen $\tau=2$ agrees with the length-scale of the true diffusion coefficient. This fact suggests that the hierarchical approach only improves the performance when little prior knowledge on the length-scale is known. From the sample paths for $\tau$'s, we see that the chains have large variance and do not concentrate on a particular value. This is due to the ill-posedness of the inverse problem  where $\kappa$'s of different length-scales give equally good reconstruction of $u$, and hence the algorithm cannot distinguish between them. So in general the algorithm may not give the precise length-scale but a possible range of it.

\begin{table}[h!]
\centering
\begin{tabular}{ |c|c|c|c|c|c|c|c|c|c| } 
 \hline
\multicolumn{1}{|c|}{$\kappa^{\dagger}$} & \multicolumn{2}{|c|}{$e^{\cos(\omega)}$} &    \multicolumn{2}{|c|}{ $e^{\cos(5\omega)}$ }  &  \multicolumn{2}{|c|}{$e^{\cos(8\omega)}$} \\
\hline
Method & NH & H & NH & H & NH & H \\
\hline
 $\|\bar{\kappa}-\kappa^{\dagger}\|_{2}/\|\kappa^{\dagger}\|_2$ &9.07\% & 3.69\% &11.17\% & 17.41\% & 39.38\% & 28.82\%\\ 
 \hline
 $\|\bar{u}-u^{\dagger}\|_{2}/\|u^{\dagger}\|_2$ &0.84\%  &0.72\% &0.73\% & 0.69\% & 0.87\% & 0.83\% 
\\ \hline 
  \multicolumn{1}{|c|}{$\sqrt{n}\sigma/\|u^\dagger\|_2$} & \multicolumn{2}{c|}{ 1.87\%} &    \multicolumn{2}{c|}{1.26\%}  &  \multicolumn{2}{c|}{1.27\%}\\
  \hline 
\end{tabular}
\caption{Relative error of $\bar{\kappa}$ and $\bar{u}$ for different truths. Here "NH" and "H" stand for non-hierarchical and hierarchical respectively. In the last row, the relative noise level  for each $\sigma$ is reported for diagnostic purposes.  }
\label{table:hier}
\end{table}


\begin{remark} \label{rmk:largenoise}
Notice that in the above the noise level has been set to be small. When the noise level $\sigma$ is large, the performance of the hierarchical approach may be worse, as shown in Figure \ref{figure:hier2}.  The reason is that the algorithm sees only the noisy data, which is the truth $u^{\dagger}$ perturbed by noise. In other words, the length-scale of the data is corrupted by the noise, which has length-scale converging to 0 ($\tau \rightarrow \infty$) in the large $n$ and $J$ limit if the noise is independent, i.e., $\Gamma=I$. As shown in Figure \ref{figure:hier2}, the chain for $\tau$ oscillates in a wide range of values, suggesting that the data contains little information on this parameter.
\end{remark}

\begin{figure}[!htb]
\minipage{1\textwidth}
\centering
\minipage{0.3333\textwidth}
  \includegraphics[width=\linewidth]{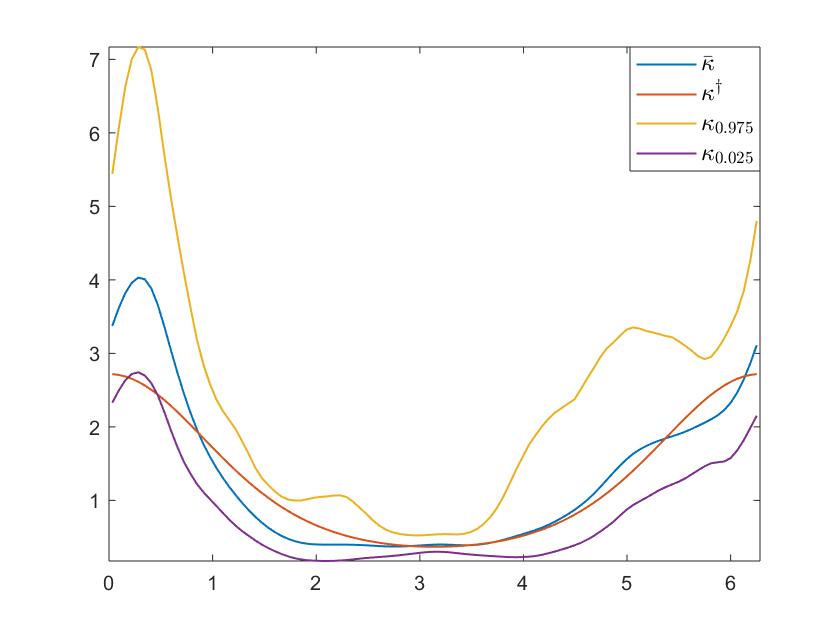}
\label{fig:awesome_image2}
\endminipage
\minipage{0.3333\textwidth}
  \includegraphics[width=\linewidth]{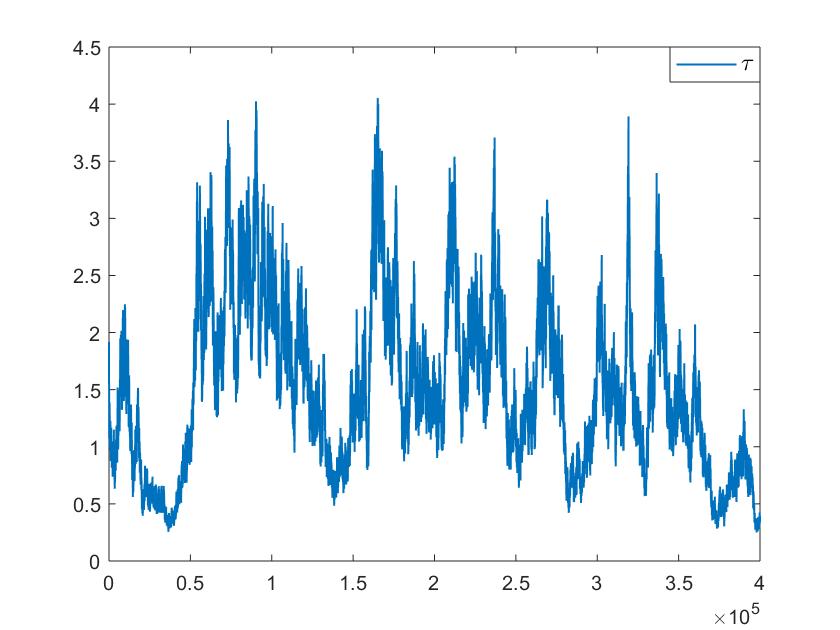}
\label{fig:awesome_image2}
\endminipage
\endminipage\hfill
\caption{Reconsturction of $e^{\cos(\omega)}$ and sample path for $\tau$ when $\sigma=0.1$.  }
\label{figure:hier2}
\end{figure}

\section{Conclusions and Future Work}\label{sec:conclusions}
\begin{itemize}
\item This paper introduced kernel-based methods for the solution of inverse problems on manifolds. We have shown through rigorous analysis that the forward map can be replaced by a kernel approximation while keeping small the total variation distance to the true posterior. Through numerical experiments we have shown that a point cloud discretization to the kernel approximation may allow to implement the inverse problem on point clouds, without reference to the underlying manifold. 
\item An important question of theoretical interest when solving the inverse problem on the point cloud is how to choose optimally the kernel bandwith $\eps$ in terms of the number $n$ of manifold samples. We conjecture that the convergence of the graph posteriors to the ground truth posterior, and guidelines on the choice of kernel bandwith, may be established by generalizing the spectral graph theory results in \cite{burago2013graph,trillos2019local} to anisotropic diffusions, and using the variational techniques introduced in \cite{garcia2018continuum,trillos2017consistency}. The analysis of these questions will be the subject of future work. 
\item We streamlined the presentation by working on a closed manifold with no boundary. We expect that the numerical and theoretical results may be extended to Neumann, Robin, and Dirichlet boundary conditions using the results and ideas in  \cite{gilani2019approximating,li2016convergent,shi2014convergence,thiede2019galerkin}.
\item The practical success of the Bayesian approach is heavily dependent on the choice of prior. Here, we have used Matern-type priors that are flexible models widely used in spatial statistics and the geophysical sciences  \cite{stein2012interpolation}.  While the hierarchical approach to the inverse problem \cite{dunlop2017hierarchical,geoga2019scalable} is effective for learning the prior length-scale from data in certain regimes as we have numerically shown, a more robust algorithm is needed and this merits an extensive further investigation.
\item A topic of further research will be the extension of the kernel-based approximation to PDEs and inverse problems to other PDEs and ODEs beyond the elliptic model considered here. 
\end{itemize}

  \section*{Acknowledgement}
  The research of JH was partially supported by the NSF Grants DMS-1619661 and DMS-1854299. The work of DSA was supported by the NSF Grant DMS-1912818/1912802.
    
\bibliographystyle{abbrvnat}

\bibliography{references}

\begin{thebibliography}{61}
\providecommand{\natexlab}[1]{#1}
\providecommand{\url}[1]{\texttt{#1}}
\expandafter\ifx\csname urlstyle\endcsname\relax
  \providecommand{\doi}[1]{doi: #1}\else
  \providecommand{\doi}{doi: \begingroup \urlstyle{rm}\Url}\fi

\bibitem[Agapiou et~al.(2017)Agapiou, Papaspiliopoulos, Sanz-Alonso, and
  Stuart]{agapiou2017importance}
S.~Agapiou, O.~Papaspiliopoulos, D.~Sanz-Alonso, and A.~M. Stuart.
\newblock Importance sampling: Intrinsic dimension and computational cost.
\newblock \emph{Statistical Science}, 32\penalty0 (3):\penalty0 405--431, 2017.

\bibitem[Babuska et~al.(2004)Babuska, Tempone, and
  Zouraris]{babuska2004galerkin}
I.~Babuska, R.~Tempone, and G.~E. Zouraris.
\newblock Galerkin finite element approximations of stochastic elliptic partial
  differential equations.
\newblock \emph{SIAM Journal on Numerical Analysis}, 42\penalty0 (2):\penalty0
  800--825, 2004.

\bibitem[Belkin and Niyogi(2004)]{belkin2004semi}
M.~Belkin and P.~Niyogi.
\newblock {Semi-supervised learning on Riemannian manifolds}.
\newblock \emph{Machine learning}, 56\penalty0 (1-3):\penalty0 209--239, 2004.

\bibitem[Belkin and Niyogi(2005)]{belkin2005towards}
M.~Belkin and P.~Niyogi.
\newblock {Towards a theoretical foundation for Laplacian-based manifold
  methods.}
\newblock In \emph{COLT}, volume 3559, pages 486--500. Springer, 2005.

\bibitem[Berry and Harlim(2016)]{berry2016variable}
T.~Berry and J.~Harlim.
\newblock Variable bandwidth diffusion kernels.
\newblock \emph{Applied and Computational Harmonic Analysis}, 40\penalty0
  (1):\penalty0 68--96, 2016.

\bibitem[Berry and Sauer(2016)]{bs:16}
T.~Berry and T.~Sauer.
\newblock Local kernels and the geometric structure of data.
\newblock \emph{Applied and Computational Harmonic Analysis}, 40\penalty0
  (3):\penalty0 439--469, 2016.

\bibitem[Bertalm{\i}o et~al.(2001)Bertalm{\i}o, Cheng, Osher, and
  Sapiro]{bertalmio2001variational}
M.~Bertalm{\i}o, L.-T. Cheng, S.~Osher, and G.~Sapiro.
\newblock Variational problems and partial differential equations on implicit
  surfaces.
\newblock \emph{Journal of Computational Physics}, 174\penalty0 (2):\penalty0
  759--780, 2001.

\bibitem[Bertozzi et~al.(2018)Bertozzi, Luo, Stuart, and
  Zygalakis]{bertozzi2018uncertainty}
A.~L. Bertozzi, X.~Luo, A.~M. Stuart, and K.~C. Zygalakis.
\newblock Uncertainty quantification in graph-based classification of high
  dimensional data.
\newblock \emph{SIAM/ASA Journal on Uncertainty Quantification}, 6\penalty0
  (2):\penalty0 568--595, 2018.

\bibitem[Bigoni et~al.(2019)Bigoni, Zahm, Spantini, and
  Marzouk]{bigoni2019greedy}
D.~Bigoni, O.~Zahm, A.~Spantini, and Y.~Marzouk.
\newblock Greedy inference with layers of lazy maps.
\newblock \emph{arXiv preprint arXiv:1906.00031}, 2019.

\bibitem[Bogachev(1998)]{bogachev1998gaussian}
V.~I. Bogachev.
\newblock \emph{Gaussian measures}.
\newblock Number~62. American Mathematical Soc., 1998.

\bibitem[Bonito et~al.(2016)Bonito, Casc{\'o}n, Mekchay, Morin, and
  Nochetto]{bonito2016high}
A.~Bonito, J.~M. Casc{\'o}n, K.~Mekchay, P.~Morin, and R.~H. Nochetto.
\newblock High-order afem for the laplace--beltrami operator: Convergence
  rates.
\newblock \emph{Foundations of Computational Mathematics}, 16\penalty0
  (6):\penalty0 1473--1539, 2016.

\bibitem[Burago et~al.(2013)Burago, Ivanov, and Kurylev]{burago2013graph}
D.~Burago, S.~Ivanov, and Y.~Kurylev.
\newblock {A graph discretization of the Laplace-Beltrami operator}.
\newblock \emph{arXiv preprint arXiv:1301.2222}, 2013.

\bibitem[Calvetti and Somersalo(2007)]{calvetti2007introduction}
D.~Calvetti and E.~Somersalo.
\newblock \emph{{An Introduction to Bayesian Scientific Computing: Ten Lectures
  on Subjective Computing}}, volume~2.
\newblock Springer Science \& Business Media, 2007.

\bibitem[Camacho and Demlow(2015)]{camacho}
F.~Camacho and A.~Demlow.
\newblock L2 and pointwise a posteriori error estimates for fem for elliptic
  pdes on surfaces.
\newblock \emph{IMA Journal of Numerical Analysis}, 35\penalty0 (3):\penalty0
  1199--1227, 2015.

\bibitem[Chaplain et~al.(2003)Chaplain, Ganesh, Graham, and
  Lolas]{chaplain2003mathematical}
M.~A. Chaplain, M.~Ganesh, I.~G. Graham, and G.~Lolas.
\newblock Mathematical modelling of solid tumour growth: applications of
  pre-pattern formation.
\newblock In \emph{Morphogenesis and Pattern Formation in Biological Systems},
  pages 283--293. Springer, 2003.

\bibitem[Cohen and Schwab(2011)]{cohen2011analytic}
R.~Cohen, A.and~Devore and C.~Schwab.
\newblock Analytic regularity and polynomial approximation of parametric and
  stochastic elliptic pdes.
\newblock \emph{Analysis and Applications}, 9\penalty0 (01):\penalty0 11--47,
  2011.

\bibitem[Coifman and Lafon(2006)]{coifman2006diffusion}
R.~R. Coifman and S.~Lafon.
\newblock Diffusion maps.
\newblock \emph{Applied and computational harmonic analysis}, 21\penalty0
  (1):\penalty0 5--30, 2006.

\bibitem[Coifman et~al.(2005)Coifman, Lafon, Lee, Maggioni, Nadler, Warner, and
  Zucker]{coifman2005geometric}
R.~R. Coifman, S.~Lafon, A.~B. Lee, M.~Maggioni, B.~Nadler, F.~Warner, and
  S.~W. Zucker.
\newblock {Geometric diffusions as a tool for harmonic analysis and structure
  definition of data: Diffusion maps}.
\newblock \emph{Proceedings of the National Academy of Sciences of the United
  States of America}, 102\penalty0 (21):\penalty0 7426--7431, 2005.

\bibitem[Coifman et~al.(2008)Coifman, Shkolnisky, Sigworth, and
  Singer]{coifman2008graph}
R.~R. Coifman, Y.~Shkolnisky, F.~J. Sigworth, and A.~Singer.
\newblock Graph laplacian tomography from unknown random projections.
\newblock \emph{IEEE Transactions on Image Processing}, 17\penalty0
  (10):\penalty0 1891--1899, 2008.

\bibitem[Cotter et~al.(2010{\natexlab{a}})Cotter, Dashti, and
  Stuart]{cotter2010approximation}
S.~Cotter, M.~Dashti, and A.~M. Stuart.
\newblock Approximation of bayesian inverse problems for pde's.
\newblock \emph{SIAM Journal on Numerical Analysis}, 48\penalty0 (1):\penalty0
  322--345, 2010{\natexlab{a}}.

\bibitem[Cotter et~al.(2010{\natexlab{b}})Cotter, Dashti, and
  Stuart]{cotter2013}
S.~Cotter, M.~Dashti, and A.~M. Stuart.
\newblock {MCMC} methods for functions: modifying old algorithms to make them
  faster.
\newblock \emph{SIAM Journal on Numerical Analysis}, 48\penalty0 (1):\penalty0
  322--345, 2010{\natexlab{b}}.

\bibitem[Crane()]{3dmodel}
K.~Crane.
\newblock Keenan’s 3d model repository.
\newblock URL \url{http://www.cs.cmu.edu/~kmcrane/Projects/ModelRepository.}

\bibitem[Dashti and Stuart(2017)]{dashti2013bayesian}
M.~Dashti and A.~M. Stuart.
\newblock Bayesian approach to inverse problems.
\newblock \emph{Handbook of Uncertainty Quantification}, pages 311--428, 2017.

\bibitem[Dunlop et~al.(2017)Dunlop, Iglesias, and
  Stuart]{dunlop2017hierarchical}
M.~M. Dunlop, M.~A. Iglesias, and A.~M. Stuart.
\newblock Hierarchical bayesian level set inversion.
\newblock \emph{Statistics and Computing}, 27\penalty0 (6):\penalty0
  1555--1584, 2017.

\bibitem[Dziuk and Elliott(2013)]{dziuk2013finite}
G.~Dziuk and C.~M. Elliott.
\newblock {Finite element methods for surface PDEs}.
\newblock \emph{Acta Numerica}, 22:\penalty0 289--396, 2013.

\bibitem[Eilks and Elliott(2008)]{eilks2008numerical}
C.~Eilks and C.~M. Elliott.
\newblock Numerical simulation of dealloying by surface dissolution via the
  evolving surface finite element method.
\newblock \emph{Journal of Computational Physics}, 227\penalty0 (23):\penalty0
  9727--9741, 2008.

\bibitem[Elliott and Stinner(2010)]{elliott2010surface}
C.~M. Elliott and B.~Stinner.
\newblock A surface phase field model for two-phase biological membranes.
\newblock \emph{SIAM Journal on Applied Mathematics}, 70\penalty0 (8):\penalty0
  2904--2928, 2010.

\bibitem[Fernique(1970)]{fernique1970integrabilite}
X.~Fernique.
\newblock Int{\'e}grabilit{\'e} des vecteurs gaussiens.
\newblock \emph{CR Acad. Sci. Paris Serie A}, 270:\penalty0 1698--1699, 1970.

\bibitem[Frauenfelder et~al.(2005)Frauenfelder, Schwab, and
  Todor]{frauenfelder2005finite}
P.~Frauenfelder, C.~Schwab, and R.~A. Todor.
\newblock Finite elements for elliptic problems with stochastic coefficients.
\newblock \emph{Computer methods in applied mechanics and engineering},
  194\penalty0 (2-5):\penalty0 205--228, 2005.

\bibitem[Gao et~al.(2019)Gao, Kovalsky, and Daubechies]{gao2019gaussian}
T.~Gao, S.~Z. Kovalsky, and I.~Daubechies.
\newblock Gaussian process landmarking on manifolds.
\newblock \emph{SIAM Journal on Mathematics of Data Science}, 1\penalty0
  (1):\penalty0 208--236, 2019.

\bibitem[Garcia~Trillos and Sanz-Alonso(2017)]{trillos2016bayesian}
N.~Garcia~Trillos and D.~Sanz-Alonso.
\newblock {The Bayesian formulation and well-posedness of fractional elliptic
  inverse problems}.
\newblock \emph{Inverse Problems}, 33\penalty0 (6):\penalty0 065006, 2017.

\bibitem[Garcia~Trillos and Sanz-Alonso(2018)]{garcia2018continuum}
N.~Garcia~Trillos and D.~Sanz-Alonso.
\newblock Continuum limits of posteriors in graph bayesian inverse problems.
\newblock \emph{SIAM Journal on Mathematical Analysis}, 50\penalty0
  (4):\penalty0 4020--4040, 2018.

\bibitem[Garcia~Trillos et~al.(2017)Garcia~Trillos, Kaplan, Samakhoana, and
  Sanz-Alonso]{trillos2017consistency}
N.~Garcia~Trillos, Z.~Kaplan, T.~Samakhoana, and D.~Sanz-Alonso.
\newblock {On the consistency of graph-based Bayesian learning and the
  scalability of sampling algorithms}.
\newblock \emph{arXiv preprint arXiv:1710.07702}, 2017.

\bibitem[Garcia~Trillos et~al.(2019)Garcia~Trillos, Sanz-Alonso, and
  Yang]{trillos2019local}
N.~Garcia~Trillos, D.~Sanz-Alonso, and R.~Yang.
\newblock Local regularization of noisy point clouds: Improved global geometric
  estimates and data analysis.
\newblock \emph{arXiv preprint arXiv:1904.03335}, 2019.

\bibitem[Geoga et~al.(2019)Geoga, Anitescu, and Stein]{geoga2019scalable}
C.~J. Geoga, M.~Anitescu, and M.~L. Stein.
\newblock {Scalable Gaussian process computations using hierarchical matrices}.
\newblock \emph{Journal of Computational and Graphical Statistics}, pages
  1--11, 2019.

\bibitem[Gilani and Harlim(2019)]{gilani2019approximating}
F.~Gilani and J.~Harlim.
\newblock {Approximating solutions of linear elliptic PDE's on a smooth
  manifold using local kernel}.
\newblock \emph{Journal of Computational Physics}, 395:\penalty0 563--582,
  2019.

\bibitem[Gilbarg and Trudinger(2015)]{gilbarg2015elliptic}
D.~Gilbarg and N.~S. Trudinger.
\newblock \emph{Elliptic Partial Differential Equations of Second Order}.
\newblock Springer, 2015.

\bibitem[Iglesias and Dawson(2007)]{iglesias2007representer}
M.~A. Iglesias and C.~Dawson.
\newblock The representer method for state and parameter estimation in
  single-phase darcy flow.
\newblock \emph{Computer Methods in Applied Mechanics and Engineering},
  196\penalty0 (45-48):\penalty0 4577--4596, 2007.

\bibitem[Kaipo and Somersalo(2006)]{kaipio2006statistical}
J.~Kaipo and E.~Somersalo.
\newblock {Statistical and Computational Inverse Problems}.
\newblock \emph{{Springer Science \& Business Media}}, 160, 2006.

\bibitem[Li and Shi(2016)]{li2016convergent}
Z.~Li and Z.~Shi.
\newblock A convergent point integral method for isotropic elliptic equations
  on a point cloud.
\newblock \emph{Multiscale Modeling \& Simulation}, 14\penalty0 (2):\penalty0
  874--905, 2016.

\bibitem[Li et~al.(2017)Li, Shi, and Sun]{li2017point}
Z.~Li, Z.~Shi, and J.~Sun.
\newblock Point integral method for solving poisson-type equations on manifolds
  from point clouds with convergence guarantees.
\newblock \emph{Communications in Computational Physics}, 22\penalty0
  (1):\penalty0 228--258, 2017.

\bibitem[Lindgren et~al.(2011)Lindgren, Rue, and
  Lindstr{\"o}m]{lindgren2011explicit}
F.~Lindgren, H.~Rue, and J.~Lindstr{\"o}m.
\newblock An explicit link between gaussian fields and gaussian markov random
  fields: the stochastic partial differential equation approach.
\newblock \emph{Journal of the Royal Statistical Society: Series B (Statistical
  Methodology)}, 73\penalty0 (4):\penalty0 423--498, 2011.

\bibitem[Lorentzen et~al.(2012)Lorentzen, Flornes, N{\ae}vdal,
  et~al.]{lorentzen2012history}
R.~J. Lorentzen, K.~M. Flornes, G.~N{\ae}vdal, et~al.
\newblock History matching channelized reservoirs using the ensemble kalman
  filter.
\newblock \emph{SPE Journal}, 17\penalty0 (01):\penalty0 137--151, 2012.

\bibitem[Marzouk and Xiu(2009)]{marzouk2009stochastic}
Y.~Marzouk and D.~Xiu.
\newblock {A stochastic collocation approach to Bayesian inference in inverse
  problems}.
\newblock \emph{{Communications in Computational Physics}}, 6\penalty0
  (4):\penalty0 826--847, 2009.

\bibitem[McLaughlin and Townley(1996)]{mclaughlin1996reassessment}
D.~McLaughlin and L.~R. Townley.
\newblock A reassessment of the groundwater inverse problem.
\newblock \emph{Water Resources Research}, 32\penalty0 (5):\penalty0
  1131--1161, 1996.

\bibitem[M\'emoli et~al.(2004)M\'emoli, Sapiro, and
  Thompson]{memoli2004implicit}
F.~M\'emoli, G.~Sapiro, and P.~Thompson.
\newblock Implicit brain imaging.
\newblock \emph{NeuroImage}, 23:\penalty0 S179--S188, 2004.

\bibitem[Mueller and Siltanen(2012)]{mueller2012linear}
J.~L. Mueller and S.~Siltanen.
\newblock \emph{Linear and nonlinear inverse problems with practical
  applications}, volume~10.
\newblock Siam, 2012.

\bibitem[Ping et~al.(2014)Ping, Zhang, et~al.]{ping2014history}
J.~Ping, D.~Zhang, et~al.
\newblock History matching of channelized reservoirs with vector-based
  level-set parameterization.
\newblock \emph{Spe Journal}, 19\penalty0 (03):\penalty0 514--529, 2014.

\bibitem[Piret(2012)]{piret2012orthogonal}
C.~Piret.
\newblock The orthogonal gradients method: A radial basis functions method for
  solving partial differential equations on arbitrary surfaces.
\newblock \emph{Journal of Computational Physics}, 231\penalty0 (14):\penalty0
  4662--4675, 2012.

\bibitem[Ruuth and Merriman(2008)]{ruuth2008simple}
S.~J. Ruuth and B.~Merriman.
\newblock A simple embedding method for solving partial differential equations
  on surfaces.
\newblock \emph{Journal of Computational Physics}, 227\penalty0 (3):\penalty0
  1943--1961, 2008.

\bibitem[Sanz-Alonso(2018)]{sanz2018importance}
D.~Sanz-Alonso.
\newblock Importance sampling and necessary sample size: An information theory
  approach.
\newblock \emph{SIAM/ASA Journal on Uncertainty Quantification}, 6\penalty0
  (2):\penalty0 867--879, 2018.

\bibitem[Sanz-Alonso et~al.(2018)Sanz-Alonso, Stuart, and Taeb]{sanzstuarttaeb}
D.~Sanz-Alonso, A.~M. Stuart, and A.~Taeb.
\newblock Inverse problems and data assimilation.
\newblock \emph{arXiv preprint arXiv:1810.06191}, 2018.

\bibitem[Shi and Sun(2014)]{shi2014convergence}
Z.~Shi and J.~Sun.
\newblock Convergence of the point integral method for poisson equation on
  point cloud.
\newblock \emph{arXiv preprint arXiv:1403.2141}, 2014.

\bibitem[Stein(2012)]{stein2012interpolation}
M.~L. Stein.
\newblock \emph{Interpolation of spatial data: some theory for kriging}.
\newblock Springer Science \& Business Media, 2012.

\bibitem[Stuart(2010)]{AS10}
A.~M. Stuart.
\newblock Inverse problems: a {B}ayesian perspective.
\newblock \emph{Acta Numerica}, 19:\penalty0 451--559, 2010.

\bibitem[Thiede et~al.(2019)Thiede, Giannakis, Dinner, and
  Weare]{thiede2019galerkin}
E.~H. Thiede, D.~Giannakis, A.~R. Dinner, and J.~Weare.
\newblock Galerkin approximation of dynamical quantities using trajectory data.
\newblock \emph{The Journal of Chemical Physics}, 150\penalty0 (24):\penalty0
  244111, 2019.

\bibitem[Tsybakov(2008)]{tsybakov2008introduction}
A.~Tsybakov.
\newblock \emph{Introduction to Nonparametric Estimation}.
\newblock Springer Series in Statistics. Springer New York, 2008.
\newblock ISBN 9780387790527.
\newblock URL \url{https://books.google.com/books?id=mwB8rUBsbqoC}.

\bibitem[Von~Luxburg(2007)]{von2007tutorial}
U.~Von~Luxburg.
\newblock A tutorial on spectral clustering.
\newblock \emph{Statistics and computing}, 17\penalty0 (4):\penalty0 395--416,
  2007.

\bibitem[Wasserman(2006)]{wasserman2006all}
L.~Wasserman.
\newblock \emph{{All of Nonparametric Statistics}}.
\newblock Springer Science \& Business Media, 2006.

\bibitem[Xu et~al.(2006)Xu, Li, Lowengrub, and Zhao]{xu2006level}
J.-J. Xu, Z.~Li, J.~Lowengrub, and H.~Zhao.
\newblock A level-set method for interfacial flows with surfactant.
\newblock \emph{Journal of Computational Physics}, 212\penalty0 (2):\penalty0
  590--616, 2006.

\bibitem[Zelnik-Manor and Perona(2005)]{zelnik2005self}
L.~Zelnik-Manor and P.~Perona.
\newblock Self-tuning spectral clustering.
\newblock In \emph{Advances in neural information processing systems}, pages
  1601--1608, 2005.

\end{thebibliography}
\appendix
\section{Proofs of Lemmas}
\subsection*{Proof of Lemma \ref{lem:stability}}
The proof proceeds by a standard Lax-Milgram argument. Throughout, $C>0$ denotes a constant independent of $\eps$ and $\kappa$ that may change from line to line.
 Consider the bilinear and linear functionals
\begin{align*}
B: L_0^2 \times L_0^2  &\to \R, \quad  &F: L_0^2 &\to \R, \\
 (u,v) & \mapsto \langle u, \L_\eps^\kappa v \rangle, & v &\mapsto \langle v, f \rangle.
\end{align*}
Clearly, $B$ and $F$ are bounded. To show that $B$ is coercive, note that 
 by \cite{shi2014convergence}[Theorem 7.2] there exists $C>0$ such that, for all $\eps>0$   $v \in L_0^2(\mathcal{M}),$ 
\begin{equation}\label{eq:contraction}
    \langle v,\L_{\eps} v\rangle \geq C \|v\|_{L^2}, \quad 
\end{equation}
where 
\begin{align*}
    \L_\eps v :=\frac{1}{\sqrt{4\pi}\eps^{\frac{m}{2}+1}} \int  \exp\left(-\frac{|x-\tx|^2}{4\eps}\right) [v(x)-v(\tx)] dV(\tx).
\end{align*}
It follows that, for $v \in L_0^2,$ 
\begin{align*}
\langle v, \L^\kappa_{\eps} v \rangle &= \frac{1}{\sqrt{4\pi}\eps^{\frac{m}{2}+1}} \int \exp\left(-\frac{|x-\tx|^2}{4\eps}\right) \sqrt{\kappa(x)\kappa(\tx)} v(x)[v(x)-v(\tx)] dV(\tx) dV(x) \\
&=\frac{1}{\sqrt{4\pi}\eps^{\frac{m}{2}+1}} \int \exp\left(-\frac{|x-\tx|^2}{4\eps}\right) \sqrt{\kappa(x)\kappa(\tx)} v(\tx)[v(\tx)-v(x)] dV(\tx) dV(x) \\
&= \frac{1}{2\sqrt{4\pi}\eps^{\frac{m}{2}+1}} \int \exp\left(-\frac{|x-\tx|^2}{4\eps}\right) \sqrt{\kappa(x)\kappa(\tx)}|v(x)-v(\tx)|^2 dV(\tx) dV(x) \\
&\geq \kmin \frac{1}{2\sqrt{4\pi}\eps^{\frac{m}{2}+1}} \int \exp\left(-\frac{|x-\tx|^2}{4\eps}\right) |v(x)-v(\tx)|^2 dV(\tx) dV(x)\\
&= \kmin \langle v, \L_\eps v \rangle  \geq C \kmin \|v\|^2_{L^2},
\end{align*}
establishing the coercivity of $B.$
The existence and uniqueness of a weak solution, as well as the bound \eqref{eq:boundueps} follow from the Lax Milgram theorem.

\subsection*{Proof of Lemma \ref{lem:operator}}
Our proof follows the same argument as \cite{coifman2006diffusion}[Lemma 8] but keeps track of the coefficients of the higher order terms.  
Let 
\begin{align*}
    G_{\eps}u(x)=\eps^{-\frac{m}{2}} \int h\left(\frac{|x-\tx|^2}{\eps} \right) u(\tx) dV(\tx),\quad h(z)=\frac{1}{\sqrt{4\pi}} e^{-\frac{z}{4}}. 
\end{align*}
Let $0<\beta<\frac{1}{2}.$ We can localize the integration near $x$ due to the exponential decay of $e^{-x^2}$: 
\begin{align*}
    \left|\eps^{-\frac{m}{2}} \int _{\tx\in \mathcal{M}: |\tx-x|>\eps^{\beta}}\exp\left(-\frac{|x-\tx|^2}{4\eps} \right) u(\tx) dV(\tx)  \right|
    &\leq  \eps^{-\frac{m}{4}}\|u\|_{L^2} \sqrt{\eps^{-\frac{m}{2}}  \int _{\tx\in \mathcal{M}: |\tx-x|>\eps^{\beta}}\exp\left(-\frac{|x-\tx|^2}{2\eps} \right)dV(\tx)} \\
    & \leq \eps^{-\frac{m}{4}}\|u\|_{L^2} \sqrt{\int_{x+\sqrt{\eps}\tx \in \mathcal{M}: |\tx|>\eps^{\beta-1/2}  }  \exp \left( -\frac{1}{2}|\tx|^2 \right) dV(\tx)} \\
    & \leq \eps^{-\frac{m}{4}}\|u\|_{L^2} \sqrt{\mathbb{P}\{ \mathcal{N}(0,1) >\eps^{\beta-1/2}  \} } \\
    & \leq \eps^{-\frac{m}{4}}\|u\|_{L^2} \exp\left( -c\eps^{2\beta-1} \right) 
    \leq C\|u\|_{L^2} \eps^{2},
\end{align*}
where in the last inequality since $2\beta<1$, $\exp(-c\eps^{2\beta-1} )$ decays faster than any polynomial in $\eps$ and in particular for $\eps$ small enough it decays faster than $\eps^{2+\frac{m}{4}}$. Therefore, 
\begin{align*}
    G_{\eps}u(x) =\eps^{-\frac{m}{2}} \int_{\tx\in \mathcal{M}: |\tx-x|<\eps^{\beta}} h\left(\frac{|x-\tx|^2}{\eps}\right) u(\tx) dV(\tx) +O(\|u\|_{L^2}\eps^{2}).
\end{align*}
Now we Taylor expand $u$ near $x$. Let $(s_1,\ldots,s_m)$ be the geodesic coordinates at $x$ and $u(\tx)=u(\tx(s_1,\ldots,s_m))=\Tilde{u}(s_1,\ldots,s_m)=\Tilde{u}(s)$. Then 
\begin{align*}
    u(\tx)-u(x)=\Tilde{u}(s)-\Tilde{u}(0)&=\sum_{i=1}^m s_i \frac{\partial \Tilde{u}}{\partial s_i}(0)+\frac{1}{2}\sum_{i=1}^m\sum_{j=1}^m s_is_j \frac{\partial^2 \Tilde{u}}{\partial s_i \partial s_j}(0) 
+ \frac{1}{6} \sum_{i=1}^m\sum_{j=1}^m\sum_{k=1}^m s_is_js_k \frac{\partial^3 \tilde{u}}{\partial s_i \partial s_j \partial s_k}(0) +\delta(s),
\end{align*}
where 
\begin{align*}
        \delta (s)=\frac{1}{24} \int_0^1 \int_0^1 \int_0^1 \int_0^1 \sum_{i=1}^m\sum_{j=1}^m\sum_{k=1}^m\sum_{\ell=1}^m s_is_js_ks_{\ell}\frac{\partial^4 \Tilde{u}}{\partial s_i \partial s_j \partial s_k \partial s_{\ell}}(t_1t_2t_3t_4 s ) dt_1 dt_2dt_3dt_4.
\end{align*}
Then expanding the 4-$th$ order term in $G_{\eps}u$, we have 
\begin{align*}
    |T_u(x)|:=&\left|\eps^{-\frac{m}{2}} \int_{|s|<\eps^{\beta}}  h\left(\frac{|s|^2}{\eps}\right) \delta(s) ds \right| \\
    &\leq \frac{m^4\eps^{4\beta}}{24} \int _0^1\int_0^1\int_0^1\int_0^1 \int_{|s|<\eps^{\beta}} \eps^{-\frac{m}{2}} h\left(\frac{|s^2|}{\eps}\right) \|\nabla^4\Tilde{u}(t_1t_2t_3t_4s)\| ds dt_1dt_2dt_3dt_4\\
    &=\frac{m^4\eps^{4\beta}}{24}  \int _0^1\int_0^1\int_0^1\int_0^1 \int_{|r|<t_1t_2t_3\eps^{\beta}} (t_1t_2t_3t_4)^{-d} \eps^{-\frac{d}{2}} h\left(\frac{|r|^2}{t_1^2t_2^2t_3^2t_4^2\eps}\right)\|\nabla^4\Tilde{u}(r)\| drdt_1dt_2dt_3dt_4\\
    &:=\frac{m^4\eps^{4\beta}}{24}\int _0^1\int_0^1\int_0^1\int_0 \int_{|r|<t_1t_2t_3t_4\eps^{\beta}} K(r) \|\nabla^4\Tilde{u}(r)\| drdt_1dt_2dt_3dt_4.
\end{align*}
By interchanging the order of integration and noticing that $\nabla^4\Tilde{u}(r)=\nabla^4u(x+\xi)$ where $\xi$'s  are directional vectors that are independent of $x$, we have
\begin{align*}
    \|T_u\|^2_{L^2}&\leq \left(\frac{m^4\eps^{4\beta}}{24}\right)^2 \int_{\mathcal{M}} \int_0^1\int_0^1\int_0^1\int_0^1 \left[ \int_{|r|<t_1t_2t_3t_4\eps^{\beta}}  K(r) \|\nabla^4\Tilde{u}(r)\| dr  \right] ^2 dt_1dt_2dt_3dt_4dV(x)\\
    &=\left(\frac{m^4\eps^{4\beta}}{24}\right)^2 \int_{\mathcal{M}} \int_0^1\int_0^1\int_0^1\int_0^1 \left[\int_{|r|<t_1t_2t_3t_4\eps^{\beta}} K(r)dr\right] \left[\int_{|r|<t_1t_2t_3t_4\eps^{\beta}}K(r) \|\nabla^4\Tilde{u}(r)\|^2 dr   \right] dt_1dt_2dt_3dt_4 dV(x)\\
    &=\left(\frac{m^4\eps^{4\beta}}{24}\right)^2 
    \int_0^1\int_0^1\int_0^1\int_0^1\left[\int_{|r|<t_1t_2t_3t_4\eps^{\beta}} K(r)dr\right]  \left[\int_{|r|<t_1t_2t_3t_4\eps^{\beta}}K(r) \int_{\mathcal{M}} \|\nabla^4 u(x+\xi)\|^2  dV(x) dr\right] dt_1dt_2dt_3dt_4 \\
    &=\left(\frac{m^4\eps^{4\beta}}{24}\right)^2 \|\nabla^4 u\|^2_{L^2} \int_0^1\int_0^1\int_0^1\int_0^1\left[\int_{|r|<t_1t_2t_3t_4\eps^{\beta}} K(r)dr\right] ^2dt_1dt_2dt_3dt_4.
\end{align*}
Since the normalizing constant of $K$ is of the right order, its integral can be bounded by a constant that does not depend on $\eps$. So 
\begin{align}
    \|T_u\|_{L^2} \leq C\|\nabla^4 u\|_{L^2} \eps^{4\beta}, \label{eq:boundTL2}
\end{align}
where $C$ is a constant that does not depend on $u$ or $\eps$. 
Now following the same argument as in \cite{coifman2006diffusion} and keeping track of the derivatives of $u$, we have 
\begin{align*}
    G_{\eps}u(x)=u(x)+\eps \left[\omega(x)u(x)+\Delta u(x)\right] +R_u(x), 
\end{align*}
where 
\begin{align*}
    R_u(x)=T_u(x)+O\left(\Big[\|u\|_{L^2}+\|\nabla u(x)\|+\|\nabla^2 u(x)\|+\|\nabla^3u(x)\|\Big] \eps^{2}\right).
\end{align*}
Applying $G_{\eps}$ to $u\sqrt{\kappa}$, we have
\begin{align}
G_{\eps}(u\sqrt{\kappa}) =u\sqrt{\kappa}+ \eps \left[ \omega  u\sqrt{\kappa} + \Delta(u \sqrt{\kappa})\right] + R_{u\sqrt{\kappa}}. \label{eq:Guk}
\end{align}
By expanding the derivatives of $u\sqrt{\kappa}$ and bounding them by the $\infty$-norms of $\kappa$ and its derivatives, we have
\begin{align*}
R_{u\sqrt{\kappa}}(x)=T_{u\sqrt{\kappa}}(x) +O\left(\|\sqrt{\kappa}\|_{\mathcal{C}^4}\Big[ \|u\|_{L^2}+ |u(x)|+\|\nabla u(x)\|+\|\nabla^2 u(x)\|+\|\nabla^3u(x)\| \Big] \eps^2 \right),
\end{align*}
where 
\begin{align*}
\|T_{u\sqrt{\kappa}}\|_{L^2} \leq C\|\sqrt{\kappa}\|_{\mathcal{C}^4}\|\nabla^4 u\|_{L^2} \eps^{4\beta}. 
\end{align*}
By setting $u=1$, we have
\begin{align}
G_{\eps}\sqrt{\kappa}=\sqrt{\kappa} +\eps \left[\omega \sqrt{\kappa} +\Delta\sqrt{\kappa}\right]
+O\left(\|\sqrt{\kappa}\|_{\mathcal{C}^4} \eps^2 \right). \label{eq:Gk}
\end{align}
By combining \eqref{eq:Guk} and \eqref{eq:Gk}, we have
\begin{align*}
\L^\kappa_{\eps}u =\frac{\sqrt{\kappa}}{\eps} \left[uG_{\eps}\sqrt{\kappa}-G_{\eps}(u\sqrt{\kappa}) \right] = \L^\kappa u+O\left(\|\sqrt{\kappa}\|_{\mathcal{C}^4} \eps \right) +\frac{R_{u\sqrt{\kappa}}}{\eps}. 
\end{align*}
Hence it follows that 
\begin{align*}
\|(\L^\kappa_{\eps}-\L^\kappa)u\|_{L^2}\leq C (1 \vee \|u\|_{H^4})\|\sqrt{\kappa}\|_{\mathcal{C}^4}\eps^{4\beta-1}. 
\end{align*}

\subsection*{Proof of Lemma \ref{lem:H4norm}}
First we multiply the equation by $u$ and integrate over $\mathcal{M}$. Integrating by parts, we get 
\begin{align*}
    \int f u = -\int \text{div} (\kappa \nabla u) u =\int \kappa |\nabla u|^2 \geq \kmin \|\nabla u\|^2_{L^2}. 
\end{align*}
By H\"older and Poincar\'e inequalities, there is a constant $C$ that depends only on $\mathcal{M}$ so that 
\begin{align}
    \|\nabla u\|_{L^2} & \leq C \kmin^{-1} \|f\|_{L^2} \leq C \kmin^{-1} \|f\|_{H^3}, \label{eq:deri1}  \\
    \|u\|_{L^2} & \leq C \kmin^{-1} \|f\|_{L^2} \leq C\kmin^{-1} \|f\|_{H^3}. \label{eq:deri0}
\end{align}
Now differentiating the equation with respect to $x_k$ and testing against $u_{x_k}$, we get 
\begin{align*}
    \int f_{x_k} u_{x_k} = -\int \text{div} \left( \frac{\partial }{\partial x_k }(\kappa \nabla u) \right) u_{x_k} 
   = \int \frac{\partial}{ \partial x_k} (\kappa\nabla u) \cdot \nabla u_{x_k} 
    = \int \kappa_{x_k} \nabla u \cdot \nabla u_{x_k} + \kappa \nabla u_{x_k} \cdot \nabla u_{x_k} .
\end{align*}
Using Cauchy's inequality with $\eps$ that $|ab|\leq \eps a^2+\frac{1}{4\eps}b^2 $  we get 
\begin{align*}
    \int f_{x_k}u_{x_k} &\geq -\|\kappa_{x_k}\|_{\infty} \left( \eps \|\nabla u_{x_k}\|^2_{L^2} +\frac{1}{4\eps} \|\nabla u\|^2_{L^2} \right)+\kmin \|\nabla u_{x_k}\|^2_{L^2}\\
    & =\left( \kmin -\eps \|\kappa_{x_k}\|_{\infty}\right) \|\nabla u_{x_k}\|^2_{L^2} -\frac{\|\kappa_{x_k}\|_{\infty}}{4\eps} \|\nabla u\|^2_{L^2}.
\end{align*}
Choosing $\eps=\frac{\kmin}{2(\|\kappa_{x_k}\|_{\infty}+1)}$ and rearranging terms, we get 
\begin{align*}
     \|\nabla u_{x_k}\|^2_{L^2} 
     &\leq \kmin^{-2} \|\kappa_{x_k}\|_{\infty}\big( \|\kappa_{x_k}\|_{\infty}+1 \big) \|\nabla u\|^2_{L^2} +2\kmin^{-1}\int f_{x_k}u_{x_k} \\
     &\leq \kmin^{-2} \|\kappa_{x_k}\|_{\infty}\big( \|\kappa_{x_k}\|_{\infty}+1 \big) \|\nabla u\|^2_{L^2} + \kmin^{-1}\|f_{x_k}\|_{L^2}^2 + \kmin^{-1} \|u_{x_k}\|_{L^2}^2.
\end{align*}
Then we have
\begin{align}
    \|\nabla^2 u\|^2_{L^2} &=\sum_{k=1}^m \|\nabla u_{x_k}\|^2_{L^2} \leq m\kmin^{-2}\big(\|\nabla \kappa\|_{\infty}^2+\|\nabla \kappa\|_{\infty} \big) \|\nabla u\|_{L^2}^2 + \kmin^{-1}\|\nabla f\|_{L^2}^2 + \kmin^{-1} \|\nabla u\|_{L^2}^2\nonumber\\
& \leq C \|f\|^2_{H^3}\left[\kmin^{-3}  + \kmin^{-4} \big(\|\kappa\|_{\mathcal{C}^3}^2+\|\kappa\|_{\mathcal{C}^3}\big)  \right], \label{eq:deri2}
\end{align}
where we have used \eqref{eq:deri1} and $C$ only depends on $\mathcal{M}$. 
Now we bound the norm of the third derivatives by further differentiating the equation with respect to $x_j$ and integrating against $u_{x_kx_j}$. We have again by Cauchy's inequality
\begin{align*}
    \int f_{x_kx_j}u_{x_kx_j}=&-\int \text{div}\left(  \frac{\partial^2}{\partial x_j \partial x_k}(\kappa \nabla u)\right) u_{x_kx_j} \\
    =&\int \frac{\partial^2}{\partial x_j \partial x_k}(\kappa \nabla u)\cdot \nabla u_{x_kx_j}\\
    =& \int \left[ \kappa_{x_kx_j} \nabla u+ \kappa_{x_k} \nabla u_{x_j} +\kappa_{x_j} \nabla u_{x_k}+ \kappa \nabla u_{x_kx_j}  \right]\cdot \nabla u_{x_kx_j} \\
     \geq&   \,
    \kmin \|\nabla u_{x_kx_j}\|^2_{L^2}
    -\|\kappa_{x_kx_j}\|_{\infty}\left( \eps_1 \|\nabla u_{x_kx_j}\|^2_{L^2} +\frac{1}{4\eps_1}\|\nabla  u\|^2_{L^2}\right) \\
    &- \|\kappa_{x_k}\|_{\infty}\left(\eps_2 \|\nabla u_{x_kx_j}\|^2_{L^2} +\frac{1}{4\eps_2} \|\nabla u_{x_j}\|^2_{L^2} \right) 
    - \|\kappa_{x_j}\|_{\infty}\left( \eps_3 \|\nabla u_{x_kx_j}\|^2_{L^2}+\frac{1}{4\eps_3}\|\nabla u_{x_k}\|^2_{L^2} \right)  \\
    =& \,\|\nabla u_{x_kx_j}\|_{L^2}^2 \left(\kmin -\eps_1 \|\kappa_{x_kx_j}\|_{\infty} -\eps_2\|\kappa_{x_k}\|_{\infty}-\eps_3\|\kappa_{x_j}\|_{\infty} \right) \\
     &-\frac{1}{4\eps_1}\|\kappa_{x_kx_j}\|_{\infty} \|\nabla u\|_{L^2}^2 -\frac{1}{4\eps_2} \|\kappa_{x_k}\|_{\infty} \|\nabla u_{x_j}\|_{L^2}^2 -\frac{1}{4\eps_3} \|\kappa_{x_j}\|_{\infty} \|\nabla u_{x_k}\|_{L^2}^2     .
\end{align*}
Now choosing 
\begin{align*}
    \eps_1 =\frac{\kmin}{4(\|\kappa_{x_kx_j}\|_{\infty}+1)}, \quad  
    \eps_2 =\frac{\kmin}{4(\|\kappa_{x_k}\|_{\infty}+1)}, \quad
    \eps_3 =\frac{\kmin}{4(\|\kappa_{x_j}\|_{\infty}+1)}
\end{align*}
and rearranging terms, we get 
\begin{align*}
    \|\nabla u_{x_kx_j}\|^2_{L^2} & \leq  
    2 \kmin^{-1} \left( \|f_{x_kx_j}\|^2_{L^2}+\|u_{x_kx_j}\|^2_{L^2} \right)+4\kmin^{-2} \|\kappa_{x_kx_j}\|_{\infty}\big( \|\kappa_{x_kx_j}\|_{\infty} +1 \big) \|\nabla u\|^2_{L^2} \nonumber  \\
    &+ 4\kmin^{-2} \|\kappa_{x_k}\|_{\infty}\big(\|\kappa_{x_k}\|_{\infty}+1\big) \|\nabla u_{x_j}\|^2_{L^2}  
    +4\kmin^{-2} \|\kappa_{x_j}\|_{\infty}\big(\|\kappa_{x_j}\|_{\infty}+1\big) \|\nabla u_{x_k}\|^2_{L^2}.  
\end{align*}
Then we have 
\begin{align}
    \|\nabla^3 u\|^2_{L^2} =\sum_{j=1}^m \sum_{k=1}^m \|\nabla u_{x_kx_j}\|^2_{L^2}
    &\leq    2\kmin^{-1}\left(\|\nabla^2 f\|^2_{L^2} +\|\nabla^2 u\|^2_{L^2} \right) +4m^2   \kmin^{-2}\big( \|\nabla^2 \kappa\|_{\infty}^2 +\|\nabla^2\kappa\|_{\infty}\big) \|\nabla u\|^2_{L^2} \nonumber\\
    &+ 8m\kmin^{-2}\big( \|\nabla \kappa\|_{\infty}^2+\|\nabla \kappa\|_{\infty}  \big) \|\nabla^2 u\|^2_{L^2} \nonumber \\
   &\leq C\left[  \kmin^{-4}  + \kmin^{-5}\big(\|\kappa\|^2_{\mathcal{C}^3}+\|\kappa\|_{\mathcal{C}^3} \big)+ \kmin^{-6}\big(\|\kappa\|^2_{\mathcal{C}^3}+\|\kappa\|_{\mathcal{C}^3} \big)^2 \right] .  \label{eq:deri3}
\end{align}
Differentiating further and applying a similar argument, gives that
\begin{align}
    \|\nabla^4 u\|^2_{L^2} &\leq  4 \kmin^{-1} \big(\|\nabla^3 f\|^2_{L^2}+\|\nabla^3u\|^2_{L^2}\big) +16m^3 \kmin^{-2} \|\nabla u\|^2_{L^2} \big(\|\nabla^3 \kappa\|^2_{\infty}+\|\nabla^3 \kappa\|_{\infty}\big) \nonumber \\
 &+ 48m^2  \kmin^{-2} \|\nabla^2 u\|^2_{L^2}\big(\|\nabla^2 \kappa\|^2_{\infty}+\|\nabla^2 \kappa\|_{\infty}\big) +48m \kmin^{-2} \|\nabla^3 u\|^2_{L^2} \big(\|\nabla \kappa\|^2_{\infty}+\|\nabla \kappa\|_{\infty}\big) \nonumber\\
&\leq C\|f\|^2_{H^3}\Big[  \kmin^{-5} + \kmin^{-6}\big(\|\kappa\|^2_{\mathcal{C}^3}+\|\kappa\|_{\mathcal{C}^3} \big) 
+ \kmin^{-7}\big(\|\kappa\|^2_{\mathcal{C}^3}+\|\kappa\|_{\mathcal{C}^3} \big)^2+\kmin^{-8}\big(\|\kappa\|^2_{\mathcal{C}^3}+\|\kappa\|_{\mathcal{C}^3} \big)^3\Big]. \label{eq:deri4}
\end{align}
The desired result follows by combining equations \eqref{eq:deri0}, \eqref{eq:deri1}, \eqref{eq:deri2}, \eqref{eq:deri3} and \eqref{eq:deri4}.

\subsection*{Proof of Lemma \ref{lem:Z}}
By Lipschitz continuity of $e^{-x}$ when $x>0$, we have,
\begin{align*}
&\left|\exp\left( -\frac{1}{2} |y-\mathcal{G}_{\eps}(\theta)|_{\Gamma}^2 \right)- \exp\left(-\frac{1}{2} |y-\mathcal{G}(\theta)|^2_{\Gamma}\right) \right| \\
&\leq \left| \frac{1}{2} |y-\mathcal{G}_{\eps}(\theta)|_{\Gamma}^2 -\frac{1}{2} |y-\mathcal{G}(\theta)|^2_{\Gamma} \right|\\
&= \frac{1}{2} \left| (\mathcal{G}(\theta)^T+\mathcal{G}_{\eps}(\theta)^T)\Gamma^{-1}(\mathcal{G}(\theta)-\mathcal{G}_{\eps}(\theta)) + 2y^T \Gamma^{-1} (\mathcal{G}(\theta)-\mathcal{G}_{\eps}(\theta))\right| \\
&\leq \frac{1}{2} \|\Gamma^{-1}\|_2 \Big(|\mathcal{G}(\theta)|+|\mathcal{G}_{\eps}(\theta)|\Big)|\mathcal{G}(\theta)-\mathcal{G}_{\eps}(\theta)|+\|\Gamma^{-1}\|_2|y||\mathcal{G}(\theta)-\mathcal{G}_{\eps}(\theta)|,
\end{align*}
where $\|\Gamma^{-1}\|_2$ is the operator 2-norm of $\Gamma^{-1}$.
By Theorem \ref{thm:1}, Lemma \ref{lem:stability} and \eqref{eq:deri0},
\begin{align*}
	|\mathcal{G}(\theta) -\mathcal{G}_{\eps} (\theta)| &\leq \sqrt{\sum \|\ell_j\|^2} \|u-u_{\eps}\|_{L^2} \leq C\sqrt{\sum \|\ell_j\|^2} A(\theta)\|f\|_{H^3} \eps^{4\beta-1} \\
	|\mathcal{G}(\theta)| +|\mathcal{G}_{\eps} (\theta)| &\leq \sqrt{\sum \|\ell_j\|^2}\big(\|u\|_{L^2}+\|u_{\eps}\|_{L^2}\big)\leq C\sqrt{\sum \|\ell_j\|^2} e^{\|\theta\|_{\infty}} \|f\|_{L^2},
\end{align*}
where $u$ and $u_{\eps}$ are the zero-mean solutions associated with  $\theta$, and $\|\ell_j\|$ is the operator norm of $\ell_j$. Here we have written $A$ as a function of $\theta$ and use the fact that $\kappa_{\text{min}}\geq e^{-\|\theta\|_{\infty}}$. 
So 
\begin{align*}
&\int \left| \exp\left( -\frac{1}{2} |y-\mathcal{G}_{\eps}(\theta)|_{\Gamma}^2 \right)- \exp\left(-\frac{1}{2} |y-\mathcal{G}(\theta)|^2_{\Gamma}\right)\right|  d\pi(\theta)   \\
&\leq C \|\Gamma^{-1}\|_2 \eps^{4\beta-1}\left[ \sum \|\ell_j\|^2 \|f\|_{H^3}^2  \int e^{\|\theta\|_{\infty}}A(\theta) d\pi(\theta) + \sqrt{\sum \|\ell_j\|^2} \|y\|\|f\|_{H^3} \int A(\theta) d\pi(\theta)  \right].
\end{align*}
It now suffices to show $\int \left( e^{\|\theta\|_{\infty}} \vee 1\right) A(\theta) d\pi(\theta)<\infty$. Since $\kappa=e^{\theta}$, we have 
\begin{align*}
    \|\kappa\|_{C^4} \leq C e^{\|\theta\|_{\infty}} \left(\|\theta\|_{\mathcal{C}^4}+\|\theta\|_{\mathcal{C}^4}^2+\|\theta\|_{\mathcal{C}^4}^3+\|\theta\|_{\mathcal{C}^4}^4\right), 
\end{align*}
where $C$ is a constant depending on the dimension $m$. Keeping only the highest order term in $e^{\|\theta\|_{\infty}}$, we have 
\begin{align*}
	A(\theta)& \leq C \left[1 \vee \sqrt{P_1(\|\theta\|_{\mathcal{C}^4}) e^{14 \|\theta\|_{\infty}}}  \right] P_2(\|\theta\|_{\mathcal{C}^4}) e^{\|\theta\|_{\infty}}
& \leq C \left[1 \vee \sqrt{P_1(\|\theta\|_{\mathcal{C}^4}) e^{14 \|\theta\|_{\mathcal{C}^4}}}  \right] P_2(\|\theta\|_{\mathcal{C}^4}) e^{\|\theta\|_{\mathcal{C}^4}}, 
\end{align*}
where $P_1$ and $P_2$  are polynomials. Since $\pi$ is a Gaussian measure on $\mathcal{C}^4$, by Fernique's theorem, 
\begin{align*}
	\int \left( e^{\|\theta\|_{\infty}} \vee 1\right) A(\theta) d\pi(\theta)<\infty.
\end{align*}
It follows that 
\begin{align*}
	\int \left|\exp\left( -\frac{1}{2} |y-\mathcal{G}_{\eps}(\theta)|_{\Gamma}^2 \right)- \exp\left(-\frac{1}{2} |y-\mathcal{G}(\theta)|^2_{\Gamma}\right) \right| d\pi(\theta) \leq C\eps^{4\beta-1},
\end{align*}
where $C$ depends on $\Gamma, y, f$ and the $\ell_j$'s, but is independent of $\eps$.

\end{document}